
\documentclass[11pt,letterpaper,reqno]{amsart}

\usepackage{amsmath,amsthm,amssymb,amsfonts,epsfig,color,graphicx,enumerate,enumitem,accents}
\usepackage{rotating,multirow,color,enumerate,url,mathrsfs,mathrsfs}
\usepackage{arydshln}
\usepackage{bbm}
\usepackage{cases}
\usepackage{esint}
\usepackage{gensymb}
\usepackage{epstopdf}
\usepackage{comment}
\usepackage{leftidx}
\usepackage{graphicx}
\usepackage{todonotes}
\usepackage[normalem]{ulem}

\usepackage{mathtools}
\usepackage{enumerate}
\usepackage{subfig}
\usepackage{amsaddr}
\tolerance=1000
\usepackage[rightcaption]{sidecap}

\usepackage[utf8]{inputenc}
\usepackage[T1]{fontenc}

\usepackage{scalerel}
\usepackage{bbm}
\usepackage{tikz}
\usepackage{IEEEtrantools}
\usepackage{thmtools}
\usepackage{thm-restate}



\newtheorem{theorem}{Theorem}[section]
\newtheorem{corollary}[theorem]{Corollary}
\newtheorem{lemma}[theorem]{Lemma}
\newtheorem{proposition}[theorem]{Proposition}

\newtheorem{prob}[theorem]{Problem}



\theoremstyle{definition}
\newtheorem{definition}[theorem]{Definition}
\newtheorem{remark}[theorem]{Remark}
\newtheorem{example}[theorem]{Example}



\numberwithin{equation}{section}


%
\captionsetup[subfigure]{labelfont=rm}



\newcommand{\med}{\medskip\noindent}
\newcommand{\e}{\varepsilon}

\def\f{\varphi}

\newcommand{\R}{\mathbb{R}}

\newcommand{\Rd}{\R^d}

\newcommand{\ident}{{\mathrm{id}}}
\newcommand{\Ident}{{\mathrm{Id}}}

\newcommand{\ov}{\overline}

\newcommand{\A}{\mathcal{A}}
\newcommand{\B}{\mathcal{B}}
\newcommand{\K}{\mathcal{K}}
\newcommand{\LL}{\mathcal{L}}
\newcommand{\JJ}{\mathcal{J}}
\newcommand{\FF}{\mathcal{F}}
\newcommand{\var}{{\rm{var}}}

\newcommand{\weakstar}{{\overset{\ast}{\rightharpoonup}}}

\newcommand \res{\mathop{\hbox{\vrule height 7pt width .5pt depth 0pt
			\vrule height .5pt width 6pt depth 0pt}}\nolimits}

\newcommand{\opnorm}[1]{{\left\vert\kern-0.25ex\left\vert\kern-0.25ex\left\vert #1 
		\right\vert\kern-0.25ex\right\vert\kern-0.25ex\right\vert}}

\def\O{\Omega}
\newcommand{\Ob}{{\overline{\Omega}}}
\newcommand{\bO}{{\partial\Omega}}

\newcommand{\argu}{{\,\cdot\,}}

\newcommand{\Mes}{{\mathcal{M}}}

\newcommand{\Sddp}{{\mathcal{S}^{d\times d}_+}}

\newcommand{\dive}{{\mathrm{div}}}

\newcommand{\tr}{{\mathrm{Tr}}}

\newcommand{\PP}{\mathcal{P}}

\newcommand{\pairing}[1]{{\left \langle #1 \right \rangle}}

\newcommand{\abs}[1]{{\left \lvert #1 \right \rvert}}
\newcommand{\Lip}{{\mathrm{lip}}}

\newcommand{\eps}{\varepsilon}

\newcommand{\I}{\mathcal{I}}
\newcommand{\G}{\mathcal{G}}
\newcommand{\D}{\mathcal{D}}
\newcommand{\C}{{C}}

\newcommand{\V}{{\mathcal{V}}}

\newcommand{\Sf}{{{S}_f}}

\newcommand{\Ha}{\mathcal{H}}

\newcommand{\Sdd}{{\mathcal{S}^{d \times d}}}

\newcommand{\mres}{\mathbin{\vrule height 1.6ex depth 0pt width
		0.13ex\vrule height 0.13ex depth 0pt width 1.3ex}}

\newcommand{\one}{{{\bf 1}
		\kern-0,28em \rm l}}

\DeclareMathOperator{\spt}{sp}


\begin{document}

	\title{Kantorovich-Rubinstein duality theory for the Hessian}

	

	\author{Karol Bo{\l}botowski \and Guy Bouchitt\'{e}}


	\date{\today}
	
	\begin{abstract}  
		
    The classical Kantorovich-Rubinstein duality theorem establishes a significant connection between Monge optimal transport and maximization of a linear form on the set of 1-Lipschitz functions. This result has been widely used in various research areas. In particular, it unlocks the optimal transport methods in some of the optimal design problems. This paper puts forth a similar theory when the linear form is maximized over $C^{1,1}$ functions whose Hessian lies between minus and plus identity matrix. The problem will be identified as the dual of a specific optimal transport formulation that involves three-point plans. The first two marginals are fixed, while the third must dominate the other two in the sense of convex order. The existence of optimal plans allows to express solutions of the underlying Beckmann problem as a combination of rank-one tensor measures supported on a graph. In the context of two-dimensional mechanics, this graph encodes the optimal configuration of a grillage that transfers a given load system.
\end{abstract}

\maketitle

\noindent \textbf{Keywords:}  {Hessian-constrained problem, Monge optimal transport,  tensor-valued measures,
 second-order Beckmann problem, convex order, stochastic dominance, optimal grillage.}

\noindent \textbf{2020 Mathematics Subject Classification:}  49J45, 49K20, 28A50, 60E15, 74P05
	
	\dedicatory{}
	
	
	\maketitle

	\vskip1cm
	
	
\section{introduction}

The classical Kantorovich-Rubinstein duality theorem plays a fundamental role in the Monge optimal transport theory \cite{santambrogio2015,villani2003}.
In the Euclidean framework, it states that,  
for given probability measures $\mu, \nu$ on $\R^d$ with finite first-order moments, the $L^1$ Monge-Kantorovich distance,
$$ W_1(\mu,\nu) = \inf \left\{\iint |x-y|\, \gamma(dxdy) \ :\  \gamma\in \Gamma(\mu,\nu) \right\}, $$
coincides with the maximum in the following  linear programming problem,
\begin{equation}\label{firstorder} {\I_1(f)}= \sup\left\{  {\int u \, df }\ :\  u\in C^{0,1}(\R^d),\ \  \Lip(u)\le 1\right\} ,
\end{equation}
{for the signed measure $f=\nu -\mu$.} Above  $\Gamma(\mu,\nu)$ stands for the set of probability measures on
$\R^d\times \R^d$ having the first and second marginal $\mu$ and $\nu$, respectively. 

The equality $W_1(\mu,\nu)= {\I_1(f)}$ is a key ingredient for deriving a
  PDE approach to the optimal transport problem,  see e.g. \cite{BBS1997,evans1999}.  It also allows to interpret the Monge distance as the total variation of a vector measure $\sigma \in \Mes(\R^d;\R^d)$ which solves the so-called
Beckmann problem,
\begin{equation}
	\label{first-order_Beckmann}
	\min \left\{ \int |\sigma| \ :\  {-\dive\, \sigma= f}  \ \ \text{in } \D'(\Rd) \right\}.
\end{equation} 
The geometric insights into the optimal transport interpretation are
 very meaningful. Using the notion of transport rays or, more generally, geodesics, it is possible to  recast the solutions to the Beckmann problem from the optimal transport plans $\gamma$ via the decomposition formula,
\begin{equation} \label{truss-measure} \sigma = \iint \lambda^{x,y} \, \gamma(dxdy), \qquad  \lambda^{x,y}:=\frac{y-x}{\abs{y-x}}  \, \Ha^1 \mres [x,y].\end{equation}
 In particular, it follows that every optimal measure $\sigma$ is supported on the convex (geodesic) hull of the support of {$f$}.
 
In the late 1990s, a bridge between Monge optimal transport and optimal design was proposed in \cite{BBS1997}. It was found that an optimal measure $\sigma$ for \eqref{first-order_Beckmann} represents the heat flow in a conductor to be optimally designed for a given source $f$. The optimal transport approach can be applied accordingly to derive the best distribution of the conductive material. In particular, this approach covers the case of concentrated source terms $f$, which cannot be treated  by the classical PDE methods.

Later on, the paper \cite{bouchitte2001} has shown that a larger class of optimal design problems can be tackled by solving
a Beckmann-type problem, see also the recent work \cite{bolbotowski2022}. This includes compliance minimization of elastic bodies where $\sigma$ is tensor-valued, while the potential $u$ is vectorial, and they represent stress and displacement, respectively.
 However, the optimal transport strategy does not automatically extend to these cases. Note that  similar issues concern the auction design problem in mathematical finance \cite{kolesnikov2022}. The single-bidder formulation, also known as the monopolist problem, admits both Beckmann and optimal transport reformulations. For the extension to multiple bidders, only the Beckmann interpretation remains.

In view of the important underlying applications, there is an urge to study possible extensions of the Kantorovich-Rubinstein duality principle. The question could be formulated as follows.
 Let $A$ be a linear differential operator on smooth vector-valued functions {$u:\O \subset\R^d \to \R^n$ such that $Au:\Omega \to \R^n \times \R^d$}, and for $\varrho$ take a semi-norm on real $n \times d$ matrices. Is there an optimal transport formulation that we can employ to address the following maximization problem,
\begin{equation} \label{LCP}
\sup\Big\{ \pairing{u,f} :\ u\in C^\infty(\R^d; \R^n),\ \ \varrho(Au)\le 1 \ \text{in $\O$}\Big\} ,
\end{equation}
 where $\O$ is a domain in $\R^d$, and $f$ is a suitable source term supported  on $\Ob$\nolinebreak\ ?  By standard  convex duality, the above supremum  can be written as a minimum in the following Beckmann-type problem,
 \begin{equation}\label{Beckman}
\min \left\{ \int \varrho^0(\sigma) \ :\ A^* \sigma= f \ \text{ in $\big(\D'(\Rd)\big)^n$}  ,\ \spt{\sigma}\subset\Ob\right\},
\end{equation}
 where $\varrho^0$ {is the polar} of $\varrho$ given by,
 \begin{equation}\label{dualnorm}
\varrho^0(S) := \sup \big\{ \pairing{Q,S}  \, :\ \varrho(Q)\le 1 \big\}.
\end{equation}
In the classical Rubinstein-Kantorovich framework, $f$ is a scalar measure ($n=1$), $\varrho$ is the Euclidean norm, and $A$ is the gradient operator. In order that the supremum  \eqref{LCP} is finite, $f$ must be  \textit{balanced}, that is $\mu=f_+$ and $\nu=f_-$ must have the same mass. Note that if $\O$ is not convex, then the Euclidean distance appearing in the definition of $W_1(\mu,\nu)$ should be replaced by the geodesic distance induced by $\O$. For a detailed study see \cite{bouchitte2001} where a number of explicit examples is given. 

When $A$ is no longer the gradient operator, there are very few results regarding a possible optimal transport approach.
 In the recent work \cite{bolbou2022}, the present authors put forward a formulation where
  the Monge-Kantorovich distance emerges and is maximized with respect to a suitable class of metrics $d(x,y)$ on $\Omega$.
Therein, the potential $u$ becomes a pair $(v,w):\Omega \to \Rd\times \R$, and it must meet the constraint $e(v)+\frac{1}{2} \nabla w \otimes \nabla w \leq \mathrm{Id}$, where $e(v)$ is the symmetrized gradient.  The non-linear operator on the left hand side defines the strain tensor in the F\"{o}ppl's membrane model \cite{conti2006}, rendering the formulation in \cite{bolbou2022} an optimal membrane problem. Despite the non-linearity, the problem admits the form \eqref{Beckman} upon the right choice of $A$ and $\varrho$, see \cite{bolbou2022} for more details.
 Prior to the latter work, an attempt to treat the case where $A$ is simply the symmetrized gradient (i.e. $Au= e(u)$ for $u:\O\to \R^d$) was made in \cite{bouchitte2008} where \eqref{Beckman} is nothing else but the famous Michell problem. 
However, to our knowledge, the bridge with optimal transport in this case  has not yet been established, and  challenging open problems remain.

A different direction was taken in the work \cite{hanin1994}, later also developed in, e.g., \cite{hanin1997,olubummo2004}. With $A$ being the $k$-th derivative operator and $\varrho$ being the operator norm, the authors proposed a \textit{transshipment} formulation as the dual problem to \eqref{LCP}.  In contrast to the transport formulations, it fixes the difference of the marginals as $\nu-\mu$ rather than imposing them separately. As a result, transshipment formulations lack the existence results in general. Transshipment problems have also been analysed in the setting of vector valued plans, see \cite{ciosmak2021}.

\medskip

 The aim of the present paper is to provide an optimal transport approach in the case of  the Hessian operator  $Au = \nabla^2 u$ and with  $\varrho$ being the operator norm. We will limit ourselves to the case when $\Omega = \Rd$, and we will assume that $f$ is a measure, more accurately
 $f= \nu-\mu$
 for two probabilities $\mu,\nu$. The special choice of $\varrho$ makes it possible to rewrite \eqref{LCP} as the second-order counterpart of \eqref{firstorder},
\begin{equation}\label{secondorder} \I(f) := \sup\left\{  \int u \, d\nu -   \int u \, d\mu \ :\  u\in C^{1,1}(\R^d),\ \  \Lip(\nabla u)\le 1\right\}.
\end{equation}
The supremum  $\I(f)$ is finite and is attained if and only if $\mu$ and $ \nu$  share the barycentre $[\mu]=[\nu]$ while exhibiting finite second-order moments: $\mu, \nu \in \PP_2(\Rd)$.  This way $\I(f)$ defines a distance between such  $\mu$ and $\nu$.   We should point out that it is a particular case of the family of  \textit{ideal metrics}, introduced years ago by V.M. Zolotarev \cite{zolotarev1977} with the aim of studying continuity and stability of stochastic models in probability theory.

Under the foregoing assumptions on the data $\mu,\nu$, we will see that the classical duality theory leads to a well-posed second-order Beckmann formulation,
   \begin{equation}\label{stress} \I'(f):= \min
\biggl\{ \int \varrho^0(\sigma) \ : \ \sigma \in \Mes(\R^d;\Sdd), \ \ \dive^2 \sigma = \nu-\mu \ \ \text{in } \D'(\Rd) \biggr\}
\end{equation} 
with the zero-gap equality $\I(f)=\I'(f)$. 
 Here, $\varrho^0$ is the Schatten norm on symmetric matrices $S\in \Sdd$ given by $\varrho^0(S) = \sum_{i=1}^d |\lambda_i(S)|$, and
   $\int \varrho^0(\sigma)$ is intended in the sense of convex one-homogeneous functionals on measures 
 \cite{Goffman}.
Through the classical methods, the optimality  conditions involving pairs $(u,\sigma)$ can be derived,  even in the case of singular measures $\sigma$ and for  general semi-norms $\varrho$. For a detailed study we refer to e.g.  \cite{bouchitte2007} where  optimal design problems for plates
 are considered.  
   
At the core of our work lies a connection between the pair \eqref{secondorder}, \eqref{stress} and a newly proposed\textit{ three-marginal optimal transport problem.} Let us introduce the cost function defined for each triple $(x,y,z) \in (\Rd)^3$ by,
\begin{equation}\label{cost}
	c(x,y,z):= \frac{1}{2} \Bigl( \abs{z-x}^2 + \abs{z-y}^2 \Bigr).
\end{equation}
We are looking for probabilities $\pi \in \PP_2((\Rd)^3)$ (three-marginal transport plans) that solve,
\begin{equation}\label{OT3} \JJ(\mu,\nu) :=  \inf
\biggl\{ \iiint c(x,y,z) \, \pi(dxdydz)  \ :   \ \pi\in \Sigma(\mu,\nu)\biggr\}.
\end{equation} 
The set $\Sigma(\mu,\nu) \subset \PP_2((\Rd)^3)$ consists of 3-plans $\pi$ whose first and second marginal is
$\mu$ and $\nu$, respectively, and which satisfy the following  equations,
\begin{equation}\label{equilibrium}
	\iiint \! \pairing{z-x, \Phi(x)} \, \pi(dxdydz)  = \iiint \! \pairing{z-y, \Psi(y)} \, \pi(dxdydz) =0,
\end{equation}
for any smooth test functions $\Phi,\Psi: \Rd \to \Rd$.
It will unravel that these relations have a natural  interpretation in probability theory (via martingale plans and convex order) as well as
in structural mechanics (moment equilibrium at junctions of a grillage). 
The main result of the paper reads as follows:

\begin{theorem}\label{main-intro} Take $\mu, \nu \in\PP_2(\Rd)$ sharing the barycentre $[\mu]=[\nu]$, and  let $\JJ(\mu,\nu)$ be defined by \eqref{OT3}.
For  $f=\nu-\mu$ let the value $\I(f)$ be given by \eqref{secondorder}. Then,

\med (i) it holds that,
\begin{equation}
    \label{central_eq}
    \I(f) = \JJ(\mu,\nu),
\end{equation}
while  there exist optimal pairs $(u,\pi)$ solving  \eqref{secondorder} and  \eqref{OT3}, respectively;

\med (ii) an admissible pair $(u,\pi)$ is optimal if and only if the following three-point equality is satisfied  $\pi$-a.e.,
with $c$ defined by \eqref{cost},
\begin{align}\label{3-eq}
[u(y) + \pairing{\nabla u(y), z-y}] -[u(x) + \pairing{\nabla u(x), z-x} ] 
= c(&x,y,z) \\
\nonumber
& \text{for $\pi$-a.e. $(x,y,z)$.}
\end{align}

\end{theorem}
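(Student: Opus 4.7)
The plan is to combine a direct weak-duality inequality (via Taylor's formula and the equilibrium conditions \eqref{equilibrium}) with a reduction of \eqref{OT3} to a minimum-second-moment problem under convex stochastic order, whose classical Strassen-type duality will supply an optimal $C^{1,1}$ potential. The three-point identity \eqref{3-eq} then drops out from complementary slackness.

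For weak duality, the constraint $\Lip(\nabla u)\le 1$ is equivalent to $-\Ident\le \nabla^2 u\le\Ident$, so a Taylor estimate gives for every $(x,y,z)$
$$[u(y)+\pairing{\nabla u(y),z-y}]-[u(x)+\pairing{\nabla u(x),z-x}]\ \le\ c(x,y,z).$$
Integrating against any $\pi\in\Sigma(\mu,\nu)$ and invoking \eqref{equilibrium} with $\Phi=\Psi=\nabla u$ (extended from smooth to Lipschitz test fields by density, which is legitimate thanks to the finite second moments of $\pi$) cancels the linear contributions and yields $\int u\,d(\nu-\mu)\le\iiint c\,d\pi$, hence $\I(f)\le\JJ(\mu,\nu)$.

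The key structural reduction is obtained by applying \eqref{equilibrium} also to the linear test fields $\Phi(x)=x$ and $\Psi(y)=y$. Expanding the squares in $c$ then shows that for any $\pi\in\Sigma(\mu,\nu)$ with third marginal $\rho$,
$$\iiint c\,d\pi\ =\ \int|z|^2\,d\rho\ -\ \tfrac{1}{2}\Bigl(\int|x|^2\,d\mu+\int|y|^2\,d\nu\Bigr),$$
so the cost depends only on $\rho$. By Strassen's theorem, admissibility of $\pi$ is equivalent to $\rho$ dominating both $\mu$ and $\nu$ in convex order, so \eqref{OT3} reduces to the minimization of $\int|z|^2\,d\rho$ under that constraint. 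The feasible set is non-empty (take the law of $X+Y-[\mu]$ with independent $X\sim\mu$, $Y\sim\nu$), the uniform second-moment bound yields tightness, and lower semicontinuity provides an optimal $\rho^*$; gluing two martingale couplings along $\rho^*$ then produces an optimal $\pi^*$.

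The reverse inequality $\I(f)\ge\JJ(\mu,\nu)$ will come from the classical Strassen-type duality
$$\min_\rho \int|z|^2\,d\rho\ =\ \sup\Bigl\{\int\phi\,d\mu+\int\psi\,d\nu :\ \phi,\psi\text{ convex},\ \phi+\psi\le|x|^2\Bigr\},$$
monotonicity forcing the optimal pair to saturate $\phi^*+\psi^*=|x|^2$. This equality encodes the distributional sandwich $0\le\nabla^2\phi^*\le 2\Ident$, which automatically upgrades $\phi^*$ to class $C^{1,1}(\Rd)$. Setting $u^*:=\tfrac{1}{2}|x|^2-\phi^*$ gives an admissible competitor in \eqref{secondorder}, and a short computation combining the dual optimality identity with $[\mu]=[\nu]$ matches $\int u^*\,d(\nu-\mu)$ with $\JJ(\mu,\nu)$, completing (i). For (ii), the integrated Taylor inequality $\int u\,d(\nu-\mu)\le\iiint c\,d\pi$ is tight if and only if the pointwise inequality is saturated $\pi$-a.e., which is precisely \eqref{3-eq}; so optimal pairs $(u,\pi)$ are exactly those satisfying this identity. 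The main anticipated obstacle is a rigorous proof of the Strassen-type duality with attainment in a class delivering $C^{1,1}$ potentials: the admissible set for $(\phi,\psi)$ is invariant under $(\phi,\psi)\mapsto(\phi+\ell,\psi-\ell)$ for any affine $\ell$ (which leaves the objective unchanged thanks to $[\mu]=[\nu]$ and both measures being probabilities), so compactness of a maximizing sequence must be restored through a normalization such as $\phi(0)=0$, $\nabla\phi(0)=0$, after which the quadratic Hessian bound yields uniform $C^{1,1}$ estimates.
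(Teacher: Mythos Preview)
Your overall architecture coincides with the paper's: reduce $\JJ(\mu,\nu)$ to minimizing $\int|z|^2\,d\rho$ under $\rho\succeq_c\mu$, $\rho\succeq_c\nu$, dualize to the convex-pair problem $\sup\{\int\phi\,d\mu+\int\psi\,d\nu:\phi,\psi\text{ convex},\ \phi+\psi\le|\cdot|^2\}$, and pass to $u=\tfrac12|\cdot|^2-\phi$. The genuine gap is the step ``monotonicity forcing the optimal pair to saturate $\phi^*+\psi^*=|\cdot|^2$.'' Monotonicity only lets you replace $\psi^*$ by the largest \emph{convex} minorant of $|\cdot|^2-\phi^*$, namely $(|\cdot|^2-\phi^*)^{**}$, which is in general strictly below $|\cdot|^2-\phi^*$ (for finitely supported data the optimal $\rho^*$ is discrete, so complementary slackness yields saturation only $\rho^*$-a.e.). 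Without global saturation you do not get the upper bound $\nabla^2\phi^*\le 2\,\Ident$, hence neither the $C^{1,1}$ regularity of $\phi^*$ nor the admissibility of $u^*$. Your proposed compactness fix is then circular: a generic convex competitor $\phi$ carries only the one-sided bound $\nabla^2\phi\ge 0$, so the ``quadratic Hessian bound'' you invoke for a normalized maximizing sequence is simply not available.

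What actually closes the loop---and this is the paper's key technical lemma (Lemma~\ref{fundamental})---is the fact that convexification preserves semi-concavity: if $\phi$ is convex, then $\hat\phi:=|\cdot|^2-(|\cdot|^2-\phi)^{**}$ is again convex (equivalently, the convex envelope of the $2$-semi-concave function $|\cdot|^2-\phi$ is $C^{1,1}$). Granting this, one replaces any competitor $(\phi,\psi)$ by the pair $(\hat\phi,\,|\cdot|^2-\hat\phi)$ without decreasing the objective, since $\hat\phi\ge\phi$ and $|\cdot|^2-\hat\phi=(|\cdot|^2-\phi)^{**}\ge\psi$; and now $u:=\tfrac12|\cdot|^2-\hat\phi$ satisfies $\Lip(\nabla u)\le 1$ by construction. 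This yields the reverse inequality directly, with no appeal to global saturation or to attainment in the convex-pair dual. The convexity of $\hat\phi$ is a genuine statement about the regularity of convex envelopes of semi-concave functions and does not follow from the ingredients you list; it needs its own argument (a Jensen-type proof as in the paper, or the results of Azagra--Le Gruyer--Mudarra / De~Philippis--Figalli).
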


It is worth mentioning  that the equality \eqref{3-eq} is in close relation with the admissibility of $u$ in \eqref{secondorder}. Indeed, according to  \cite{legruyer2009}, the condition $\Lip(\nabla u)\le 1$ is equivalent to the  existence of a continuous vector function $U:\R^d\to \R^d$
such that,
\begin{align}\label{3-jet}
 [u(y) + \pairing{{U(y)}, z-y}]  - [u(x) + \pairing{{ U(x)}, z-x} ]
\le c(&x,y,z) \\
\nonumber
 &\forall\, (x,y, z) \in (\Rd)^3 .
\end{align}
In addition,  the inequality \eqref{3-jet} can be satisfied only for {$U= \nabla u$}. 

With the equality \eqref{central_eq} at hand,  we are in a position to propose the tensor counterpart of the decomposition \eqref{truss-measure} that was useful in the first-order gradient case. We show that optimal  measures for  the second-order Beckmann problem \eqref{stress}
can be decomposed to rank-one tensor measures supported on polygonal lines. More precisely, for any triple $(x,y,z)$  let us define  the following measure valued in the space of symmetric matrices, i.e. an element of  $\Mes(\R^d;\Sdd)$,
\begin{equation}\label{si-xyz}
	\sigma^{x,y,z}(d\xi) := \abs{\xi-z}\,\Big(\sigma^{z,x}(d\xi) - \sigma^{z,y}(d\xi)\Big) ,
\end{equation}
where we denote,
\begin{equation}
 \sigma^{a,b} = \frac{b-a}{|b-a|} \otimes  \frac{b-a}{|b-a|} \, \Ha^1 \mres [a,b] .
\end{equation}
The measure $\sigma^{x,y,z}$ (see Fig. \ref{fig:sigmaxyz}) is supported on the set  $[x,z] \cup [z,y]$. Its second distributional divergence is $\dive^2 \sigma^{x,y,z} =f^{x,y,z}$, where 
\begin{equation}\label{f-xyz}
f^{x,y,z}:= \delta_y - \delta_x - \dive\big((z-y)\,\delta_y - (z-x)\,\delta_x \big)
\end{equation}
 includes a first-order term. The key observation is that  $\sigma^{x,y,z}$ solves \eqref{stress} 
 for $f=f^{x,y,z}$ with the equality $\I(f^{x,y,z}) = c(x,y,z)$  provided that  $z$ belongs to the ball $B\big(\frac{x+y}{2}, \frac{|x-y|}{2}\big)$ (see Proposition \ref{optibasic}). 
 Accordingly,  our strategy for solving \eqref{stress} for $f=\nu-\mu$\, is to search for optimal tensor measures
  $\sigma$ in the form,
\begin{equation*}\label{grid-measure}
\sigma = \iiint \sigma^{x,y,z} \, \pi(dxdydz),
\end{equation*}
where $\pi$ is a suitable  three-marginal plan, see the convention \eqref{eq:convention} below. Satisfying the admissibility condition $\pi \in \Sigma(\mu,\nu)$ in the three-marginal optimal transport formulation \eqref{OT3} guarantees that $\dive^2 \sigma = \iiint f^{x,y,z} d\pi = \nu-\mu$.

\begin{corollary}\label{coro1-intro}\  Let $\ov \pi$ be an optimal 3-plan for \eqref{OT3}. Then,

\med (i) the tensor measure $\ov\sigma = \iiint \sigma^{x,y,z} \, \ov\pi(dxdydz)$ solves the second-order Beckmann problem \eqref{stress} for $f=\nu-\mu$;

\med (ii) let {$\ov\gamma:= \Pi_{1,2}^\#(\ov{\pi})$} be the marginal of $\ov\pi$ with respect to the first two variables, and let $\ov u$
be any solution of \eqref{secondorder}. Then, for any {$\ov\pi$-integrable} test function $\varphi: (\Rd)^3\to \R$,  we have the disintegration formula,
$$  \iiint \varphi \,  d\ov\pi =  \iint \varphi\big(x,y, z_{\ov u}(x,y)\big) \, \ov\gamma(dxdy),$$
where
\begin{equation}\label{zbar}
  z_{\ov u}(x,y) := \frac{x+y}{2} + \frac{\nabla \ov u(y)-\nabla \ov u(x)}{2}.
  \end{equation}
  As a result, the optimal measure $\ov\sigma$ is supported on the closed subset,
  \begin{equation}\label{spt-sigma}
\B(\spt \mu, \spt \nu) := \bigcup \left\{ B\Big(\frac{x+y}{2}, \frac{|x-y|}{2}\Big)\,:\, (x,y)\in \spt\mu \times \spt \nu\right\},
  \end{equation}
  where $B(x_0,r)$ is the closed ball of radius $r>0$ and centred at $x_0\in \Rd$.
  
\end{corollary}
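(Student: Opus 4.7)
The plan for part (i) is to verify admissibility of $\ov\sigma$ in \eqref{stress} and then bound its mass. To test admissibility, take $\varphi \in C_c^\infty(\Rd)$; by Fubini and the definition \eqref{f-xyz} of $f^{x,y,z}$,
\begin{align*}
\pairing{\dive^2 \ov\sigma,\varphi} &= \iiint \pairing{f^{x,y,z},\varphi}\,\ov\pi(dxdydz) \\
&= \iiint \Bigl[\varphi(y)-\varphi(x) + \pairing{\nabla\varphi(y),z-y} - \pairing{\nabla\varphi(x),z-x}\Bigr]\,\ov\pi(dxdydz).
\end{align*}
The marginal conditions on $\ov\pi$ yield $\int\varphi\,d\nu - \int\varphi\,d\mu$ from the first two terms, while the equilibrium relations \eqref{equilibrium} applied with $\Phi = \Psi = \nabla\varphi$ kill the remaining two. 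For the mass bound, a direct computation from \eqref{si-xyz} together with subadditivity gives $\int\varrho^0(\sigma^{x,y,z}) \le c(x,y,z)$ for every triple (with equality when $z$ is in the ball, cf.\ Proposition \ref{optibasic}); convex one-homogeneity of $\sigma\mapsto\int\varrho^0(\sigma)$ on $\Mes(\Rd;\Sdd)$ then yields
\[
\int\varrho^0(\ov\sigma) \le \iiint\varrho^0(\sigma^{x,y,z})\,\ov\pi(dxdydz) \le \iiint c(x,y,z)\,\ov\pi(dxdydz) = \JJ(\mu,\nu).
\]
Since $\JJ(\mu,\nu) = \I(f) = \I'(f)$ by Theorem \ref{main-intro}(i), equality must hold throughout, proving that $\ov\sigma$ is optimal for \eqref{stress}.

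For part (ii), I would exploit the three-point optimality condition \eqref{3-eq}. Introduce
\[
g(x,y,z) := \bigl[\ov u(y)+\pairing{\nabla\ov u(y),z-y}\bigr] - \bigl[\ov u(x)+\pairing{\nabla\ov u(x),z-x}\bigr] - c(x,y,z).
\]
By \eqref{3-jet} with $\Phi = \nabla\ov u$ we have $g\le 0$ on $(\Rd)^3$, while \eqref{3-eq} reads $g = 0$ $\ov\pi$-a.e. For each fixed $(x,y)$, the map $z\mapsto g(x,y,z)$ is a strictly concave quadratic with Hessian $-2\Ident$; solving $\nabla_z g = 0$ identifies its unique maximizer as $z = z_{\ov u}(x,y)$ from \eqref{zbar}. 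Since this global maximum is necessarily nonpositive and \eqref{3-eq} forces it to be attained at any $\ov\pi$-typical triple, $\ov\pi$ is concentrated on the graph of the continuous map $(x,y)\mapsto z_{\ov u}(x,y)$. With $T(x,y):=(x,y,z_{\ov u}(x,y))$, both $\ov\pi$ and $T^{\,\#}\ov\gamma$ concentrate on this graph and share the $(x,y)$-marginal $\ov\gamma$, hence $\ov\pi = T^{\,\#}\ov\gamma$, and a change of variables delivers the stated disintegration formula for $\int\varphi\,d\ov\pi$.

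The support statement is then immediate: the Lipschitz bound $|\nabla\ov u(y)-\nabla\ov u(x)|\le |y-x|$ places $z_{\ov u}(x,y)$ inside the ball $B\bigl(\tfrac{x+y}{2},\tfrac{|y-x|}{2}\bigr)$, and convexity of this ball guarantees that both segments $[x,z_{\ov u}(x,y)]$ and $[z_{\ov u}(x,y),y]$ --- hence $\spt\sigma^{x,y,z_{\ov u}(x,y)}$ --- lie inside it. Integrating against $\ov\gamma$, which is concentrated on $\spt\mu\times\spt\nu$, yields $\spt\ov\sigma\subset\B(\spt\mu,\spt\nu)$. The most delicate step throughout is the passage in (ii) from the $\ov\pi$-a.e.\ identity \eqref{3-eq} to the pointwise rigidity $z = z_{\ov u}(x,y)$; this relies essentially on the strict concavity of $g$ in $z$ afforded by the quadratic cost \eqref{cost}, together with the $C^{1,1}$-regularity of $\ov u$ that renders $\nabla\ov u$ continuous and unambiguously defined across $\spt\ov\pi$.
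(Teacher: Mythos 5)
Your proof is correct and follows essentially the same route as the paper: part (i) via subadditivity, the bound $\int\varrho^0(\sigma^{x,y,z})\le c(x,y,z)$, and the equilibrium relations to verify $\dive^2\ov\sigma=\nu-\mu$, then the chain of equalities $\I'=\I=\JJ$; part (ii) via the nonpositivity of $\alpha_u$ and its strict concavity in $z$, which pins the $\ov\pi$-a.e.\ equality \eqref{3-eq} to the unique maximizer $z_{\ov u}(x,y)$. The only cosmetic difference is that you phrase the conclusion as $\ov\pi=T^{\#}\ov\gamma$ for $T(x,y)=(x,y,z_{\ov u}(x,y))$, whereas the paper writes the same fact through the disintegration $\ov\pi=\ov\gamma\otimes\pi^{x,y}$ and shows $\spt\pi^{x,y}\subset\{\alpha_u(x,y,\cdot)=0\}=\{z_{\ov u}(x,y)\}$.
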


Let us recall that, in the first-order case \eqref{first-order_Beckmann}, a geometric bound on the support of any optimal measure can be expressed as in  \eqref{spt-sigma} if we replace the ball on the right-hand side by the line segment $[x,y]$. Contrarily, in the Hessian case, a larger set is required  to cover the support of the possible optimal measures. This will be confirmed on a number of examples.
 Thus, we refute the conjecture in  \cite{bouchitte2007} where, assuming mild conditions on the norm $\varrho^0$ entering \eqref{stress}, it was suggested that optimal measures are supported on the convex hull of the source $f$.
 
 The proof  of the central equality \eqref{central_eq}, that we will present here,  passes through an unexpected link between our three-marginal optimal transport formulation \eqref{OT3} and optimization under the convex order dominance constraints. This connection is based on the observation that  the existence of a 3-plan $\pi\in \Sigma(\mu,\nu)$ which admits  $\rho$ as the third marginal
is equivalent to the convex order conditions $\rho \succeq_c \mu,\ \rho \succeq_c \nu$, that is,
\begin{equation}
	\int \varphi \,d\rho\ \geq \ \max \left\{ \int \varphi \,d\mu , \int\varphi \, d\nu\right\} \quad \text{for all convex \ $\varphi:\R^d \to \R$}.
\end{equation}
 
\begin{theorem}\label{thm2-intro}\ 
Let $\mu, \nu \in \PP_2(\R^d)$ be  probability measures satisfying $[\mu]=[\nu]$, and set
\begin{equation} \label{min-var} 
\V(\mu,\nu):= \inf \Big\{ \var (\rho) \ :\  \rho\in \PP_2(\R^d), \ \ 
\rho \succeq_c \mu  ,\ \ \rho \succeq_c \nu \Big\}  , \end{equation}
where $\var (\rho)$ is the variance of  the measure $\rho$. 

\med (i) With $f = \nu-\mu$, the following equalities hold true,
\begin{align*}
	 \I(f) = \V(\mu,\nu)  - \frac{ \var (\mu) + \var (\nu)}{2} =  \JJ(\mu,\nu).
\end{align*}
Moreover, an admissible $3$-plan $\pi\in \Sigma(\mu,\nu)$ is optimal  for  \eqref{OT3} if and only if its third marginal $\rho$ 
is a minimizer in \eqref{min-var}. 

\med (ii) The infimum in \eqref{min-var} is achieved. Moreover,  to any minimal $\rho$  we can associate at least one  3-plan  $\pi$ that solves \eqref{OT3} and  whose three  marginals are  $\mu,\nu$, and $\rho$, subsequently.
\end{theorem}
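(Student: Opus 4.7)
The plan is to establish the chain of equalities in (i) by reducing the three-marginal cost $\iiint c\,d\pi$ to a functional of the third marginal of $\pi$ alone, and then to identify the set of admissible third marginals with the convex-order-dominating measures entering \eqref{min-var}. Part (ii) will then follow from a direct compactness argument combined with a Strassen-plus-gluing construction. The equality $\I(f)=\JJ(\mu,\nu)$ comes for free from Theorem \ref{main-intro}(i).

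First I would exploit \eqref{equilibrium} with affine test maps. Taking $\Phi\equiv e_i$ and $\Psi\equiv e_i$ yields $[\rho]=[\mu]=[\nu]=:b$, where $\rho$ denotes the third marginal of $\pi$. Taking $\Phi(x)=x$ and $\Psi(y)=y$ gives
\begin{equation*}
\iiint \pairing{z,x}\,d\pi = \int |x|^2\,d\mu,\qquad \iiint \pairing{z,y}\,d\pi = \int |y|^2\,d\nu.
\end{equation*}
Expanding $c(x,y,z)=\tfrac12(|z-x|^2+|z-y|^2)$ and using $\int|\cdot|^2 = \var(\cdot)+|b|^2$ produces the key identity
\begin{equation*}
\iiint c\,d\pi = \var(\rho) - \frac{\var(\mu)+\var(\nu)}{2},
\end{equation*}
so that the cost depends on $\pi$ only through $\rho$. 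I would then characterize the admissible third marginals by showing that $\rho$ arises as the third marginal of some $\pi\in\Sigma(\mu,\nu)$ if and only if $\rho\succeq_c\mu$ and $\rho\succeq_c\nu$. The forward direction is Jensen's inequality: \eqref{equilibrium} encodes that, under $\pi$, the conditional barycentre of $z$ given $x$ is $x$, and likewise for $y$. For the converse, Strassen's theorem furnishes martingale couplings $\alpha\in\Gamma(\mu,\rho)$ and $\beta\in\Gamma(\nu,\rho)$; disintegrating each through $\rho$ and gluing,
\begin{equation*}
\pi(dxdydz) := \alpha_z(dx)\,\beta_z(dy)\,\rho(dz)
\end{equation*}
is a 3-plan in $\Sigma(\mu,\nu)$ with third marginal $\rho$. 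Combining with the variance identity yields $\JJ(\mu,\nu)=\V(\mu,\nu)-\tfrac12(\var(\mu)+\var(\nu))$, with a plan $\pi\in\Sigma(\mu,\nu)$ optimal iff its third marginal achieves the infimum in \eqref{min-var}; invoking Theorem \ref{main-intro}(i) completes part (i).

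For part (ii), I would take any minimizing sequence $\rho_n$ for \eqref{min-var}. Since $[\rho_n]=b$ is fixed and $\var(\rho_n)$ is bounded, $\sup_n\int|z|^2\,d\rho_n<\infty$, so Chebyshev's inequality yields tightness and I extract a narrow limit $\rho$. The variance is narrowly lower semicontinuous along sequences with uniform second moments, so $\var(\rho)\leq\liminf \var(\rho_n)$. The convex-order constraints pass to the limit by testing against continuous convex $\varphi$ with at most linear growth (for which uniform integrability gives $\int\varphi\,d\rho_n\to\int\varphi\,d\rho$, while $\int\varphi\,d\rho_n\geq\int\varphi\,d\mu$ is preserved) and then extending by monotone truncation to arbitrary convex $\varphi$. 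Hence $\rho$ is admissible and optimal for \eqref{min-var}. Applying the Strassen-and-gluing construction of the previous step to this $\rho$ produces a 3-plan $\pi\in\Sigma(\mu,\nu)$ whose three marginals are $\mu,\nu,\rho$ and which, by the variance identity, achieves $\JJ(\mu,\nu)$. The main technical delicacy is the narrow stability of convex order in $\PP_2(\Rd)$, but it is handled by the truncation and uniform-integrability argument just described.
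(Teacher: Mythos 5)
Most of your argument is sound and runs parallel to the paper's: the reduction of the cost via \eqref{equilibrium} with $\Phi=\Psi=\ident$ to the identity $\iiint c\,d\pi=\var(\rho)-\tfrac12(\var(\mu)+\var(\nu))$, the identification of the admissible third marginals with $\{\rho:\rho\succeq_c\mu,\ \rho\succeq_c\nu\}$ by Jensen in one direction and Strassen plus the gluing $\pi=\alpha_z\otimes\beta_z\otimes\rho$ in the other, is exactly the content of Lemma \ref{Sigmamunu=Amunu} and of the computation in the paper's proof of Theorem \ref{thm2-intro}; it correctly yields $\JJ(\mu,\nu)=\V(\mu,\nu)-\tfrac12(\var(\mu)+\var(\nu))$, the optimality criterion in terms of the third marginal, and, combined with your compactness argument for part (ii), the existence statements. (Your existence proof for \eqref{min-var} by narrow compactness, uniform second moments and truncation is a legitimate variant of the paper's weak-* argument in the duality $\bigl(X_{2,0},\Mes_2\bigr)$ inside Proposition \ref{duality-2convex}.)

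The genuine gap is the first equality, $\I(f)=\V(\mu,\nu)-\tfrac12(\var(\mu)+\var(\nu))$, which you dispose of by saying that $\I(f)=\JJ(\mu,\nu)$ ``comes for free from Theorem \ref{main-intro}(i)''. In the paper this is circular: Theorem \ref{main-intro} is \emph{proved from} Theorem \ref{thm2-intro} (the paper explicitly proves the present theorem first and then deduces \ref{main-intro} from it), so you cannot invoke it here without supplying an independent proof. The missing content is precisely the hard half of the theorem: relating the Hessian-constrained supremum $\I(f)$ of \eqref{secondorder} to the minimal-variance value. The paper does this through two ingredients you do not address: the convex-duality identity $\V(\mu,\nu)=\V'(\mu,\nu)$ for the two-convex-potential problem (Proposition \ref{duality-2convex}), and the equality $\V'(\mu,\nu)=\I(\nu-\mu)+\tfrac12(\var(\mu)+\var(\nu))$ (Proposition \ref{LCP=2convex}), whose proof hinges on the regularizing transform $\f\mapsto|\cdot|^2-(|\cdot|^2-\f)^{**}$ sending convex functions to convex $C^{1,1}$ functions (Lemma \ref{fundamental}), which converts admissible pairs $(\f,\psi)$ into admissible competitors $u$ with $\Lip(\nabla u)\le1$. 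Alternatively one could argue directly via the Le Gruyer three-point characterization \eqref{3-jet}, but some such argument must be given; without it your proof establishes only the second equality $\JJ(\mu,\nu)=\V(\mu,\nu)-\tfrac12(\var(\mu)+\var(\nu))$ and parts of (ii), not the full statement.
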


Here, several comments are in order. When proving the relation between $\I(f)$ and $\V(\mu,\nu)$, we shall
 pass through a duality result (Proposition \ref{2conv}) which allows to identify $\V(\mu,\nu)$ as  the supremum  in the following auxiliary problem,
\begin{align*}
	 \V'(\mu,\nu) :=  \sup \bigg\{ \int \f\, d\mu + \int \psi\, d\nu \, :\,  \f ,&\, \psi \ \text{are convex  $C^{1,1}$ functions} , \\
	  &\f(z)+\psi(z)\le |z-z_0|^2 \ \ \ \forall \, z \in\Rd\bigg\},
\end{align*}
where $z_0=[\mu]=[\nu]$.
Then, the crucial equality $ \I(f)  = \V'(\mu,\nu) - \frac{1}{2} \big(\var(\mu) + \var(\nu) \big)$ will be derived directly in Section \ref{IversusW}  by exploiting a smoothing effect due to convexification of semi-concave functions \cite{cependello1998,dephilippis2015, azagra2018}, see
 Lemma \ref{fundamental} where we restate this property.

On another note, the construction of an optimal 3-plan $\ov\pi$ announced in the assertion (ii) will use two martingale  transports \cite{beiglbock2013}: one between $\mu$ and $\rho$, and the other between $\nu$ and $\rho$. Their existence follows from the classical theorem of Strassen.  An optimal $\ov\pi$ can be then constructed via a gluing  argument, see the statement and the proof of Lemma  \ref{equival}. In general, such $\ov\pi$ is not unique. 
 
 \medskip

Let us provide more bibliographical context. In recent decades, there has been an increase in new variants of optimal transport problem. To an extent, our formulation \eqref{OT3} bears  resemblance to a number of them, yet it is not a special case of any. For instance, the search for multi-marginal transport plans is a classical topic,  see \cite{gangbo1998optimal} where a quadratic cost similar to $c(x,y,z)$ was considered. The difference here lies in the freedom of the third marginal and, more importantly, in the extra constraints \eqref{equilibrium}. These prevent the existence of optimal plans that are induced by transport maps, which was one of the main results of \cite{gangbo1998optimal}. From a different perspective, it will appear that \eqref{equilibrium} encodes two martingale-type conditions, naturally placing \eqref{OT3} next to the martingale optimal transport introduced in \cite{beiglbock2013} with financial application in mind, see also \cite{Ghoussoub,henry-labordere2017}. In the latter context, the data $\mu,\nu$ must be in convex order, which makes the formulation dissymmetric with respect to the marginals. By contrast, our problem  \eqref{OT3} is  symmetric thanks to the presence of the free marginal.
In particular,  its well posedness requires merely that the barycentres coincide: $[\mu]=[\nu]$. This symmetry also positions  \eqref{OT3} outside the broad scope of weak optimal transport theory, which was initiated in \cite{gozlan,ABC}, see also \cite{gozlan2020,backhoff-veraguas2022}.

On the other hand, the problem \eqref{min-var} does fall within a larger class of stochastic optimization problems under dominance constraints, for which there exist many applications in mathematical finance, statistical decision theory, or economics \cite{Dentcheva2003,MulSca2006,Wiesel2023}.
In the context of optimal transport,  \eqref{min-var} and its natural extension to multiple data $\mu_1,\ldots, \mu_n$, could be compared to the Wasserstein barycentre problem \cite{agueh2011barycenters}. Both problems give rise to a probability on $\Rd$ that minimizes a specific quadratic cost. Nonetheless, there are some fundamental differences. We will demonstrate that the convex order constraints often cause the support of the optimal dominant $\rho$ to exceed the convex hull of the supports of given measures, which does not occur for the Wasserstein barycentre. Furthermore, Wasserstein barycentres are known to inherit  the $L^\infty$ regularity of the data, whilst it will appear that the solutions to \eqref{min-var} tend to charge lower dimensional sets.

 \medskip

\begin{figure}[h]
	\centering
	\includegraphics*[trim={0cm 0cm -0cm -0cm},clip,width=0.4\textwidth]{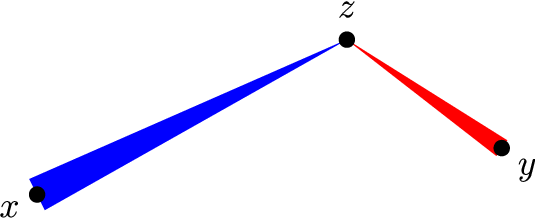}
	\caption{The tensor measure $\sigma^{x,y,z}$; the density with respect to $\Ha^1 \mres([x,z] \cup [z,y])$ is illustrated.  Blue and red indicate the positive and the negative part, respectively.}
	\label{fig:sigmaxyz}   
\end{figure}

We close the introduction with a comment on the close relation between the results presented herein and an optimal design problem in mechanics when $d=2$. Any measure $\sigma \in \Mes(\Rd;\mathcal{S}^{2\times 2})$ that satisfies the equation $\dive^2\sigma=f$ represents a \textit{bending moment tensor} in a plate that is subject to a load $f$. If the measure $\sigma$ is of the form $\iiint \sigma^{x,y,z} d\pi$, then we speak of a \textit{grillage} -- a particular plate that decomposes to straight bars. The bars exhibit linearly varying rank-one bending moments, see Fig. \ref{fig:sigmaxyz} depicting the basic two-bar measure $\sigma^{x,y,z}$. A natural issue studied in the literature  \cite{rozvany1972a,prager1977,bolbotowski2018a} consists in finding an optimal configuration of the grillage, i.e. a coupling $\ov{\pi}$ that minimizes a certain total energy functional. To date, however, the existence result was not available, and neither were the criteria for the finite support of potential solutions $\ov{\pi}$, which corresponds to practical designs in the form of finite systems of bars. 

In this work we show that, when the load $f$ is a measure, an optimal grillage can be recast by solving the new three-marginal optimal transport formulation \eqref{OT3}. As a byproduct, we get the bound $\B(\spt f_+, \spt f_-)$ on the support of the associated tensor measure $\ov{\sigma}$.  Moreover, a finitely supported optimal 3-plan $\ov\pi$ can be selected provided that $f$ is also finitely supported. In this case, the induced measure $\ov{\sigma}$ concentrates on a graph.  It should be noted that, despite a similarity to the optimal grillage problem, there is no such OT reformulation for the more popular \textit{optimal truss problem}. In fact, it has been known for 120 years that optimal trusses do not exist even for the simplest load data \cite{michell1904}. In this case, a relaxation in the form of the famous  Michell formulation \cite{bouchitte2008,lewinski2019} is essential. On top of that, a geometric bound on the support of its solutions is still pending. 

Finally, we stress that our results concerning optimal grillages do not immediately extend to the case of a source $f$ containing a first-order distribution term, or to the case when the support of the induced stress $\ov\sigma$ is confined within a given domain $\Omega\subset\Rd$. Such extensions are beyond the scope of this paper and are worthy of future study.

\bigskip
	
	The paper is organized as follows.
	In the preliminary Section \ref{Prelim}  we adapt the classical duality  theory to show the no-gap equality between \eqref{secondorder} and \eqref{stress}, as well as the existence of an optimal pair $(u,\sigma)$. Besides, in view of the forthcoming connection with the stochastic optimization,  we give a short background on convex order and its relation with martingale transport. 
	The Section \ref{proofs} presents the proofs of Theorem \ref{main-intro}, Corollary \ref{coro1-intro},
	and Theorem \ref{thm2-intro}.
	 In Section \ref{examples}, we give a series of examples where optimal configurations are determined  explicitly. 	 
	 The Section \ref{meca} is devoted to the underlying 2D optimal design formulation. Numerical examples of optimal grillages are given and discussed along with the related open questions. Readers who are less interested in the applications should feel free to skip this final  section.

\subsection*{Acknowledgements}  
The first author would like to thank the Laboratoire IMATH, Université de Toulon  for hosting his visit in June 2022. 
The second author was partially supported by the French project ANR-23-CE40-0017.   
Both authors are grateful to the Lagrange Mathematics and Computing Research Center in Paris for facilitating fruitful discussions with Guillaume Carlier, Quentin Mérigot, and Filippo Santambrogio. The authors are thankful for their remarks which have influenced this paper. 
    
\subsection*{Notations} Throughout the paper we will use the following notations.

\begin{itemize}[leftmargin=1.2\parindent]
\item  The Euclidean norm of $z\in\R^d$ is denoted by $|z|$.

\item By $\Sdd$ we shall denote the space of $d\times d$ symmetric matrices, while $\Sddp$ will be its subset whose elements are positive semi-definite.
Given $A,B \in \Sdd$, we will write $A\le B$ if $B-A\in \Sddp$. Moreover, $\tr A$ stands for the trace of $A$, while $\Ident$ is the identity matrix.

\item  For natural $k\in \mathbbm{N} \cup\{+\infty\}$, $C^{k}(\Rd)$ is the  spaces of functions on $\Rd$ that are continuously differentiable up to order $k$, while $\nabla u$ and $\nabla^2 u$ are the gradient and the Hessian of a function $u\in C^1(\Rd)$ and $u\in C^2(\Rd)$, respectively. Moreover, $C_0(\Rd) \subset C^0(\Rd)$ denotes the subset of continuous functions that vanish at infinity.

\item  $\D(\Rd)$ denotes the  space of $C^\infty$ functions that are compactly supported, and $\D'(\Rd)$ is the space of distributions on $\Rd$ (the dual of $\D(\Rd)$).

\item  For a function $v:\Rd \to \R^n$, $\Lip(v)$ stands for the Lipschitz constant equal to $\sup_{x \neq y} \frac{\abs{v(x)-v(y)}}{\abs{x-y}}$.

\item  By $C^{0,1}(\Rd)$ (resp. $C^{1,1}(\Rd)$) we understand the Banach space of these functions $u \in C^0(\Rd)$  (resp.  $u\in C^1(\Rd)$) for which $\Lip(u)<+\infty$ (resp. $\Lip(\nabla u) <+\infty$).

\item For a natural $k$, $W^{k,\infty}_{\mathrm{loc}}(\Rd)$ is the space of functions $u$ that belong to the Sobolev space $W^{k,\infty}(\Omega)$ for any pre-compact domain $\Omega \subset \Rd$. For $u\in W^{2,\infty}_{\mathrm{loc}}(\Rd)$, the weak Hessian is denoted by $\nabla^2 u$.

\item $\Mes_+(\Rd)$ is the space of Borel measures on $\R^d$ with values in $[0,+\infty]$.  
  The Banach space of Borel measures valued in a finite dimensional normed vector space $E$ is denoted by $\Mes(\Rd;E)$. In addition, we agree that $\Mes(\Rd):=\Mes(\Rd;\R)$.

\item  $\mathcal{L}^d,\Ha^k, \delta_{x_0}$ are, respectively, the Lebesgue, the $k$-dimensional Hausdorff, and the Dirac delta measures on $\Rd$.
  
 \item The topological support of $\mu \in \Mes(\Rd;E)$ is denoted  by $\spt\mu$, 
 while $\mu\res A$ is the  restriction to a Borel subset $A\subset \Rd$. By the symbol $\mu \ll \nu$ we understand the absolute continuity of a measure $\mu$ with respect to $\nu \in \Mes_+(\Rd)$.

\item For a measure $\mu$ and a $\mu$-measurable map $T$, by  $T^\#(\mu)$ we understand the push forward, i.e.  $T^\#(\mu)(B) := \mu\big( T^{-1}(B) \big)$ for every Borel set $B$.

 \item  $\PP(\Rd):=\{ \mu \in \Mes_+(\Rd)\, :\, \mu(\Rd)=1\}$ is the set of probabilities on $\Rd$.

\item For $\gamma \in \PP(\Rd \times \ldots \times \Rd)$ on the product of $n$ ambient spaces, by $\gamma_{k_1,\ldots,k_m}$ we understand the marginal $\Pi_{k_1,\ldots,k_m}^\#(\gamma)$ where, for $m\leq n$,  $\Pi_{k_1,\ldots,k_m}$ is the projection onto the coordinates ${k_1,\ldots,k_m}$.

\item Assume $\mu\in \Mes_+(\R^n)$ and a map $x \mapsto \lambda^x \in \Mes(\R^m;E)$ that is $\mu$-measurable in the sense that $x \mapsto \lambda^x(A)$ is $\mu$-measurable for any Borel set $A \subset \R^m$. Provided that $\int \abs{\lambda^x}(\Rd) \,\mu(dx) < +\infty $, we will use the notation,
\begin{equation}
	\label{eq:convention}
	 \nu = \int  \lambda^x \mu(dx), \qquad \gamma= \mu \otimes \lambda^x
\end{equation}
to define measures $\nu \in \Mes(\R^m;E)$ and $\gamma \in \Mes(\R^n \times \R^m;E)$  that satisfy,
\begin{align*}
	\nu(A) := \int \lambda^x(A)\, \mu(dx), \quad \gamma(B):= \int  \left(  \int  \chi_{B}(x,y)\,\lambda^x(dy) \right) \mu(dx)
\end{align*}
for every Borel sets $A \subset \R^m$ and $B \subset \R^n \times \R^m$, where $\chi_B$ is the characteristic function of the latter.

 \item $[\mu]$ stands for the barycentre of a probability $\mu\in \PP(\Rd)$ whose first-order moment is finite, whilst $\var(\mu):= \int \abs{x-[\mu]}^2 \mu(dx)$ is its variance.

\item $\nu \succeq_c \mu$ denotes the convex order between  two probability measures $\mu,\nu\in \PP(\Rd)$ of finite first-order moments, and $MT(\mu,\nu) \subset \PP(\Rd \times \Rd)$ is the set of martingale transport plans.

\item $\mu \star \nu$ stands for the convolution of two probabilities  $\mu,\nu\in \PP(\Rd)$.

\item  $\pairing{\argu,\argu}$ will be used to denote a canonical scalar product in a finite dimensional space of vectors or matrices, whilst in the case of infinite dimensional spaces it will stand for the  duality bracket.

\item  The double distributional divergence $\dive^2$ of a matrix measure  $\sigma \in \Mes(\Rd;\Sdd)$  is an element of $\D'(\Rd)$ that is defined as follows,
\begin{equation*}
    \dive^2 \sigma = f \ \ \text{in \ } \D'(\Rd) \quad \Leftrightarrow \quad {\pairing{\nabla^2 \varphi, \sigma} }= \pairing{\varphi,f} \quad \ \ \forall\,\varphi \in \D(\Rd).
\end{equation*}

\item  Given a tensor-valued measure $\sigma\in \Mes(\R^d;\Sdd)$,  $\int \rho^0(\sigma)$ will denote 
 the  integral in the sense of the Goffman-Serrin convention \cite{Goffman}, namely,
 \begin{equation*}
    \int \rho^0(\sigma) := \int  \rho^0\! \Big(\frac{d \sigma}{d \theta}\Big) d\theta,
\end{equation*}
where $\theta$ is any non-negative Radon measure $\theta\in \Mes_+(\R^d)$ such that $\sigma \ll \theta$.
Due to the one-homogeneity of $\rho^0$, the above expression does not depend on $\theta$.
\end{itemize}


\bigskip
\section{Preliminaries}\label{Prelim}

\subsection{The classical duality framework}\label{dualclassic}  The duality theory involving  the linear constrained problem $\I(f)$ in \eqref{LCP}
and the general Beckmann formulation \eqref{Beckman}
is well understood in the case of the Hessian operator as far as we are confined in a bounded domain  $\O \subset \R^d$ and when $\varrho$ is any norm on $\Sdd$, see for instance \cite{bouchitte2007}. Since we are  concerned with the case $\O=\R^d$, some specific  functional spaces will prove useful in showing the existence  of solutions as well as  in the duality arguments.
Therefore, in addition to the general notations  given in the introduction, for $p\ge 1$ we introduce:
\begin{enumerate}[leftmargin=1.2\parindent]

\medskip
\item [-]  $\Mes_p(\Rd)$, the space of Borel signed  measures $\mu$ on $\R^d$ such that
 $\int (1+|x|^p) \, |\mu|(dx)<+\infty$. Then, $\PP_p(\R^d)$ stands for the subset of $\Mes_p(\R^d)$ consisting of probability measures with finite $p$-moment. The definition extends naturally to $\Mes_p(\Rd;E)$, where $E$ is a finite dimensional normed vector space.

\medskip
\item [-] $X_p(\R^d)$, the set of continuous functions $u\in \C^0(\R^d)$ such that 
$\Vert u\Vert_{X_p}:= \sup \frac{|u(x)|}{1+ |x|^p}<+\infty$.
The closed subspace $X_{p,0}(\Rd)$, consisting of those $u$ such that $\lim_{|x|\to+\infty} \frac{|u(x)|}{1+ |x|^p}=0$, is a separable
Banach space.

\end{enumerate}
A pairing  between $X_p(\R^d)$ and $\Mes_p(\Rd)$ is defined by $\pairing{u, \mu}:= \int_{\Rd} u\, d\mu$.
Noticing that  {$X_{p,0}(\Rd) = {(1+ |\argu|^p)}\, \C_0$},  it is easy to see  that the topological dual of $X_{p,0}(\Rd)$ can be identified with  $\Mes_p(\Rd)$ through this duality bracket. 
As a consequence of the dominated convergence theorem, we have a useful  convergence criterium for a sequence $(v_n)$ in $X_p(\R^d)$, namely,
\begin{align}\label{domiCV}
\sup_n \| v_n\|_{X_p} <+\infty \quad \text{and}\quad \lim_{n\to \infty} & v_n(x) =0 \quad \forall \,x\in\R^d \\
\nonumber
 &\Rightarrow \quad \ \ \pairing{v_n,\mu}  \to 0 \quad \ \ 
 \forall\, \mu\in\Mes_p(\R^d) .
\end{align}

The next result applies to general first-order distributional source terms  of the kind $f = f_0 - \dive F$, where
 $(f_0, F)$ is any pair in $\Mes_2(\R^d)\times\Mes_1(\R^d;\R^d)$ such that the following balance condition is met,
\begin{equation}\label{genload}
  \int f_0=0, \qquad  {\int x f_0 + \int F=0
} .
\end{equation}
The two conditions  mean that $f$ is orthogonal to affine functions, which  is clearly necessary for the finiteness of
\begin{equation}
	\label{If}
	 \I(f) =  \sup\Big\{ \pairing{u,f} :\ u\in C^{1,1}(\R^d),\ \ \Lip(\nabla u)\le 1  \Big\}.
\end{equation}
We recall the dual problem that we have named the second-order Beckmann formulation,
\begin{equation}
	\label{Ifprime}
	\I'(f) = \inf \biggl\{ \int \varrho^0(\sigma) \, : \, \sigma \in \Mes(\R^d;\Sdd), \  \dive^2 \sigma = f \ \text{in } \D'(\Rd) \biggr\} ,
\end{equation}
where  $\int \varrho^0(\sigma)$ is intended in the sense of convex one-homogeneous functionals on measures 
 \cite{Goffman}.
 
\begin{proposition}\label{nogap-classic} Assume that $f$ given  above satisfies \eqref{genload}. Let $\varrho$ be any norm on $\Sdd$ 
and $\varrho^0$ its polar defined by \eqref{dualnorm}. Then,  the supremum in \eqref{If} and the infimum in \eqref{Ifprime} are reached. 
Furthermore, we have the equality,
\begin{equation} \label{gap=0}
\I(f) =  \I'(f).
\end{equation}

\end{proposition}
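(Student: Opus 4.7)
The plan is to apply the Fenchel--Rockafellar duality theorem in a weighted functional setting, and to complement it with direct-method arguments for the attainment of both the supremum and the infimum.

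For the zero-gap equality \eqref{gap=0}, I would work in the space $X := \{u \in C^{1,1}(\Rd) : u(0)=0,\ \nabla u(0)=0\}$ normed by $\Lip(\nabla u)$. The balance conditions \eqref{genload} ensure that this affine normalization does not alter $\pairing{u,f}$, while the bounds $|\nabla u(x)|\le \Lip(\nabla u)\,|x|$ and $|u(x)|\le \tfrac12\Lip(\nabla u)\,|x|^2$ make $u\mapsto\pairing{u,f}$ a continuous linear form on $X$, because $f_0\in\Mes_2(\Rd)$ and $F\in\Mes_1(\Rd;\Rd)$. Set $Y := C_0(\Rd;\Sdd)$, define $Au := \nabla^2 u$, and let $G$ be the convex indicator of the pointwise constraint $\{Q\in Y : \varrho(Q(x))\le 1\ \forall\, x\in\Rd\}$. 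Then
$$ -\I(f) = \inf_{u\in X}\bigl\{-\pairing{u,f} + G(Au)\bigr\}. $$
Since $G$ is continuous at $A(0)=0$ and the linear form $u\mapsto\pairing{u,f}$ is continuous on $X$, the standard Fenchel--Rockafellar theorem applies. Computing the Legendre transforms, one has $G^*(\sigma)=\int\varrho^0(\sigma)$ in the Goffman--Serrin sense (thanks to the one-homogeneity of $\varrho^0$), the adjoint condition reads $A^*\sigma=\dive^2\sigma=f$ in $\D'(\Rd)$ (the balance \eqref{genload} renders the extension to affine test functions consistent), and the dual problem coincides with $-\I'(f)$. The theorem then yields the equality $\I(f)=\I'(f)$ together with the attainment of $\I'(f)$; the optimal $\sigma$ lies in $\Mes(\Rd;\Sdd)$ of finite total variation because $\int\varrho^0(\sigma)\le \I(f)<\infty$ and all norms on $\Sdd$ are equivalent.

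For the attainment of $\I(f)$, I would proceed by the direct method. Take a maximizing sequence $(u_n)$ and subtract the affine function $x\mapsto u_n(0)+\pairing{\nabla u_n(0),x}$, which leaves $\pairing{u_n,f}$ invariant by \eqref{genload}. The constraint $\Lip(\nabla u_n)\le 1$ then forces $|\nabla u_n(x)|\le |x|$ and $|u_n(x)|\le|x|^2/2$, so $(u_n)$ and $(\nabla u_n)$ are locally bounded and equicontinuous. Arzel\`a--Ascoli produces, along a subsequence, a locally uniform limit $\ov u\in C^{1,1}(\Rd)$ with $\Lip(\nabla\ov u)\le 1$. The dominated-convergence criterion \eqref{domiCV}, applied to $v_n := u_n-\ov u\in X_{2,0}(\Rd)$ and to $\nabla u_n-\nabla\ov u\in X_{1,0}(\Rd;\Rd)$, then yields $\pairing{u_n,f}\to\pairing{\ov u,f}$, so $\ov u$ is optimal.

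The main obstacle I anticipate is the rigorous identification of the dual variable as a genuine Radon measure on the unbounded ambient $\Rd$: the primal constraint $\varrho(\nabla^2 u)\le 1$ is a pointwise $L^\infty$-type condition on all of $\Rd$, while the generic Fenchel dual a priori lives in a space that may be strictly larger than $\Mes(\Rd;\Sdd)$. A safe alternative is to first carry out the duality on balls $B_R\supset \spt f_0\cup\spt F$ via the bounded-domain framework of \cite{bouchitte2007}, and then pass to the limit $R\to\infty$ using the uniform estimate $\int\varrho^0(\sigma_R)\le \I(f)$ to extract a weak-$*$ limit $\ov\sigma\in\Mes(\Rd;\Sdd)$; lower semicontinuity of the Goffman--Serrin integral then delivers optimality of the limit.
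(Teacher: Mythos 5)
Your direct-method argument for the attainment of $\I(f)$ (affine renormalization using \eqref{genload}, the bounds $|u_n|\le\frac12|x|^2$, $|\nabla u_n|\le|x|$, Arzel\`a--Ascoli, and the criterion \eqref{domiCV}) is exactly the paper's argument and is fine. The gap is in the duality. Your primary Fenchel--Rockafellar setup is ill-posed: for admissible $u\in C^{1,1}(\Rd)$ the Hessian $\nabla^2u$ is only an $L^\infty$ field (and even for smooth admissible $u$, e.g.\ $u=\frac12|x|^2$, it has no decay), so $A=\nabla^2$ does not map your space $X$ into $Y=C_0(\Rd;\Sdd)$, and the pairing needed to compute $G^*$ and the adjoint condition is simply not defined; replacing $Y$ by $C_b$ or $L^\infty$ destroys precisely what you need, since the dual of those spaces is not $\Mes(\Rd;\Sdd)$ --- the very obstacle you flag. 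The paper's way around this is to perturb the \emph{constraint} rather than dualize the operator: it sets $h(\zeta):=\inf\{-\pairing{u,f}\,:\,u\in\D(\Rd),\ \varrho(\nabla^2u+\zeta)\le1\}$ with $\zeta$ ranging over $C_0(\Rd;\Sdd)$, whose dual \emph{is} $\Mes(\Rd;\Sdd)$; convexity and continuity of $h$ at $0$ give $h(0)=-\min h^*$, hence $\I_{\mathrm{reg}}(f)=\I'(f)$ for the value $\I_{\mathrm{reg}}$ computed over compactly supported competitors, and the chain $\I_{\mathrm{reg}}\le\I\le\I'$ closes the argument (the inequality $\I\le\I'$ itself is not free on $\Rd$: it needs mollification of $u$ and the integration-by-parts Lemma \ref{lemma:byparts} with the cutoff estimates of Appendix \ref{app:byparts}, a point your proposal glosses over). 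Attainment of $\I'(f)$ is then obtained independently by the direct method (coercivity of $\sigma\mapsto\int\varrho^0(\sigma)$ and weak-$*$ closedness of $\{\dive^2\sigma=f\}$).

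Your fallback does not repair the gap as written. First, it presupposes balls $B_R\supset\spt f_0\cup\spt F$, while the hypotheses only give moment bounds, so the supports may be unbounded. Second, and more seriously, the estimate you invoke, $\int\varrho^0(\sigma_R)\le\I(f)$, points the wrong way: the ball problem \emph{relaxes} the Hessian constraint to $B_R$ only, so its value satisfies $\int\varrho^0(\sigma_R)=\I_{B_R}(f)\ge\I(f)$. What the exhaustion really requires is $\limsup_{R\to\infty}\I_{B_R}(f)\le\I(f)$, i.e.\ that near-maximizers of the ball problems converge (after affine renormalization, by a diagonal compactness argument) to a globally admissible competitor with no loss in $\pairing{\cdot,f}$; this is essentially the global approximation/extension step that the authors explicitly say (footnote in Step 2 of their proof) they were unable to carry out directly, and for data with unbounded support it would additionally require truncating $f$ and proving stability of both values under that truncation. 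So either supply that missing convergence argument in detail, or switch to the perturbation-of-constraint duality in $C_0(\Rd;\Sdd)$ described above.
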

\begin{proof}  We begin by proving the existence of a maximizer for \eqref{If}.  By the orthogonality conditions \eqref{genload}, we may restrict the supremum to functions $u$ belonging to the subset,
$$  K_0:= \Big\{u\in C^{1,1}(\R^d) \ : \ \Lip(\nabla u)\le 1,\ u(0)=0,\ \nabla u(0)=0 \Big\}.$$
Let $(u_n)$ be  a maximizing sequence in $K_0$. Then, $|u_n(x)| \le \frac1{2} |x|^2$ and  $|\nabla u_n(x)|\le |x|$.
By applying Arzela-Ascoli  theorem, we can assume that
$(u_n,\nabla u_n) \to (u,\nabla u)$ uniformly on compact subsets, where $u$ is a suitable element of $K_0$. 
To prove that $u$ is optimal, we only need to check that  $\pairing{u_n,f} \to \pairing{u,f}$, which, due to the particular form of $f$,   reduces to showing that,
$$  \pairing{u_n-u,f_0} \to 0, \qquad \pairing{\nabla (u_n-u),F} \to 0.$$
Put $v_n = u_n-u$. Then,  $(v_n,\nabla v_n)\to (0,0)$ pointwisely, while $|v_n(x)| \le |x|^2$ and $|\nabla v_n(x)|\le 2  |x|$.
 Therefore, $(v_n,\nabla v_n)$ is bounded in  $X_2(\R^d)\times X_1(\R^d;\Rd)$, and the convergence criterium \eqref{domiCV} applies.

  The existence of a minimal $\sigma$ in \eqref{Ifprime} follows from the direct method. Indeed,  the convex functional 
 $\sigma \in \Mes(\R^d;\Sdd) \mapsto  \int \varrho^0(\sigma)$  is coercive (hence inf-compact for the weak-* topology of $\Mes(\R^d;\Sdd)$),  while  the distributional constraint $\dive^2\sigma=f$ is weakly-* closed.
 
 We now prove the equality \eqref{gap=0} within two steps.
 
 \noindent {\bf Step 1:   $\I(f)\le \I'(f).$}  It is enough to prove  the following inequality,
 \begin{equation}\label{inf>sup}
 \pairing{u,f}   \le  \int \varrho^0(\sigma) \qquad\text{for every $(u,\sigma)\in K_0\times \Sf$},
\end{equation}
 where\   $\Sf:=\big\{\sigma \in \Mes(\R^d;\Sdd)\ :\ \dive^2 \sigma = f \ \text{in $\D'(\R^d)$}\big\}.$
 
 First we observe that we need only to show \eqref{inf>sup} for $u\in K_0\cap C^\infty$. Indeed, we may approximate any $u\in K_0$
 by the convolution $u_n= u\star \rho_n$, where  $\rho_n= n^d \rho(nx)$ is a sequence of mollifiers ($\rho\in \D(\R^d;\R_+)$, and $\int \rho=1$).
 Then, $\Lip(\nabla u_n) \le  \Lip(\nabla u) \le 1$, and, therefore, the sequence $(u_n,\nabla u_n)$ is bounded in $X_2(\R^d)\times X_1(\R^d;\Rd)$. The convergence $\pairing{u_n,f}\to \pairing{u,f}$ can be obtained
 by applying once more the criterium \eqref{domiCV} to $v_n=u_n-u$ and to $\nabla v_n$. 
 
 Let us now consider an element $u\in K_0 \cap C^\infty$ and a generic $\sigma\in \Sf$. Then, recalling \eqref{dualnorm}, we have
 $$ { \pairing {\nabla^2 u, \sigma} } \le  \int \varrho(\nabla^2u)  \, \varrho^0(\sigma) \le   \int \varrho^0(\sigma) <+\infty.$$
 Then, our claim \eqref{inf>sup} follows from Lemma \ref{lemma:byparts} (see Appendix \ref{app:byparts}) which states that,
\begin{equation}\label{byparts}
 {\pairing {\nabla^2 u, \sigma}}  =   \pairing{u,f}  =\pairing{u,f_0} + \pairing{\nabla u, F} \qquad  \forall \, \sigma\in \Sf .
 \end{equation}
This concludes Step 1.

 \smallskip
\noindent {\bf Step 2:   $\I(f)\ge \I'(f)$}. 
 We are going to show the equality,
$$ \I_{\rm reg}(f):= \sup \Big\{ \pairing {u,f}\ :\  u\in \D(\R^d), \ \ \Lip(\nabla u)\le 1\Big\}  = \I'(f) .$$
Clearly,  $\I_{\rm reg}(f)$  is not larger than $\I(f)$ and, thanks to Step 1,  the  equality above will imply that the three quantities coincide\footnote{Unfortunately, we were unable to find a direct approximation of an admissible $u$ by a sequence 
$(u_n)$ of compactly supported functions such that $\Lip(\nabla u_n)\le 1$.}. We introduce the value function $h: \C_0(\R^d;\Sdd) \to \R \cup \{+\infty\}$ defined by,
$$ h(\zeta) := \inf \Big\{ - \pairing {u,f} \, :  \,  u\in \D(\R^d), \ \varrho( \nabla^2 u + \zeta)\le 1  \Big\},
 $$
 with $h(\zeta)=+\infty$ if no admissible $u$ exists.
Then, $h$ is a proper convex functional such that $h(\zeta)\le 0$ whenever $\sup_x \varrho\big(\zeta(x)\big)\le 1$ ($u=0$ is then an admissible
competitor). It follows that $h$ is continuous at $0$ (with respect to the norm topology of $C_0(\R^d;\Sdd)$),
where it takes the value $h(0) = - \I_{\rm reg}(f)$. By the classical result of convex analysis (see Appendix \ref{app:convex}), it holds that,
 $$h(0) = h^{**}(0) =  -\min h^* .$$
 Then, the wished equality $\I_{\rm reg}(f)=\I'(f)$ will follow if we can identify the convex conjugate of $h$ as,
\begin{equation}\label{h*}
 h^*(\sigma)= \int \varrho^0(\sigma)\quad \text{if\quad$\dive^2 \sigma=f $ in $\D'(\R^d)$}
 , \qquad h^*(\sigma)= +\infty \ \  \text{otherwise}.
\end{equation}
For $\sigma\in \Mes(\R^d;\Sdd)$ we compute,
 \begin{align*} h^*(\sigma) &= \sup_{\zeta\in C_0(\R^d;\Sdd)} \left\{ \pairing{\zeta, \sigma} - h(\zeta)  \right\}   \\
 &=  \sup_{u\in \D(\Rd)} \ \sup_{\zeta\in C_0(\R^d;\Sdd)}  \left\{ \pairing{\zeta, \sigma} + \pairing{u,f} \ : \ 
 \varrho(\nabla^2 u+\zeta)\le 1 \right\}  \\
 &=  \sup_{u\in \D(\Rd)} \ \sup_{\chi\in C_0(\R^d;\Sdd)} \left\{ \pairing{\chi, \sigma} -\pairing{\nabla^2u, \sigma}+  \pairing{u,f} \ : \ 
 \varrho(\chi)\le 1 \right\} \\
 &= \int \varrho^0(\sigma) +\sup_{u\in \D(\Rd)} \left\{  -\pairing{\nabla^2u, \sigma}+  \pairing{u,f}  \right\} , 
\end{align*}
where: 
\begin{enumerate}
\item [-]  in the third line, we changed variable to $\chi= \nabla^2 u+ \zeta$ which runs over the whole space $C_0(\R^d;\Sdd)$;
\item [-]  in the last line, for fixed $u$, we have taken the supremum with respect to $\chi$ recovering $\int \varrho^0(\sigma)$ which agrees with the support function of  the subset $\{\chi \in C_0(\R^d;\Sdd) : \varrho(\chi)\le 1\}$, see \cite{bouchitte1988}.
\end{enumerate}
The finiteness of $h^*(\sigma)$ implies that $\pairing{u,f}= \pairing{\nabla^2u, \sigma}$ for every $u\in \D(\R^d)$, meaning that
$\dive^2 \sigma= f$ in $\D'(\Rd)$. This proves  \eqref{h*} and concludes Step 2.
The proof of Proposition \ref{nogap-classic} is complete.
\end{proof}




\subsection{Convex order and martingale transport}\label{MTsurv} Stochastic ordering plays an important role in probability theory as a tool for comparing random variables through their probability laws. 
 Here, we are concerned specifically with the convex order between measures in $\Mes_p(\R^d;\R_+)$ ($p\in \{1,2\}$).
 To any measure $\mu\in \Mes_1(\Rd;\R_+)$ we associate its total mass $\|\mu\|$ and its barycentre $[\mu]$ given by,
$$   \|\mu\| = \int  \mu, \qquad [\mu]=  \frac1{\|\mu\|} \int x \, \mu(dx).$$
 
 \begin{definition}\label{def:convexorder} Given  two non-negative measures $\mu,\nu$ in  $\Mes_1(\R^d;\R_+)$,  we say that $\nu$ dominates $\mu$ in the sense of convex order, in short  $\nu  \succeq_c \mu$,  if for every \emph{convex} function $\varphi:\R^d \to \R$ there holds the inequality,
\begin{equation}\label{order}
\int \varphi\, d\nu \ge \int  \varphi\, d\mu .
 \end{equation}
 \end{definition}
 By the  Moreau-Yosida infimal convolution technique, we know that any convex lower semi-continuous function 
 $\varphi: \R^d \to \R \cup {+\infty}$ is the non-decreasing limit 
 of a sequence of convex Lipschitz continuous functions. 
 Therefore, in order to show that $\nu  \succeq_c \mu$,  the inequality \eqref{order} needs to be checked  for  convex Lipschitz continuous functions  only. If it is the case, then  \eqref{order} automatically extends to any convex lower semi-continuous $\varphi: \R^d \to \R\cup \{+\infty\}$.

The following  properties are straightforward.
\begin{itemize}

\item By testing \eqref{order}  with affine functions (these are integrable), we see  that,
  $$  \nu  \succeq_c \mu \qquad \Rightarrow  \qquad  \|\mu\| =\|\nu\| \quad \text{and} \quad [\mu]=[\nu] .$$
\item \  $\mu \succeq_c \delta_{[\mu]}$ \   (Jensen's inequality).
\end{itemize}

The next characterization of the convex order is crucial. Beforehand, let us recall that for any  coupling measure  $\gamma \in \Gamma(\mu,\nu)$ there exists a unique disintegration of the form $\gamma = \mu \otimes \gamma^x$ (see the notation \eqref{eq:convention}), where $x \mapsto \gamma^x \in \PP(\Rd)$ is a $\mu$-measurable  mapping, cf. for instance Section 2.5 in \cite{ambrosio2000}. By the set of martingale transport plans $MT(\mu,\nu) $ we understand the family of  those couplings $\gamma \in \Gamma(\mu,\nu)$
whose disintegration
satisfies the condition $[\gamma^x] = x$ \   $\mu$-a.e. It turns out that the set $MT(\mu,\nu)$ is non-empty if and only if $\nu  \succeq_c \mu$. This fact is a direct corollary of the Strassen theorem \cite{strassen1965}:

\begin{theorem}[\textit{Strassen}] The convex order
    $\nu  \succeq_c \mu$ holds true if and only if there exists a $\mu$-measurable map
 $x\mapsto p^x \in \PP_1(\R^d)$ such that:
 \begin{enumerate}[label={(\roman*)}] 
\item   $[p^x] = x$ \   $\mu$-a.e.,
\item  $\nu (B) = \int p^x(B)\, \mu(dx)$  for any Borel set $B\subset \R^d$. 
\end{enumerate}
\end{theorem}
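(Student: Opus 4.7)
The plan is to prove the two directions separately.

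The ``if'' direction is immediate. Given a $\mu$-measurable kernel $x\mapsto p^x$ with $[p^x]=x$, for any convex $\varphi:\R^d\to\R$, Fubini and Jensen's inequality yield
$$\int \varphi\, d\nu \ =\ \iint \varphi(y)\, p^x(dy)\, \mu(dx)\ \ge\ \int \varphi([p^x])\, \mu(dx)\ =\ \int \varphi\, d\mu,$$
so $\nu\succeq_c \mu$.

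For the converse, assume $\nu\succeq_c \mu$ and, normalizing by the common mass $\|\mu\|=\|\nu\|$, reduce to probabilities. The plan is to exhibit a coupling $\gamma\in\Gamma(\mu,\nu)$ whose disintegration $\gamma=\mu\otimes\gamma^x$ satisfies $[\gamma^x]=x$ for $\mu$-a.e. $x$, since then $p^x:=\gamma^x$ fulfils both (i) and (ii). I would first handle the case of common compact support $X\subset\R^d$ and then pass to the general case by truncation and tightness.

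In the compact case, let $\mathcal{M}(\mu)\subset\PP(X)$ denote the set of second marginals of martingale couplings with first marginal $\mu$. This set is convex and weak-$*$ closed, the closedness following from the form of the martingale constraint $\iint \pairing{g(x), y-x}\, \gamma(dxdy) = 0$ for all $g\in C(X;\R^d)$. If $\nu\notin\mathcal{M}(\mu)$, Hahn--Banach separation in $C(X)^*$ produces $\varphi\in C(X)$ with $\int\varphi\, d\nu > \sup_{\lambda\in\mathcal{M}(\mu)}\int\varphi\, d\lambda$. A measurable-selection argument identifies the right-hand side with $\int\check\varphi\, d\mu$, where $\check\varphi(x):=\sup\{\int\varphi\, dp:p\in\PP(X),\,[p]=x\}$ is the concave envelope of $\varphi$ on $X$. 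Since $-\check\varphi$ is convex and upper semicontinuous, extending the convex-order inequality to such functions by monotone approximation gives $\int\check\varphi\, d\nu\le\int\check\varphi\, d\mu$; combined with $\check\varphi\ge\varphi$, this yields $\int\varphi\, d\nu\le\int\check\varphi\, d\mu$, a contradiction. Hence $\nu\in\mathcal{M}(\mu)$, and the required coupling exists.

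The main obstacle is the measurable selection of near-maximizing probabilities $p^x$ with $[p^x]=x$ needed for the identity $\sup_{\lambda\in\mathcal{M}(\mu)}\int\varphi\, d\lambda = \int\check\varphi\, d\mu$. I would handle it via Carath\'{e}odory's theorem (each maximizer is a convex combination of at most $d+1$ Diracs) and a Borel selection theorem applied to the upper semicontinuous multifunction $x\mapsto\{(\lambda_i,y_i)_{i=1}^{d+1}:\sum\lambda_i y_i = x,\ \sum\lambda_i\varphi(y_i)\ge\check\varphi(x)-\varepsilon\}$. For the extension from compact to general $\mu,\nu\in\PP_1(\R^d)$, I would truncate via push-forwards onto an increasing family of balls (for instance, barycentric reflections that preserve both the convex order and first moments), apply the compact case to produce couplings $\gamma_n$, and pass to a weak-$*$ limit; tightness is provided by the uniform first-moment bound, and the martingale constraint passes to the limit against bounded continuous test functions $g$.
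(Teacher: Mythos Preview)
The paper does not supply a proof of this statement: Strassen's theorem is quoted as a classical result with a reference to \cite{strassen1965}, and is used only as a black box (to guarantee that $MT(\mu,\nu)\neq\emptyset$ whenever $\nu\succeq_c\mu$). So there is no ``paper's own proof'' to compare against.

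That said, your sketch follows the standard separation-based route to Strassen's theorem and is essentially sound in the compactly supported case: the identification of $\sup_{\lambda\in\mathcal M(\mu)}\int\varphi\,d\lambda$ with $\int\check\varphi\,d\mu$ via measurable selection, followed by the contradiction $\int\varphi\,d\nu\le\int\check\varphi\,d\nu\le\int\check\varphi\,d\mu$, is exactly the classical argument. One small correction: $\check\varphi$ is upper semicontinuous (as an infimum of affine majorants), so $-\check\varphi$ is convex and \emph{lower} semicontinuous; the extension of the convex-order inequality to such functions is precisely the Moreau--Yosida approximation the paper alludes to below Definition~\ref{def:convexorder}.

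The weakest link is the passage from compact to general support. The phrase ``barycentric reflections that preserve both the convex order and first moments'' is not a recognizable construction, and naive truncations (e.g.\ conditioning on large balls, or radial projection) do \emph{not} in general preserve $\mu\preceq_c\nu$. A workable route is either to approximate $\mu,\nu$ in $W_1$ by finitely supported measures while maintaining the convex order (this can be done, but requires a specific construction such as the one in \cite{MulSca2006}), or to run the separation argument directly in $\PP_1(\R^d)$ by testing against Lipschitz functions and exploiting tightness of $\Gamma(\mu,\nu)$ in $W_1$. Either way, this step deserves more than the single sentence you give it.
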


 \noindent Some straightforward consequences of Strassen theorem for measures $\mu,\nu \in \PP_2(\R^d)$ are listed below:
 
 \begin{itemize}
 
 \item [(p1)] Assume that $\nu  \succeq_c \mu$. Then, $  \var(\nu)\ge \var(\mu)$, while  
strict inequality holds unless $\mu=\nu.$
Indeed, assuming that $[\mu]=0$, for $p^x$ such that $\nu= \int p^x \, \mu(dx)$ with $[p^x]=x$ \ $\mu$-a.e.,
 we have
 $$  \var(\nu) -\var(\mu)=  \int \big( \pairing{|\argu|^2,p^x} - |x|^2\big)\, \mu(dx),$$
 which is positive unless the Jensen's inequality $ \pairing{|\argu|^2,p^x} \ge |x|^2$ is an equality for $\mu$-a.e. $x$.  By the strict convexity of $|\cdot|^2$, this  is possible only if $p^x =\delta_x$, hence, if $\nu=\mu$.

 \item [(p2)] Assume that $[\nu]=0$, and take the convolution $\rho=\mu\star \nu$. Then, it holds that   $ \rho  \succeq_c \mu .$\
Indeed,  $\rho= \int p^x \, \mu(dx)$ where $p^x := (\ident+x)^\# \nu$ satisfies the condition $[p^x]=x$ ($\mathrm{id}$ is the identity map).
Thanks to this  property, one can check (see \cite{muller2001stochastic}) that, for centred Gaussian distributions $\rho,\mu$ on $\Rd$ with the respective covariance matrices  $R, M \in \Sddp $, the condition $\rho\succeq_c \mu$ reduces to the order relation $ R \geq M$ in the sense of quadratic forms.   

\end{itemize}

Finally, we {recall} that optimal transport  problems under the martingale constraint of the kind
$ \inf\left\{ \iint c(x,y) \, \gamma(dxdy)\, :\, \gamma\in MT(\mu,\nu)\right\} $
 are  considered  in the literature, most often for the cost $c(x,y) =|x-y|^p$ with $p\ge1$, see for instance \cite{ABC,beiglbock2013,Wiesel2023,gozlan,Ghoussoub}.
 A peculiarity of the quadratic cost $p=2$  is 
that, for $\nu  \succeq_c \mu$,   the infimum above is reached by any $\gamma\in MT(\mu,\nu)$
since the total cost remains constant and equal to $\var(\nu)-\var(\mu)$ on this subset. This fact will manifest itself in the proof of Theorem 1.3.

\bigskip
\section{Proofs} \label{proofs}
A quite technical direct proof of Theorem \ref{main-intro} could be derived  by leveraging the Le Gruyer's three-point characterization \eqref{3-jet}  of the  feasible set $\{u\in C^{1,1} : \Lip(\nabla u)\le 1\}$ (see   \cite{legruyer2009}  {and Lemma \ref{lipConv} below} for more details on this characterization).
However, as we aim to emphasize the important link between our initial problem and stochastic optimization under convex order dominance,  we choose here to deal first with the proof of Theorem \ref{thm2-intro}. After that, our main result in Theorem \ref{main-intro} and its Corollary \ref{coro1-intro} will follow smoothly. 

 In the whole section we assume that $\mu,\nu$ are centred probability measures in $\PP_2(\R^2)$. Fixing the zero barycentre $[\mu] = [\nu] = 0$ is not restrictive since  all the problems considered herein are translation invariant.

\subsection{Dualization of the minimal variance problem and optimality conditions}  Let us rewrite $\V(\mu,\nu)$  defined in  \eqref{min-var} in the form $\V(\mu,\nu) = \inf \big\{ \var(\rho) \, :\,  \rho \in\A(\mu,\nu)\big\}$ where,
 \begin{equation}\label{def:Amunu}
\A(\mu,\nu):= \Big\{ \rho\in \PP_2(\R^d) \ : \ \rho \succeq_c \mu, \ \ \rho \succeq_c \nu \Big\}.
\end{equation}
  By the properties (p1), (p2) that conclude Section \ref{MTsurv}, we know that
   $\rho=\mu\star\nu$ belongs to $\A(\mu,\nu)$, whence,
\begin{equation}\label{Vbounds}
 \max \big\{ \var(\mu) , \var(\nu) \big\} \ \le\  \V(\mu,\nu) \ \le\ \var(\mu) + \var(\nu) .
\end{equation}
Next, we consider the dual variational problem that involves pairs  $(\f,\psi)$ of convex functions.
    Let $\K$ be the set of convex functions that are in {$C^{1,1}(\Rd)$. Note that we have $\K\subset X_2(\R^d)$.  Then, we set 
\begin{equation}\label{2conv}
\V'(\mu,\nu) := \sup \left\{ \int \f\, d\mu + \int \psi\, d\nu \ :\ (\f,\psi)\in \FF\ \right\},
\end{equation}
where $\FF:=\big\{(\f,\psi)\in \K^2 \, :\, {\f(z)+\psi(z) \le \abs{z}^2 \ \ \forall\, z \in \Rd } \big\}$.
 \begin{proposition}\label{duality-2convex} \ There exists an optimal $\rho$ for \eqref{min-var}, and we have the no-gap equality,
 $$  \V(\mu,\nu)  = \V'(\mu,\nu) .$$
 Furthermore, $\rho\in\A(\mu,\nu) $ and $(\f,\psi)\in \FF$ are optimal for \eqref{min-var} and \eqref{2conv}, respectively,
if and only if the following optimality conditions are fulfilled:
	\begin{align}
		\label{eq:opt_cond_rho1}
		\begin{cases}
			(i) &  \varphi + \psi = \abs{\argu}^2 \quad \text{$\rho$-a.e.},\\
			(ii) & \int \varphi \, d\rho=  \int \varphi \, d\mu, \quad \int \psi \, d\rho=  \int \psi \, d\nu.
		\end{cases}
	\end{align}	
\end{proposition}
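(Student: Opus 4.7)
The plan is to carry out the proof in four linked steps: existence of an optimal $\rho$, weak duality $\V' \leq \V$, the no-gap equality $\V = \V'$, and the optimality characterization via complementary slackness. The main difficulty will lie in the third step, specifically the exchange of infimum and supremum together with a regularization of general convex test functions into the class $\K$ of convex $C^{1,1}$ functions.

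For existence, I would take a minimizing sequence $(\rho_n) \subset \A(\mu,\nu)$. The upper bound in \eqref{Vbounds} gives uniform second-moment control, hence tightness by Prokhorov's theorem; extract a narrowly convergent subsequence $\rho_n \weakstar \rho \in \PP_2(\Rd)$ (finite second moment being preserved by Fatou). The convex-order constraints $\rho_n \succeq_c \mu$ and $\rho_n \succeq_c \nu$ are stable under such convergence: any convex function $\varphi$ is the increasing limit of convex Lipschitz functions (by Moreau--Yosida infimal convolution), and for the latter the inequality $\int \varphi\, d\rho_n \geq \int \varphi\, d\mu$ passes to the limit thanks to narrow convergence plus the uniform first-moment bound; monotone convergence in $\varphi$ then concludes. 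Lower semicontinuity of the second moment gives $\var(\rho) \leq \V(\mu,\nu)$, so $\rho$ is optimal. The weak duality $\V' \leq \V$ is immediate: for any admissible $\rho$ and $(\varphi,\psi)\in\FF$,
$$ \int \varphi\, d\mu + \int \psi\, d\nu \;\leq\; \int (\varphi+\psi)\, d\rho \;\leq\; \int |x|^2\, d\rho \;=\; \var(\rho). $$

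The crux is the reverse inequality $\V \leq \V'$. I would encode the convex-order constraints as suprema, writing the indicator of $\A(\mu,\nu)$ as $\sup_{\varphi,\psi\text{ convex}} \bigl[\int\varphi\,d(\mu-\rho) + \int\psi\,d(\nu-\rho)\bigr]$, and then interchange $\inf_\rho$ with $\sup_{\varphi,\psi}$. This exchange can be justified by a Fenchel--Rockafellar argument in the dual pair $(X_{2,0}(\Rd), \Mes_2(\Rd))$, setting $A(\varphi,\psi) = \varphi+\psi$, $F(\varphi,\psi) = -\int\varphi\,d\mu - \int\psi\,d\nu$ (restricted to $\K\times\K$), and $G(g) = 0$ if $g \leq |\cdot|^2$ and $+\infty$ otherwise; the qualification hypothesis is satisfied because $(0,0)$ lies in the interior of the admissible set. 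After interchange, the inner infimum over probability measures collapses to $\inf_x (|x|^2 - \varphi(x) - \psi(x))$, which can be normalized to zero by redistributing a constant between $\varphi$ and $\psi$, thereby recovering the constraint $\varphi + \psi \leq |\cdot|^2$ defining $\FF$. A final step is needed to restrict from arbitrary convex test functions to the class $\K$: I would use the Moreau--Yosida regularization $\varphi^\e(x) = \inf_y \{\varphi(y) + \tfrac{1}{2\e}|x-y|^2\}$, which produces convex $C^{1,1}$ functions with $\varphi^\e \leq \varphi$ and $\varphi^\e \uparrow \varphi$; applied simultaneously to $\varphi$ and $\psi$, it preserves the pointwise constraint and, by monotone convergence against $\mu,\nu\in\PP_2$ (with affine lower bounds of convex functions handling integrability), $\int \varphi^\e\, d\mu + \int \psi^\e\, d\nu \to \int \varphi\, d\mu + \int \psi\, d\nu$.

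With $\V = \V'$ in hand, the optimality conditions come from complementary slackness. Joint optimality forces equality throughout the weak-duality chain: the equality $\int(\varphi+\psi)\,d\rho = \int|x|^2\,d\rho$ combined with the pointwise bound $\varphi+\psi \leq |\cdot|^2$ forces \textbf{(i)}, namely $\varphi+\psi = |\cdot|^2$ $\rho$-a.e., while the equality $\int\varphi\,d\mu + \int\psi\,d\nu = \int(\varphi+\psi)\,d\rho$, rewritten as $\int\varphi\,d(\rho-\mu) + \int\psi\,d(\rho-\nu) = 0$ with each summand nonnegative by convex order, yields \textbf{(ii)}. The converse is straightforward: (i) and (ii) together give $\int\varphi\,d\mu + \int\psi\,d\nu = \var(\rho)$, matching primal and dual optimal values. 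I expect the delicate point to be the clean setup of the Fenchel--Rockafellar machinery in a suitable function-space topology—especially since $\K$ is a cone rather than a vector space—and the verification of qualification there; the Moreau--Yosida step, though conceptually routine, is what allows us to stay inside the original problem formulation.
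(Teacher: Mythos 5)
Your proposal is correct in substance and follows the same overall skeleton as the paper: direct existence for \eqref{min-var} by compactness of a minimizing sequence plus stability of the convex-order constraints under testing with convex Lipschitz functions, the weak-duality chain \eqref{sup<inf}, a convex-duality argument in the pairing between $X_{2,0}(\Rd)$ and $\Mes_2(\R^d)$ for the no-gap equality, and complementary slackness for the equivalence with (i)--(ii). The differences are in execution, and it is worth recording how the paper sidesteps the two points you yourself flag as delicate. Rather than a Fenchel--Rockafellar setup with the operator $A(\f,\psi)=\f+\psi$ (which, as written, does not map into $X_{2,0}(\Rd)$, since sums of convex functions do not vanish at infinity, so the indicator $G$ and the ``interior point'' qualification are not well-posed on that space), the paper perturbs only the constraint: it keeps $(\f,\psi)\in\K^2$ as such and defines the value function $h(\chi)=\inf\{-\int\f\,d\mu-\int\psi\,d\nu : (\f,\psi)\in\K^2,\ \f+\psi+\chi\le|\argu|^2\}$ for $\chi\in X_{2,0}(\Rd)$; boundedness of $h$ on the unit ball (take $\f=\psi=-\tfrac12$) gives continuity at $0$, and Moreau's theorem yields $h(0)=h^{**}(0)=-\min h^*$. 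This device also removes any need for your Moreau--Yosida regularization step: since the competitors never leave $\K^2$, one never has to pass from general convex multipliers back into $\K$; instead, the work is shifted to computing $h^*$ on $\Mes_2(\R^d)$, where testing with Lipschitz pairs in $\K^2$ (constants, affine functions, and a truncation of $|\argu|^2-\f-\psi$ by $C_0$ functions together with the dominated-convergence criterium \eqref{domiCV}) shows that finiteness of $h^*(\rho)$ forces $\rho\in\A(\mu,\nu)$ and $h^*(\rho)=\var(\rho)$, which is exactly your ``collapse of the inner infimum'' made rigorous, including positivity and unit mass of $\rho$ which your Lagrangian sketch takes for granted. Your existence argument via Prokhorov and narrow convergence is an equivalent substitute for the paper's weak-* compactness in the $(X_{2,0},\Mes_2)$ duality, and your complementary-slackness derivation of (i)--(ii) coincides with the paper's. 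In short: same approach, with the paper's perturbation-function formulation being the clean way to implement the duality you outline; adopting it would close the only soft spots in your plan (space mismatch for $A$, qualification, and the collapse step).
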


\begin{remark}\label{existence?}\ The existence issue  for $\V'(\mu,\nu)$ is not trivial. In fact,  optimal pairs $(\f,\psi)\in \FF$ will be deduced from  the solutions to \eqref{secondorder} by means  of   Proposition \ref{LCP=2convex}
 in Section \ref{IversusW}.
 \end{remark}

\begin{proof}  We start by proving that \eqref{min-var} admits solutions. 
 By \eqref{Vbounds}, there exists a minimizing sequence $(\rho_n)$ in $\A(\mu,\nu)$ such that
$\var(\rho_n)\to \V(\mu,\nu) <+\infty$. Then, $(\rho_n)$ is bounded in $\Mes_2(\R^d)$ and, up to extracting a subsequence, 
we have $\rho_n\weakstar \rho$ in the duality between $X_{2,0}(\R^d)$ and  $\Mes_2(\R^d)$. Therefore, as $\rho_n\in\A(\mu,\nu)$, by passing to the limit $n\to\infty$, the convex order relations  $\int \varphi\, d\rho \ge \max\{\int \varphi\, d\mu, \int \varphi\, d\nu\}$ are deduced for convex Lipschitz continuous functions $\varphi$ since they belong to $X_{2,0}(\R^d)$.  As pointed out below Definition \ref{def:convexorder}, this is enough to ensure that $\rho\in\A(\mu,\nu)$. The optimality of $\rho$ follows 
since $\V(\mu,\nu)=\liminf_n \var(\rho_n)\ge \var(\rho)$. 

Next, we prove the equality $ \V(\mu,\nu)  = \V'(\mu,\nu)$. Notice that the inequality  $ \V(\mu,\nu) \ge\V'(\mu,\nu)$ is straightforward since for every admissible $\big(\rho,(\f,\psi)\big)$ we have,
\begin{equation}\label{sup<inf}
		\int \varphi \, d\mu + \int \psi \,d\nu \leq  \int \varphi \, d\rho + \int \psi \,d\rho \leq \int \abs{\argu}^2 d\rho.
	\end{equation}
To show the opposite inequality, we introduce the perturbation function $h:X_{2,0}(\R^d) \to \R \cup \{+\infty\}$ defined by,
$$  h(\chi)  := \inf\left\{  -\left(\int \f\, d\mu + \int \psi\, d\nu\right) \, :\, (\f,\psi)\in \K^2 , \ \ \f +\psi + \chi \le |\cdot|^2 \right\}.$$	
We see that $h(0) = - \V'(\mu,\nu)$ is finite, while the function $h$ is convex. Moreover, by taking $\f=\psi= -\frac1{2}$ as a competitor, we have $  h(\chi) \le 1$  whenever \  $\chi\le 1 +|\cdot|^2  .$
Thus, $h$ has a finite upper bound on the unit ball in the Banach space $X_{2,0}(\R^d)$. Therefore, $h$ is continuous at $0$ and, by Appendix \ref{app:convex}, it holds that
$h(0)= h^{**}(0) = - \min h^*$, where $h^*$ denotes the Fenchel conjugate of $h$ on the dual space $\Mes_2(\R^d)$.
The asserted equality will follow if we can prove that,
\begin{equation}\label{claim:h^*}
 h^*(\rho)= \var(\rho)  \quad \text{if\quad$\rho\in\A(\mu,\nu)$}, \qquad  h^*(\rho) = +\infty  \quad \text{otherwise}.
 \end{equation}
Let us compute,
\begin{align*}  h^*(\rho) =  \sup_{\substack{(\f,\psi)\in \K^2 \\ \chi\in X_{2,0}(\R^d)}}  \left\{ \int \chi\, d\rho + \int \f\, d\mu + \int \psi\, d\nu \ :\   \f +\psi + \chi \le |\cdot|^2 \right\}.
\end{align*}
Clearly, one has $h^*(\rho) \le \var(\rho) = \int \abs{z}^2 \rho(dz)$ if $\rho\in\A(\mu,\nu)$, and $h^*(\rho) = +\infty$ when $\rho$ is not positive. Assuming now that $\rho \geq 0$, we look for the lower bound for $h^*$ by restricting the supremum above to pairs $(\f,\psi)\in \K^2$ which are Lipschitz continuous. Fixing such a pair, we see that the function $\ov \chi(z):= |z|^2 -\f(z)-\psi(z)$ belongs to $X_2(\R^d)$ and it is positive for large $|z|$.  By truncation, it  can be approximated by a sequence $\chi_n \in \C_0(\R^d)$ such that $\chi_n\to \ov \chi$ increasingly, and $\sup_n \|\chi_n\|_{X_2(\R^d)} <+\infty$. Since $\chi_n+\f+\psi\le |\cdot|^2$,
 after certain manipulations we are led to\footnote{All integrals involved below are finite since Lipschitz continuous functions belong to $X_2(\R^d)$.},
$$h^*(\rho) \ge \int (\chi_n + \f+\psi)\, d\rho + \left(\int \f\, d\mu - \int \f \, d\rho\right) + \left(\int \psi \, d\nu - \int \psi \, d\rho\right).$$
Then, passing to the limit as $n\to\infty$ (see \eqref{domiCV}), we get the inequality,
 $$h^*(\rho) \ge \int |z|^2\, d\rho +  \left(\int \f\, d\mu - \int \f \, d\rho\right) + \left(\int \psi \, d\nu - \int \f \, d\rho\right) ,$$
which holds true for every pair of convex Lipschitz continuous functions $(\f,\psi)$. Therefore, the finiteness of $h^*(\rho)$ implies that $\rho$ dominates $\mu$ and $\nu$ in the convex order. In this case, we infer that $\rho\in\A(\mu,\nu)$ (in particular $[\rho] = 0$), while   $h^*(\rho)\ge  \int |z|^2\, d\rho =\var(\rho)$.
 This proves our claim \eqref{claim:h^*}, hence the equality $\V(\mu,\nu)=\V'(\mu,\nu)$. 
 
 We see now that  a pair $\big(\rho,(\f,\psi)\big) \in A(\mu,\nu)\times \FF$ is optimal if and only if  the inequalities in \eqref{sup<inf} are  equalities. In turn, these equalities are equivalent to the conditions (i),\,(ii) stated in Proposition \ref{duality-2convex}.
 \end{proof}

\subsection{ Convex-order characterization of the admissible set $\Sigma(\mu,\nu)$} The proof of Theorem \ref{thm2-intro} will rely on the relation between the admissible subset $\Sigma(\mu,\nu)$
for the  optimal transport problem \eqref{OT3} and the admissible subset $\A(\mu,\nu)$ for \eqref{min-var}.
This relation is illuminated by the following result:
\begin{lemma}\label{Sigmamunu=Amunu} Let $\pi\in \PP_2((\Rd)^3)$ be a $3$-plan with marginals $(\mu,\nu,\rho)$. Define the marginals $\pi_{1,3} : =\Pi_{1,3}^\#(\pi)$ and $\pi_{2,3} :=\Pi_{2,3}^\#(\pi)$, which are the push forwards of $\pi(dxdydz)$ through the projection maps $(x,y,z)\to (x,z)$ and $(x,y,z)\to (y,z)$, respectively.  
Then,
\begin{equation}\label{equival}
\pi\in \Sigma(\mu,\nu) \quad \Leftrightarrow \quad  \begin{cases} \pi_{1,3} \in MT(\mu,\rho), \\
  \pi_{2,3} \in MT(\nu,\rho). \end{cases}
\end{equation}
Accordingly, we obtain the equality,
\begin{equation}\label{Sigma=A}
\A(\mu,\nu) = \left\{ \rho \in \PP_2(\R^d)\ :\ \exists\, \pi\in  \Sigma(\mu,\nu), \ \ \Pi_3^\#(\pi) =\rho \right\} .
 \end{equation}
  
\end{lemma}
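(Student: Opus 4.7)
The plan is to handle the two claims separately: first establish the equivalence \eqref{equival} by a direct computation on disintegrations, then derive \eqref{Sigma=A} by combining \eqref{equival} with Strassen's theorem and a gluing construction.

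For \eqref{equival}, I would observe that requiring $\pi$ to admit marginals $(\mu,\nu,\rho)$ already forces $\pi_{1,3}\in\Gamma(\mu,\rho)$ and $\pi_{2,3}\in\Gamma(\nu,\rho)$, so the substance of the equivalence lies in identifying the two integral constraints \eqref{equilibrium} with the martingale property of the disintegrations. Since the integrand $\pairing{z-x,\Phi(x)}$ depends only on $(x,z)$, a Fubini-type reduction gives
\begin{equation*}
\iiint \pairing{z-x,\Phi(x)}\,\pi(dxdydz) = \iint \pairing{z-x,\Phi(x)}\,\pi_{1,3}(dxdz),
\end{equation*}
and analogously for the condition involving $\Psi$. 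Disintegrating $\pi_{1,3}=\mu\otimes\gamma^x$ with $\gamma^x\in\PP_1(\Rd)$, the vanishing of the right-hand side for every smooth $\Phi:\Rd\to\Rd$ is equivalent to $\int(z-x)\,\gamma^x(dz)=0$ for $\mu$-a.e. $x$, i.e. $[\gamma^x]=x$, which is precisely the martingale condition defining $MT(\mu,\rho)$. A standard mollification plus truncation argument, exploiting the finite second-order moment of $\pi$, allows one to pass between smooth and bounded Borel test fields; this is the only mildly delicate point, and the step I expect to carry out most carefully.

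Once \eqref{equival} is established, the identity \eqref{Sigma=A} follows in two short steps. For the inclusion $\supseteq$, if $\pi\in\Sigma(\mu,\nu)$ has third marginal $\rho$, then by \eqref{equival} one has $\pi_{1,3}\in MT(\mu,\rho)$, and Jensen's inequality applied to its disintegration yields $\int\varphi\,d\rho\ge\int\varphi\,d\mu$ for every convex $\varphi$; the symmetric reasoning via $\pi_{2,3}$ gives $\rho\succeq_c\nu$, whence $\rho\in\A(\mu,\nu)$.

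For the converse inclusion, given $\rho\in\A(\mu,\nu)$, the Strassen theorem recalled in Subsection \ref{MTsurv} produces two martingale transports $\gamma_1\in MT(\mu,\rho)$ and $\gamma_2\in MT(\nu,\rho)$. I would then glue them along the common marginal $\rho$: disintegrating $\gamma_1=\rho\otimes\alpha^z$ and $\gamma_2=\rho\otimes\beta^z$, the candidate 3-plan
\begin{equation*}
\pi(dxdydz) := \alpha^z(dx)\,\beta^z(dy)\,\rho(dz)
\end{equation*}
clearly has marginals $(\mu,\nu,\rho)$ and satisfies $\pi_{1,3}=\gamma_1$, $\pi_{2,3}=\gamma_2$; applying \eqref{equival} in its reverse direction confirms $\pi\in\Sigma(\mu,\nu)$, so $\rho$ belongs to the right-hand side of \eqref{Sigma=A}. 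The argument is thus quite streamlined once \eqref{equival} is in place, and the whole proof essentially reduces to a bookkeeping of marginals combined with the by-now classical Strassen-plus-gluing toolbox.
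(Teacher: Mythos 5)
Your proposal is correct and follows essentially the same route as the paper: the Fubini-type reduction to $\pi_{1,3}$, $\pi_{2,3}$ and the disintegration identifying the constraints \eqref{equilibrium} with the barycentre condition $[\gamma^x]=x$, followed by Strassen's theorem and the product-coupling gluing along the common marginal $\rho$ for \eqref{Sigma=A}. The only cosmetic difference is that you verify $\rho\succeq_c\mu$, $\rho\succeq_c\nu$ directly via Jensen's inequality on the disintegrations, whereas the paper simply quotes the equivalence between convex order and nonemptiness of $MT$; both are fine.
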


\begin{proof} Upon recalling the equilibrium conditions \eqref{equilibrium} which characterize the convex subset $\Sigma(\mu,\nu)$,
checking the equivalence  \eqref{equival} amounts to verifying  the two equivalences:
\begin{align*}
	 &\iiint  \pairing{z-x, \Phi(x)} \, \pi(dxdydz) = 0\quad \forall\, \Phi\in C_0(\R^d;\R^d) \ \   \Leftrightarrow \ \    \pi_{1,3}\in MT(\mu,\rho), \\
	 & \iiint  \pairing{z-y, \Psi(y)} \, \pi(dxdydz) = 0 \quad  \forall\, \Psi\in C_0(\R^d;\R^d) \ \   \Leftrightarrow \ \    \pi_{2,3}\in MT(\nu,\rho) .
\end{align*}
Let us prove the first equivalence; the proof of the second one is similar and will be skipped. We consider the disintegration $\pi_{1,3}(dxdz) = \mu(dx) \otimes p^x(dz)$, which gives,
\begin{align*} \iiint  \pairing{z-x, \Phi(x)} \, \pi(dxdydz) &= \iint \pairing{z-x, \Phi(x)}\, \pi_{1,3}(dxdz) \\
&=  \int \left(\int\pairing{z-x, \Phi(x)}\, p^x(dz)\right)  \mu(dx) \\ &= \int \pairing{[p^x]-x, \Phi(x)} \, \mu(dx) .
\end{align*}  
Clearly, these integrals vanish for every $\Phi\in C_0(\R^d;\R^d)$ if and only if $[p^x]=x$ holds $\mu$-a.e. This is exactly the martingale condition that characterizes $\pi_{1,3}\in MT(\mu,\rho)$.

Let us now prove the equality  \eqref{Sigma=A}. 
By \eqref{equival}, the condition $\pi\in \Sigma(\mu,\nu)$ implies that $\rho\in\A(\mu,\nu)$. Conversely, if
$\rho \succeq_c \mu$ and $\rho \succeq_c \nu$, Strassen theorem ensures  the existence of martingale transports 
$\gamma_{1,3}\in MT(\mu,\rho)$ and $ \gamma_{2,3}\in MT(\nu,\rho)$. Then, we can recover an element $\pi\in \Sigma(\mu,\nu)$
with $\rho$ for its third marginal by using a gluing construction between $\gamma_{1,3}$ and $\gamma_{2,3}$. A simple one (it is not unique)  is as follows. Let us  consider the disintegrations of the measures $\gamma_{i,3} $ ($i\in\{1,2\}$) with respect to their second  marginal $\rho$. This gives $\rho$-measurable families $\{p_i^z\}$ in $\PP(\R^d)$ such that,
\begin{align*}
	\gamma_{1,3}(dxdz) &= \int \big( p_1^z(dx) \otimes \delta_\xi(dz)\big) \, \rho(d\xi), \\
	\gamma_{2,3}(dydz) &= \int  \big(p_2^z(dy) \otimes \delta_\xi(dz)\big) \,  \rho(d\xi).
\end{align*}
 Then, it is easy to check that the measure  $\pi(dxdydz)= \int  \big(p_1^z(dx)\otimes p_2^z(dy)\big)\otimes \delta_\xi(dz)\, \rho(d\xi)$ has $(\mu,\nu,\rho)$ for its marginals, and it satisfies {$\pi_{i,3}:=\Pi_{i,3}^\#(\pi) = \gamma_{i,3}$}.  
\end{proof}

\subsection{Proof of  the equality $\V'(\mu,\nu)= \I(\nu-\mu) + \frac1{2} \big(\var (\mu)+\var(\nu)\big)$} \label{IversusW}
{First, we expound the links  between the bounds on the Hessian, the convexity properties, and the three-point inequality \eqref{3-jet}.}
\begin{lemma}
	\label{lipConv}
	For any continuous function $u: \Rd \to \R$ the following conditions are equivalent:
	\begin{enumerate}[label={(\roman*)}] 
		\item $u \in C^{1,1}(\Rd)$ and $\Lip(\nabla u) \leq 1$;
		\label{phi_C11}
		\item \label{hessian} $u \in W^{2,\infty}_{\mathrm{loc}}(\Rd)$ and $-\mathrm{Id} \leq \nabla^2 u \leq \mathrm{Id}$ a.e. in $\Rd$; 
		\item \label{both_convex} both functions $ \frac{1}{2}\abs{\argu}^2 +u$ and $\frac{1}{2}\abs{\argu}^2 - u$ are convex;
		\item there exists a continuous function $U:\Rd \to\Rd$ such that, 
		\begin{align}\label{ineq-3points}
			[u(y) + \pairing{U(y), z-y}]  - [u(x) + \pairing{U(x), z-x} ]
			\le c(&x,y,z) \\
			\nonumber
			&\forall\, (x,y, z) \in (\Rd)^3 .
		\end{align}
	\end{enumerate}
	Moreover, whenever (iv) holds true, $U = \nabla u$.
\end{lemma}
\begin{proof}
	The equivalences (i) $\Leftrightarrow$ (ii) $\Leftrightarrow$ (iii) are classical, whilst the equivalence with (iv) was established in \cite{legruyer2009}. For the reader's convenience, we shall provide an alternative proof of the latter. For a given $u$, we introduce the pair of functions,
	\begin{equation}
		\label{phipsi}
		\varphi:= \frac1{2} \abs{\argu}^2 - u, \qquad   \psi:= \frac1{2} \abs{\argu}^2 + u.
	\end{equation}
	
	Assume first that $u$ satisfies (i),\,(ii),\,(iii), which makes $\varphi,\psi$ convex and smooth. We can define two bivariate functions
	$  {\tilde \f}(x,z) := \f(x) + \pairing{ \nabla\f(x), z-x}$ and 
	${\tilde \psi}(y,z) := \psi(y) + \pairing{ \nabla\psi(y), z-y}.$
	By the convexity, we have  $  {\tilde \f}(x,z) \leq \f(z)$ and $ {\tilde \psi}(y,z) \leq \psi(z)$. Since $\varphi + \psi = \abs{\argu}^2$, for every $(x,y,z)\in (\R^d)^3 $ the inequality follows,
		\begin{align*}
		\f(x) + \pairing{ \nabla\f(x), z-x} +  \psi(y) + \pairing{ \nabla\psi(y), z-y} &\, \le \, |z|^2.
	\end{align*}
	Upon changing the variables back to $u$ and recalling that $c(x,y,z)= \frac{1}{2} ( \abs{z-x}^2 + \abs{z-y}^2 )$, simple manipulations lead to \eqref{ineq-3points} for $U =\nabla u$.
	
	Conversely, assume that $u$ satisfies (iv) for some $U$. Then, performing the same manipulations backwards from \eqref{ineq-3points},  we get
		\begin{align*}
		\f(x) + \pairing{ \Phi(x), z-x} +  \psi(y) + \pairing{ \Psi(y), z-y} &\, \le \, |z|^2,
	\end{align*}
	where  $(\Phi,\Psi):=( \ident - U, \ident + U)$.
	We define the convex functions,
	\begin{equation*}
		\check{\f}(z): = \sup_{x \in \Rd} \Big\{	\f(x) + \pairing{ \Phi(x), z-x}  \Big\}, \quad 	\check{\psi}(z) := \sup_{y \in \Rd} \Big\{	\psi(y) + \pairing{ \Psi(y), z-y}  \Big\}.
	\end{equation*}
	It is clear that $\check{\f}+\check{\psi} \leq \abs{\argu}^2$ and ${\f} \leq \check\f$, ${\psi} \leq \check\psi$. Since $\f +\psi = \abs{\argu}^2$, we deduce that ${\f} = \check\f$, ${\psi} = \check\psi$, which renders $\varphi,\psi$ convex with $\Phi,\Psi$ as the respective subgradients. Recalling \eqref{phipsi}, we deduce (iii) and thus also (i),\,(ii),  whence the smoothness of $u, \varphi,\psi$.  Finally, we see that $U = \Psi - \ident = \nabla \psi - \ident = \nabla u$.
\end{proof}
	\color{black}

Next, we consider the subclass $\G\subset \C^0(\R^d)$ consisting of  continuous functions $\f$ such that $|\argu|^2 -\f$ admits an affine minorant. Note that $\f\in\G$ if and only if $\f(x) \le \abs{x-x_0}^2 + b $ for a suitable pair $(x_0,b)\in \R^d\times \R_+$.
Then, we introduce the transform $\LL : \f\in \G\to \hat\f\in\G$ defined by,
\begin{equation} \label{hatfi}
	\LL \f = \hat \f \quad \text{where}\quad \hat\f (x) := \abs{x}^2  -(\abs{\argu}^2-\varphi)^{**}(x) .
\end{equation}
Below we show that $\LL$ preserves convexity while adding smoothness. This is a rephrase of the fact that taking convex envelope preserves semi-concavity. The following result essentially reproduces Theorem 2.3 in \cite{azagra2018}.
\begin{lemma} \label{fundamental}
	\label{lem:LL} The transform $\LL$ enjoys the following properties:
	\begin{enumerate}[label={(\roman*)}] 
		\item $\LL \f \ge \f$, while $\LL\f \equiv  \f$ if and only if \	$\abs{\argu}^2-\varphi$ is convex;
		\item $\LL \circ \LL =\LL$ (idempotence);
		\item If $\f$ is convex, then   $\hat\f:= \LL \f$ is convex and $C^{1,1}$, whilst $u:= \frac{1}{2} \abs{\argu}^2 -\hat\f$ satisfies $\Lip(\nabla u)\le 1$.
	\end{enumerate}
\end{lemma}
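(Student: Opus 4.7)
I would set $g := \abs{\argu}^2 - \varphi$, so by the very definition of $\LL$ we have $\hat\varphi = \abs{\argu}^2 - g^{**}$. Since $\varphi\in\G$, the function $g$ admits an affine minorant; hence the biconjugate $g^{**}$ is proper and coincides with the convex envelope of $g$ (the largest convex lower-semicontinuous function $\le g$). All three assertions will be read off the interplay between $g$ and $g^{**}$.

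Parts (i) and (ii) are then essentially tautological. For (i), the pointwise inequality $g^{**}\le g$ immediately gives $\hat\varphi = \abs{\argu}^2 - g^{**}\ge \abs{\argu}^2 - g = \varphi$, with equality everywhere if and only if $g^{**}\equiv g$, i.e. (since $g$ is continuous) $\abs{\argu}^2-\varphi$ is convex. For (ii), observe that $\abs{\argu}^2 - \LL\varphi = g^{**}$ is already convex and lower-semicontinuous, so applying $\LL$ a second time only yields $(g^{**})^{**} = g^{**}$, whence $\LL(\LL\varphi)=\LL\varphi$.

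The heart of the matter, and the main obstacle, is (iii). The plan is to exploit \emph{semi-concavity}: when $\varphi$ is convex, $g-\abs{\argu}^2 = -\varphi$ is concave, so $g$ is $2$-semi-concave. I would then invoke the classical smoothing-by-convexification result (Theorem 2.3 in \cite{azagra2018}, in the spirit of \cite{cependello1998,dephilippis2015}), which asserts that the convex envelope of a semi-concave function inherits the same semi-concavity constant and is automatically of class $C^{1,1}$. Applied to $g$, this yields $g^{**}\in C^{1,1}$ together with the two-sided bound $0\le \nabla^2 g^{**}\le 2\,\Ident$ almost everywhere, the lower bound coming from convexity of $g^{**}$ and the upper bound from the inherited semi-concavity $g^{**}-\abs{\argu}^2$ concave. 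Substituting, $\hat\varphi = \abs{\argu}^2 - g^{**}$ is convex and $C^{1,1}$ with $0\le \nabla^2\hat\varphi \le 2\,\Ident$, while $u = \tfrac{1}{2}\abs{\argu}^2 - \hat\varphi = g^{**} - \tfrac{1}{2}\abs{\argu}^2$ has weak Hessian in $[-\Ident,\Ident]$. By the equivalence \ref{hessian}$\Leftrightarrow$\ref{phi_C11} in Lemma \ref{lipConv}, this last bound is exactly $\Lip(\nabla u)\le 1$, finishing (iii). Apart from this one invocation of the semi-concavity/convex-envelope regularity theorem, the whole argument is just bookkeeping with Fenchel conjugates.
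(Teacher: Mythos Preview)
Your treatment of (i) and (ii) matches the paper's: both are declared ``straightforward'' there, and your bookkeeping with $g^{**}\le g$ and $(g^{**})^{**}=g^{**}$ is exactly the content of that remark.

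For (iii), your argument is correct but the route differs from the paper's. You invoke the smoothing-by-convexification theorem (semi-concavity of $g$ is inherited by $g^{**}$, which is then automatically $C^{1,1}$) as a black box, citing \cite{azagra2018}. The paper, by contrast, chooses to give a self-contained proof of the key step---the convexity of $\hat\varphi$---by verifying Jensen's inequality directly: for any centred, finitely supported probability $p_0$ and any $z$, it shows
\[
\int (\abs{\argu}^2-\varphi)^{**}(z+\xi)\,p_0(d\xi)\ \le\ (\abs{\argu}^2-\varphi)^{**}(z) + \var(p_0)
\]
via an $\varepsilon$-approximation of the convex envelope at $z$, Fubini, and the convexity of $\varphi$. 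Once convexity of $\hat\varphi$ is in hand, both routes finish identically through Lemma~\ref{lipConv}. The paper itself remarks just before the lemma that the result ``essentially reproduces Theorem~2.3 in \cite{azagra2018}'', so your choice to cite it is entirely natural; what you lose is self-containedness (the lemma is labelled \texttt{fundamental} for a reason), and what you gain is brevity.
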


\begin{proof} The first two properties are straightforward.  In order to show that $\hat \f$ is convex, we need only to check the Jensen inequality $ \int \hat \f(z+\xi) \,p_0(d\xi) \ge  \hat \f(z)$ for every centred  finitely supported
	probability  $p_0$ and for any $z\in\R^d$. In view of the particular form of $\hat \f$ given in \eqref{hatfi}, this amounts to showing that,
	\begin{equation}\label{claim:Jensen}
		\int (\abs{\argu}^2-\varphi)^{**}(z+\xi)\, p_0(d\xi)\ \le\ (\abs{\argu}^2-\varphi)^{**}(z)  + \var(p_0).
	\end{equation}
	To prove \eqref{claim:Jensen}, we fix $\e>0$ and  choose a finitely supported probability $p^z$ such that $[p^z] = z$ and, $$ (\abs{\argu}^2-\varphi)^{**}(z) \geq \int \big( \abs{\zeta}^2 - \varphi(\zeta) \big) \, p^z(d\zeta) - \eps . $$
	Then, by applying  Jensen's inequality to  $(\abs{\argu}^2-\varphi)^{**}$ (which is majorized by $\abs{\argu}^2-\varphi$), we infer that for every $\xi\in \Rd$ we have,
	\begin{align*}\label{epsbound}
		(\abs{\argu}^2-&\varphi)^{**}(z+\xi)- (\abs{\argu}^2-\varphi)^{**}(z) \\
		&\leq 
		\int   \left( \big( \abs{\zeta+\xi}^2 - \varphi(\zeta+\xi) \big)-  \big( \abs{\zeta}^2 - \varphi(\zeta) \big)\right)  p^z(d\zeta)+\eps \\
		&=  \abs{\xi}^2 - 2 \pairing{z,\xi} - \int \big(\varphi(\zeta+\xi) - \varphi(\zeta)\big)\, p^z(d\zeta) + \,\eps\,  .
	\end{align*}
	By integrating  with respect to the centred measure $p_0(d\xi)$ and by  Fubini theorem, we deduce that,
	\begin{align*}
		\int (\abs{\argu}^2-&\varphi)^{**}(z+\xi)\, p_0(d\xi) -  (\abs{\argu}^2-\varphi)^{**}(z)\\
		& \leq \var(p_0) + \eps  - \iint \big(\varphi(\zeta+\xi) - \varphi(\zeta)\big)\, p^z(d\zeta) \otimes p_0(d\xi)  \\
		& =  \var(p_0) + \eps - \int \left(\int \big(\varphi(\zeta+\xi) - \varphi(\zeta)\big)\,p_0(d\xi)\right)   p^z(d\zeta) \\
		& \le  \var(p_0)  + \eps .
	\end{align*}
	Let us point out that, in order to reach  the last line above, we used  the convexity of  $\varphi$, which, by Jensen's inequality,  renders the integral with respect to $p_0(d\xi)$ non-negative. Since $\eps$ can be chosen arbitrarily small, we get our claim \eqref{claim:Jensen}, hence the convexity of $\hat \f$.  
	
	To complete the proof of the assertion (iii), we observe 
	that the function $u=\frac{1}{2} \abs{\argu}^2 -\hat\f$ is such that $\frac{1}{2} \abs{\argu}^2- u= \hat\f$ and
	$\frac{1}{2} \abs{\argu}^2+ u=(\abs{\argu}^2-\varphi)^{**}$, which are convex functions. By virtue of Lemma \ref{lipConv}, it follows that $u$ (hence also  $\hat\f$) is $C^{1,1}$, and there also holds  $\Lip(\nabla u)\le 1.$ 
\end{proof}

\begin{proposition}\label{LCP=2convex} \ Let $\ov u$ be a solution to \eqref{secondorder}. Then, the pair of convex functions
	$(\ov\f,\ov\psi)$ given by,
	\begin{equation}\label{ufpsi}
		\ov\f=\frac{1}{2} \abs{\argu}^2 -\ov u, \qquad \ov\psi= \frac{1}{2} \abs{\argu}^2 +\ov u 
	\end{equation}
	solves the maximization problem  \eqref{2conv}.
	Accordingly, we have the equality,
	$$  \I(\nu-\mu) + \frac1{2} \big(\var(\mu)+\var(\nu)\big) = \V'(\mu,\nu).$$
\end{proposition}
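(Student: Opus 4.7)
The plan is to prove the equality $\V'(\mu,\nu) = \I(\nu-\mu) + \tfrac{1}{2}\bigl(\var(\mu)+\var(\nu)\bigr)$ by two inequalities, after which the optimality of $(\ov\f,\ov\psi)$ will follow immediately since the easy direction exhibits this pair as attaining the right-hand side. For the easy direction $\V'(\mu,\nu)\ge \I(\nu-\mu)+\tfrac{1}{2}\bigl(\var(\mu)+\var(\nu)\bigr)$, I would take a maximizer $\ov u$ of \eqref{secondorder}, furnished by Proposition \ref{nogap-classic}, and verify that $(\ov\f,\ov\psi)$ defined in \eqref{ufpsi} is admissible in \eqref{2conv}: both functions are convex by the equivalence (i)$\Leftrightarrow$(iii) of Lemma \ref{lipConv}, both are $C^{1,1}$ (since $\ov u$ and $\tfrac{1}{2}|\cdot|^2$ are), and $\ov\f+\ov\psi=|\cdot|^2$ holds identically by construction. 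Using $[\mu]=[\nu]=0$, so that $\int|x|^2\mu(dx)=\var(\mu)$ and similarly for $\nu$, a direct expansion yields
\begin{equation*}
\int \ov\f\, d\mu + \int \ov\psi\, d\nu = \tfrac{1}{2}\bigl(\var(\mu)+\var(\nu)\bigr) + \I(\nu-\mu).
\end{equation*}

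For the reverse inequality, the strategy is to lift an arbitrary admissible $(\f,\psi)\in\FF$ to one that saturates the constraint $\f+\psi=|\cdot|^2$ by means of the transform $\LL$ from \eqref{hatfi}. Since $\K\subset X_2(\Rd)$, every $\f\in\K$ lies in $\G$, so $\LL\f$ is defined. Set $\hat\f:=\LL\f$ and $\hat\psi:=|\cdot|^2-\hat\f=(|\cdot|^2-\f)^{**}$. By Lemma \ref{fundamental}(iii), $\hat\f$ is convex and $C^{1,1}$, hence so is $\hat\psi$. Lemma \ref{fundamental}(i) gives $\hat\f\ge\f$; moreover, $\psi$ is itself a convex minorant of $|\cdot|^2-\f$, so the maximality of the biconjugate among convex minorants yields $\psi\le(|\cdot|^2-\f)^{**}=\hat\psi$. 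Consequently $(\hat\f,\hat\psi)\in\FF$, saturates $\hat\f+\hat\psi=|\cdot|^2$ identically, and dominates $(\f,\psi)$ in both integrals.

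Defining $u:=\tfrac{1}{2}|\cdot|^2-\hat\f=\tfrac{1}{2}(\hat\psi-\hat\f)$, Lemma \ref{fundamental}(iii) grants $\Lip(\nabla u)\le 1$, so $u$ is admissible in \eqref{secondorder}. Reusing the computation of the first paragraph,
\begin{align*}
\int\f\, d\mu + \int\psi\, d\nu &\le \int\hat\f\, d\mu + \int\hat\psi\, d\nu \\
&= \tfrac{1}{2}\bigl(\var(\mu)+\var(\nu)\bigr) + \int u\, d\nu - \int u\, d\mu \\
&\le \tfrac{1}{2}\bigl(\var(\mu)+\var(\nu)\bigr) + \I(\nu-\mu),
\end{align*}
and passing to the supremum over $(\f,\psi)\in\FF$ closes the chain of equalities. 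The main obstacle is precisely this lifting step: one must simultaneously preserve convexity with $C^{1,1}$ regularity when replacing $\f$ by $\hat\f$, and show that the induced $\hat\psi:=|\cdot|^2-\hat\f$ dominates the original $\psi$. This is the substance of Lemma \ref{fundamental}, whose proof hinges on a Jensen-type inequality for the biconjugate and is the only genuinely non-trivial ingredient in the argument.
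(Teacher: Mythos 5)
Your argument is correct and follows essentially the same route as the paper: admissibility of $(\ov\f,\ov\psi)$ via Lemma \ref{lipConv} for the easy inequality, and the $\LL$-transform of Lemma \ref{fundamental} to convert an arbitrary admissible $(\f,\psi)\in\FF$ into a competitor $u=\tfrac{1}{2}\abs{\argu}^2-\LL\f$ for \eqref{secondorder} in the converse direction, followed by taking the supremum. One small slip: your justification that $\LL\f$ is defined, namely ``since $\K\subset X_2(\Rd)$, every $\f\in\K$ lies in $\G$'', is false as stated (e.g.\ $\f=10\abs{\argu}^2$ is convex and $C^{1,1}$, yet $\abs{\argu}^2-\f$ admits no affine minorant); what is true, and all you need, is that any $\f$ occurring in an admissible pair $(\f,\psi)\in\FF$ lies in $\G$, because the real-valued convex function $\psi$ admits an affine minorant $\ell$ and the constraint gives $\f\le\abs{\argu}^2-\psi\le\abs{\argu}^2-\ell$ --- which is exactly how the paper argues this point before applying Lemma \ref{fundamental}.
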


\begin{proof}  Since $\ov u$ is $C^{1,1}$ with $\Lip(\nabla \ov u)\le 1$,  the pair of  functions
	$(\ov\f,\ov\psi)$  given by \eqref{ufpsi} belongs to the class $\FF$ of admissible competitors for \eqref{2conv}, thanks to the equivalence stated in  Lemma \ref{lipConv}. 
	Therefore,
	\begin{align*}
		\I(\nu-\mu) &=  \int \ov u\, d\nu -  \int \ov u\, d\mu\\
		& =   \int \ov\f\, d\mu + \int \ov\psi\, d\nu -\tfrac{\var(\mu)+\var(\nu)}{2}\\
		&\le  \V'(\mu,\nu) -\tfrac{\var(\mu)+\var(\nu)}{2} .
	\end{align*}
	Thus, we are done if we can prove the converse inequality, namely  
	\begin{equation}\label{claim:W<I}
		\V'(\mu,\nu) \le\,  \I(\nu-\mu) + \frac1{2} \big(\var(\mu)+\var(\nu)\big) .
	\end{equation}
	Let $(\f,\psi)\in \FF$ be  any admissible pair for \eqref{2conv}. Since the  convex continuous function  $\psi$  admits an affine minorant,  the inequality $\psi\le |\cdot|^2 -\f$ implies that $\psi=\psi^{**} \le ( |\cdot|^2 -\f)^{**}$, while $\f$ belongs  
	to the subclass $\G$ on which the $\LL$-transform is well defined. By virtue of Lemma \ref{fundamental},  
	$\hat \f:=\LL \f$  is convex and satisfies $\hat\f \ge \f$. Therefore, it holds that,
	$$\int \f \, d\mu + \int \psi\, d\nu \, \le \int \hat \f \, d\mu + \int ( |\cdot|^2 -\f)^{**}\, d\nu\, =\int \hat \f \, d\mu 
	+\int   (|\cdot|^2 -\hat\f)\, d\nu . $$
	In terms of $u:= \frac1{2} |\cdot|^2 - \hat\f$, the latter inequality can be rewritten as follows,
	$$  \int \f \, d\mu + \int \psi\, d\nu\,  \le  \int u\, d\nu -  \int u\, d\mu \, +\,  \frac1{2} \big(\var(\mu)+\var(\nu)\big) .$$
	By the assertion (iii) of Lemma \ref{fundamental},  $u$ is an admissible competitor for \eqref{secondorder},  hence $ \int u\, d\nu - \int u\, d\mu \le \I(\nu-\mu)$. This gives the following upper bound,
	$$  \int \f \, d\mu + \int \psi\, d\nu\,  \le\,  \I(\nu-\mu)+  \frac1{2} \big(\var(\mu)+\var(\nu)\big).$$
	The desired inequality  \eqref{claim:W<I} is obtained by taking the supremum with respect to all pairs $(\f,\psi)\in \FF$.    \end{proof}

\subsection{Proof of Theorem \ref{thm2-intro}}
  By Proposition \ref{duality-2convex} and 
Proposition \ref{LCP=2convex}, we already know that,
$$ \I(\nu-\mu) + \frac1{2} \big(\var(\mu)+\var(\nu)\big) =\V'(\mu,\nu)  = \V(\mu,\nu).$$    
Therefore,  it remains  to check that the infimum $\JJ(\mu,\nu)$ in the three-marginal optimal transport  problem \eqref{OT3} satisfies
the equality,
\begin{equation}\label{JJV}
\JJ(\mu,\nu) = \V(\mu,\nu) - \frac1{2} \big(\var(\mu)+\var(\nu)\big) .
\end{equation}
Let $\pi\in \Sigma(\mu,\nu)$ be  a competitor for  \eqref{OT3}, and let $\rho$ be its third marginal. Then, by Lemma \ref{equival},
we know that $\rho\in\A(\mu,\nu)$,  while $ \iiint  \pairing{z-x, x} \, \pi(dxdydz)=\iiint \pairing{z-y, y} \,  \pi(dxdydz) =0$
by particularizing the equilibrium condition  \eqref{equilibrium} for $\Phi= \Psi= \ident$. Thus, recalling the formula for the cost $c(x,y,z)=\frac1{2} (|z-x|^2 + |z-y|^2)$, we have,
\begin{align*}  \iiint& c(x,y,z) \, \pi(dxdydz)\\
	&= \frac1{2} \big(\var(\mu)+\var(\nu)\big) + \var(\rho) -\iiint  \pairing {z,x+y}\, \pi(dxdydz) \\
&=      \var(\rho)- \frac1{2} \big(\var(\mu)+\var(\nu)\big) .
\end{align*} 
The equality \eqref{JJV} then follows from \eqref{Sigma=A} by noticing that  taking the infimum with respect to $\pi\in \Sigma(\mu,\nu)$ on the left hand side above amounts to taking the infimum with respect to $\rho\in\A(\mu,\nu)$ in the last line. 
As a consequence, we see that   $\iiint c(x,y,z) \, \pi(dxdydz) = \JJ(\mu,\nu)$ if only if  $\var(\rho) =\V(\mu,\nu)$.
That proves the assertion  (i) of Theorem \ref{thm2-intro}. The assertion (ii) is a direct consequence of Proposition \ref{duality-2convex} (existence of optimal $\rho$) and of \eqref{Sigma=A} (existence of $\pi\in \Sigma(\mu,\nu)$
with the third marginal $\rho$).  
\qed

\subsection{Proof of Theorem \ref{main-intro}}

The equality $\I(f) = \JJ(\mu,\nu)$ and the existence of an optimal  $\pi\in \Sigma(\mu,\nu)$ have been already established (see Theorem \ref{thm2-intro}). The existence of an optimal $u$ solving \eqref{secondorder}
follows from Proposition \ref{gap=0} applied to $f = f_0 = \nu-\mu$.
Before proving the assertion (ii), we  recall  that if $u$ is admissible for \eqref{secondorder}, then  by integrating \eqref{ineq-3points}  with respect to any $\pi\in \Sigma(\mu,\nu)$ and by taking the relations \eqref{equilibrium} into account, we get
\begin{align*}
	\iiint & c(x,y,z)\, \pi(dxdydz)  \\
	&\geq \iiint \Big(\left[u(y) + \pairing{\nabla u(y), z-y}\right] -\left[u(x) + \pairing{\nabla u(x), z-x} \right] \Big) \pi(dxdydz) \\
	&= \int u \,d\nu- \int u\, d\mu.
\end{align*}
Therefore, since $\I(\nu-\mu)= \mathcal{J}(\mu,\nu)$, the optimality of $(u,\pi)$ is equivalent to the fact that the above inequality is an equality.
In view of {Lemma \ref{lipConv}}, this happens  if and only if the three-point condition \eqref{ineq-3points} holds true $\pi$-a.e. 
\qed

\subsection{Proof of Corollary \ref{coro1-intro}}  Let $\pi \in \Sigma(\mu,\nu)$ be an admissible 3-plan for \eqref{OT3},
and let us consider the  associated tensor valued measure, namely $\sigma= \iiint \sigma^{x,y,z} \, \pi(dxdydz)$. 
We claim that,
\begin{equation}\label{si-admi}
\int \varrho^0(\sigma)  \le  \iiint c(x,y,z)\, \pi(dxdydz),\qquad  \dive^2 \sigma = \nu-\mu \ \ \text{in $\D'(\R^d)$}\, .
\end{equation}
Then, if $\pi$ is optimal for \eqref{OT3}, we  will deduce that,
$$ \I'(\nu-\mu)\  (=\min \eqref{stress}) \, \le \int \varrho^0(\sigma) \, \le \iiint c(x,y,z)\, \pi(dxdydz) = \JJ(\mu,\nu) ,$$
hence the optimality of $\sigma$ since we have {$\I'(\nu-\mu) =\I(\nu-\mu)=\JJ(\mu,\nu)$} by virtue of Proposition \ref{gap=0}
and Theorem \ref{main-intro}.

Let us now prove \eqref{si-admi}. By  the subadditivity property of the convex one-homogenous functional
$\Mes(\R^d;\Sdd) \ni \sigma\mapsto \int \varrho^0(\sigma)$, we have
$$  \int \varrho^0(\sigma) \, \le \iiint \left(\int \varrho^0(\sigma^{x,y,z})\right) \pi(dxdydz) \,
\le \iiint c(x,y,z)\, \pi(dxdydz) .$$
Indeed, recalling the definition of the rank-one measure $\sigma^{x,y,z}$ given in \eqref{si-xyz}, we have
$$   \int \varrho^0(\sigma^{x,y,z}) \le  \int\limits_{[z,x]} \!\abs{\xi-z}\, \Ha^1(d\xi) + \int\limits_{[z,y]} \!\abs{\xi-z}\, \Ha^1(d\xi) 
= \frac1{2} (|x-z|^2 + |y-z|^2),  $$
with the inequality being an equality if the  segments $[x,z]$ and $ [y,z]$ do not overlap.
Finally, let us show  that $\sigma$ satisfies the distributional constraint $\dive^2 \sigma= \nu-\mu$. 
Recalling that $ \dive^2 \sigma^{x,y,z} =f^{x,y,z}$, where $f^{x,y,z}:= \delta_y - \delta_x - \dive\big((z-y)\,\delta_y - (z-x)\,\delta_x \big)$ (see \eqref{f-xyz}), for each test function $\f\in \D(\R^d)$ we have,
\begin{align*} &\pairing{\f,\dive^2 \sigma} =  \iiint\pairing{\f,\dive^2 \sigma^{x,y,z}} \, \pi(dxdydz) = 
 \iiint\pairing{\f, f^{x,y,z}} \, \pi(dxdydz)\\
&\quad= \ \iiint \Big(\f(y)-\f(x) + \pairing{\nabla\f(y),z-y} - \pairing{\nabla\f(x),z-x}   \Big) \, \pi(dxdydz) \\
&\quad= \ \int \f \, d\nu - \int \f\, d\mu  ,   
\end{align*} 
where the last equality relies on the relations \eqref{equilibrium}.
This proves our claim \eqref{si-admi}, hence the first assertion of Corollary \ref{coro1-intro}. 
Let us now consider the marginal $\gamma= \pi_{1,2}:=\Pi_{1,2}^\#(\pi) $ of an admissible $\pi\in \Sigma(\mu,\nu)$ with respect to the first two coordinates.
There is no loss of generality in assuming that  $\iiint c\, d\pi <+\infty$, which allows to deduce that $\pi\in  \PP_2((\R^d)^3)$.
Then, there exists a $\gamma$-measurable family $\{{\pi^{x,y}}\}$ in $\PP_2(\R^d)$ satisfying the disintegration formula $\pi(dxdydz) = \gamma(dxdy) \otimes \pi^{x,y}(dz)$, see the convention \eqref{eq:convention}. It yields,
$$\iiint \alpha(x,y,z) \, \pi(dxdydz) = \iint \pairing {\alpha(x,y,\argu),{\pi^{x,y}}} \, \gamma(dxdy) \quad \ \ \forall \alpha\in X_2((\R^d)^3) .$$
Let us apply this formula to the following element of $X_2((\R^d)^3)$,
$$\alpha_u (x,y,z) := \left[u(y) + \pairing{\nabla u(y), z-y}\right]-  \left[u(x) + \pairing{\nabla u(x), z-x}\right]  - c(x,y,z)
 ,$$
where $u$ is admissible for \eqref{secondorder}. By \eqref{ineq-3points} we have $\alpha_u \le 0$, while, by virtue of 
the second assertion of Theorem \ref{main-intro},    $\alpha_u=0$
holds $\pi$-a.e. whenever the pair $(u,\pi)$ is optimal.  In this case, we get
$$  0 = \iiint \alpha_u(x,y,z) \, \pi(dxdydz)  = \iint \pairing {\alpha_u(x,y,\argu),{\pi^{x,y}}} \, \gamma(dxdy) ,$$
yielding that $\spt( {\pi^{x,y}}) \subset \{z : \alpha_u(x,y,z)=0\}$ for $\gamma$-almost all $(x,y)\in (\R^d)^2$. 
Next, we show that the subset  $\{\alpha_u(x,y,\cdot)=0\}$ reduces to the singleton $\{z_u(x,y)\}$ where,
\begin{equation}\label{zu}
z_u(x,y) =  \frac{x+y}{2} + \frac{\nabla u(y)-\nabla  u(x)}{2} .
\end{equation}
 For fixed  $(x,y)$ the function $z\to \alpha_u(x,y,z)$ is strictly concave; hence, it reaches its maximum on $\R^d$ at the unique point $z_u(x,y)$ where  $\partial_z \alpha_u(x,y,z) = \nabla u (y) -\nabla u(x) - (2 z -(x+y))$ vanishes.
This furnishes \eqref{zu}. Since $\nabla u$ is $1$-Lipschitz, $z_u(x,y)$ belongs to the ball $B(\frac{x+y}{2},\frac{|x-y|}{2})$.
Accordingly,  any optimal transport plan $\ov\pi$ is supported on {$(\B(\spt \mu, \spt \nu))^3$},
so the associated optimal tensor measure $\ov \sigma = \iiint \sigma^{x,y,z} d\ov\pi$ satisfies   \eqref{spt-sigma}.
The proof of the assertion (ii) is now complete.  
\qed

%
%
%
%
\bigskip
\section{Examples} \label{examples}
In this section we give exact solutions for some classes of data $\mu, \nu$. In each case we propose a pair $(u,\pi)$ and prove its optimality  by checking the optimality condition (ii) in Theorem \ref{main-intro}. It turns out that, after checking the three-point equality \eqref{3-eq}, the main challenge is to check  the admissibility conditions $-\mathrm{Id} \leq \nabla^2 u \leq \mathrm{Id}$ and $\pi \in \Sigma(\mu,\nu)$.
Once the optimality of $(u,\pi)$ is proved,  an optimal convex dominant $\rho$ is computed as the third marginal of $\pi$, see Theorem \ref{thm2-intro}. Meanwhile,  according to the Corollary \ref{coro1-intro},  a solution of the second-order Beckmann  problem \eqref{stress} of the form $\sigma= \iiint \sigma^{x,y,z}\,\pi(dxdydz)$ is derived.

\subsection{Ordered measures}
The simplest class of data is the one of $\mu,\nu \in \PP_2(\Rd)$ that are in  convex order. Let us assume that
\begin{equation*}
	\mu \preceq_c \nu.
\end{equation*}
Then, for any martingale transport plan $\gamma \in  MT(\mu,\nu)$,  an optimal pair $(u,\pi)$ is given by,
\begin{equation} \label{optichoice}
	u(x) = \frac{1}{2} \abs{x}^2, \qquad \pi(dxdydz) = \gamma(dxdy) \otimes \delta_y(dz),
\end{equation}
see the convention \eqref{eq:convention}. Recall that $MT(\mu,\nu)$ is non-empty  by virtue of Strassen theorem.

Admissibility of $u$ is clear, and $\pi \in \Sigma(\mu,\nu)$ follows easily from Lemma \ref{Sigmamunu=Amunu}. Due to the form of $\pi$, the three-point optimality condition \eqref{3-eq} has to be checked merely for the triples $(x,y,z)$ with $z=y$.
  This is automatic since  $u$ satisfies the identity\footnote{
The left  hand side is nothing else but the Bregman divergence of $u$ at $y$ around $x$.},
 $$  u(y) - [ u(x) + \pairing{\nabla u(x), y-x}]  = \frac1{2} |x-y|^2 \, .\quad $$ 
 With the validated optimality of the pair $(u,\pi)$,  we can deduce the minimal energy,
 $$\mathcal{I}(\nu-\mu) = \int u\, d(\nu-\mu) = \frac{1}{2}\big(\var(\nu) - \var(\mu)\big).$$
Moreover, the solution $\sigma$ provided by Corollary \ref{coro1-intro} takes the form \linebreak $\iint \sigma^{x,y,y} \,\gamma(dxdy)$
where, by \eqref{si-xyz}, $\sigma^{x,y,y}$  is positive semi-definite, thus   $\sigma \in \Mes(\Rd;\Sddp)$. 
 Eventually, in view of the property (p2) (in Section \ref{MTsurv}), we see that $\rho=\nu$ is the unique  minimizer of the optimal convex dominance problem $\V(\mu,\nu)$. 
 In contrast, the solution $\sigma$ to \eqref{stress} is not unique, as is shown in the remark below. 
 Our argument will be based on  the simple criterium as follows:
 \begin{proposition}\label{positive}
	Assume that $\mu \preceq_c \nu$. Then, a measure $\sigma \in \Mes(\Rd;\Sdd)$ satisfying the constraint $	 \dive^2 \sigma = \nu - \mu$ solves the second-order Beckmann problem \eqref{stress} if and only if it is positive semi-definite.
\end{proposition}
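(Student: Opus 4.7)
My strategy is to exploit the explicit optimal potential $u(x) = \tfrac{1}{2}\abs{x}^2$ identified in the ordered case, and to convert the Beckmann energy comparison into a pointwise matrix inequality. Recall that for $\mu \preceq_c \nu$ we already have $\I(\nu-\mu) = \tfrac{1}{2}\bigl(\var(\nu)-\var(\mu)\bigr)$, attained by this $u$ (whose Hessian is $\Ident$, hence admissible), and by Proposition \ref{nogap-classic} this value also equals the Beckmann minimum $\I'(\nu-\mu)$.

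The first step is to apply the integration-by-parts identity \eqref{byparts} (Lemma \ref{lemma:byparts}) to the specific test function $u(x) = \tfrac{1}{2}\abs{x}^2$. Note that $u \in K_0 \cap C^\infty$, and $f = \nu-\mu$ is centred, so the decomposition $f_0 = \nu-\mu$, $F = 0$ satisfies the balance conditions \eqref{genload}. For any admissible $\sigma$ with $\int\varrho^0(\sigma)<+\infty$, the identity reads
\[
\int \tr\sigma \;=\; \int \pairing{\nabla^2 u,\sigma} \;=\; \pairing{u,\nu-\mu} \;=\; \tfrac{1}{2}\bigl(\var(\nu)-\var(\mu)\bigr) \;=\; \I'(\nu-\mu).
\]
The integrability of $\tr\sigma$ is automatic since $|\tr S|\le \varrho^0(S)$ for symmetric $S$.

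The second step is the elementary pointwise bound: for every $S\in\Sdd$,
\[
\tr S \;=\; \sum_{i=1}^d \lambda_i(S) \;\leq\; \sum_{i=1}^d \abs{\lambda_i(S)} \;=\; \varrho^0(S),
\]
with equality if and only if $S\in\Sddp$. Writing $\sigma = (d\sigma/d\theta)\,\theta$ for some non-negative reference measure $\theta$ (e.g.\ $\theta = \abs{\sigma}$) and integrating this inequality yields, for every admissible $\sigma$,
\[
\I'(\nu-\mu) \;=\; \int \tr\sigma \;\leq\; \int \varrho^0(\sigma).
\]
Consequently $\sigma$ is optimal if and only if the pointwise inequality is an equality $\theta$-a.e., which is precisely the condition $d\sigma/d\theta \in \Sddp$, i.e.\ $\sigma\in\Mes(\Rd;\Sddp)$. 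This proves both implications simultaneously.

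The only delicate point is justifying the integration-by-parts identity with the unbounded function $u=\tfrac{1}{2}\abs{\argu}^2$, but this is precisely the content of Lemma \ref{lemma:byparts}: since $u\in K_0\cap C^\infty$ and the source has finite second-order moments, no further approximation is needed and the rest of the argument is a direct consequence of the pointwise matrix inequality.
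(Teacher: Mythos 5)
Your proof is correct and follows essentially the same route as the paper's: integrate by parts against $u=\tfrac12\abs{\argu}^2$ (via Lemma \ref{lemma:byparts}) to identify $\int\tr\sigma$ with $\I(\nu-\mu)=\I'(\nu-\mu)$, then use the pointwise inequality $\tr S\le\varrho^0(S)$, whose equality case characterizes $S\in\Sddp$, together with the zero-gap result of Proposition \ref{nogap-classic}. The only difference is cosmetic — you spell out the Goffman--Serrin density argument and the choice $f_0=\nu-\mu$, $F=0$ more explicitly than the paper does.
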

\begin{proof}
	Using the integration by parts formula \eqref{Rd-byparts}, for any $\sigma$ satisfying $\dive^2 \sigma = f  = \nu - \mu$, we have
	\begin{equation*}
		\int \varrho^0(\sigma) \geq  \int \tr\,\sigma  = \pairing{\nabla^2 u,\sigma} = \pairing{u,f} = \mathcal{I}(f),
	\end{equation*}
	where $u = \frac{1}{2} \abs{\argu}^2$ is an optimal potential according to \eqref{optichoice}.
	By the zero-gap result \eqref{gap=0},  the tensor measure $\sigma$ is optimal for \eqref{stress} if and only if 
	$\int \varrho^0(\sigma) =\I(f)$. This means that  the above inequality is an equality. Noticing that $\varrho^0(A) = \tr\,A$ for $A\in\Sdd$ implies that all the eigenvalues  of $A$ are non-negative,
	we infer that an admissible $\sigma$ is optimal if and only if it is an element of $ \Mes(\R^d;\Sddp)$. 
	\end{proof}


\begin{remark}[\emph{the non-uniqueness issue}] In general,  even if $\rho$ is unique, one can expect
that $\pi$ given in \eqref{optichoice} is not unique since there may exist multiple martingale transports  $\gamma\in MT(\mu,\nu)$.  In turn, this translates to possibly multiple  optimal tensor measures $\sigma$.
In fact, we can exploit Proposition \ref{positive} to see that non-uniqueness of optimal $\sigma$ goes beyond the one induced by the non-uniqueness of $\pi$.

 Let us consider the simple example when $\mu = \delta_0$ and $\nu = \sum_{i=1}^4 \frac{1}{4} \delta_{y_i}$ where $y_i$ are corners of the square centred at the origin. Clearly $\mu \preceq_c \nu$, and $\gamma =  \sum_{i=1}^4 \frac{1}{4} \delta_{(0,y_i)}$ is the unique element of $MT(\mu,\nu)$. It follows that $\Sigma(\mu,\nu)$ is a singleton, which gives uniqueness of optimal $\pi$. The induced minimizer $\sigma$  is the rank-one tensor measure defined as follows,
\begin{equation*}
	\sigma(d\xi) = \iiint \sigma^{x,y,z}(d\xi)\,\pi(dxdydz) = \sum_{i=1}^4   \frac{\abs{\xi-y_i}}{4} \, \frac{y_i}{\abs{y_i}} \otimes  \frac{y_i}{\abs{y_i}}  \, \Ha^1(d\xi) \mres[0,y_i].
\end{equation*}
Such $\sigma$ is demonstrated in Fig. \ref{fig:1vs4}(a). More accurately, the figure shows  the density of $\varrho^0(\sigma)$ with respect to $\Ha^1$ measure restricted to the four segments.

Meanwhile, the set of $\sigma \geq 0$ for which $\dive^2 \sigma = \nu-\mu$ is very rich. Figs \ref{fig:1vs4}(b,c) give examples of such measures. After Proposition \ref{positive}, they are also optimal for the second-order Beckmann problem \eqref{stress}.  It is even  possible to find optimal $\sigma$ that has an absolutely continuous part.
This example not only shows that we may experience great flexibility in the choice of optimal $\sigma$ but also that not every such optimal measure can be decomposed with respect to a three-point measure $\pi \in \Sigma(\mu,\nu)$ as  in Corollary \ref{coro1-intro}. This is a significant difference with respect to the classical first-order Beckmann problem  where 
all minimizers can be decomposed along transport rays by virtue of Smirnov theorem (see \cite{Smirnov}  and  Proposition 2.3 in \cite{Dweik}).
\begin{figure}[h]
	\centering
	\subfloat[]{\includegraphics*[trim={0cm 0cm -0cm -0cm},clip,width=0.2\textwidth]{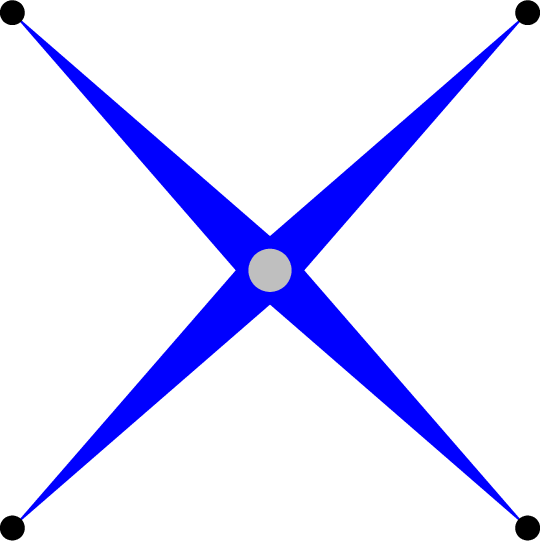}}\hspace{1.5cm}
	\subfloat[]{\includegraphics*[trim={0cm 0cm -0cm -0cm},clip,width=0.2\textwidth]{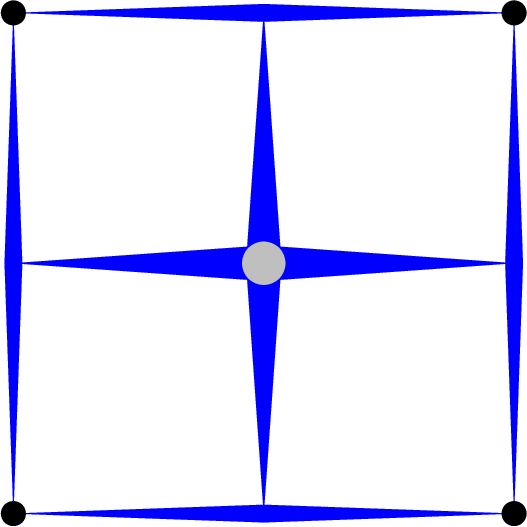}}\hspace{1.5cm}
	\subfloat[]{\includegraphics*[trim={0cm 0cm -0cm -0cm},clip,width=0.2\textwidth]{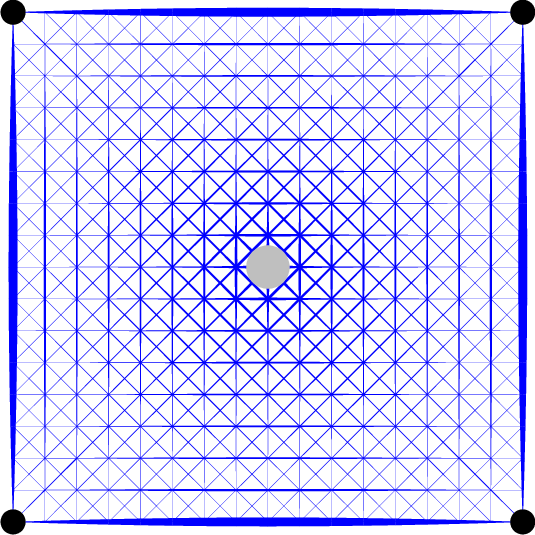}}
	\caption{Various optimal $\sigma$ (blue) for the data $\mu = \delta_0$ (gray) and $\nu = \sum_{i=1}^4 \frac{1}{4} \delta_{y_i}$ (black). Only the density of the 1D measure $\sigma$ is displayed.}
	\label{fig:1vs4}       
\end{figure}

\end{remark}

\subsection{Gaussian measures}
\label{ex:Gaussian}
In this example we assume the data to be two centred Gaussian distributions on $\Rd$,
\begin{equation*}
	\mu = \mathcal{N}(0,M), \quad \nu = \mathcal{N}(0,N),
\end{equation*}
where $M,N \in \Sddp$ are two positive semi-definite covariance matrices.
Note that if these matrices are ordered,  we find ourselves in the framework of the former example (see the comment after 
(p2) in Section \ref{MTsurv}). 
In the general case, at the core of the solution lies the spectral decomposition of the difference of the covariance matrices,
\begin{equation*}
	N-M =  \sum_{i=1}^d \lambda_i\, a_i \otimes a_i,
\end{equation*}
where $a_i$ are mutually orthogonal vectors on the unit sphere $S^{d-1}$. Let us define the projection matrices,
\begin{equation*}
	P_- := \sum_{\{i \, : \, \lambda_i < 0\} }  a_i \otimes a_i, \qquad 	P_+ := \sum_{\{i \, : \, \lambda_i \geq 0\} } a_i \otimes a_i = \mathrm{Id} - P_-.
\end{equation*}
The following symmetric positive semi-definite matrices will prove to be essential:
\begin{align*}
	M \vee N & := M + (N-M)_+ = N + (M-N)_+, \\ 
	M \wedge N &:= M - (M-N)_+ = N - (N-M)_+,
\end{align*}
where,
$$(N-M)_+ =  \sum_{i=1}^d (\lambda_i)_+  a_i \otimes a_i, \qquad (M-N)_+ =   \sum_{i=1}^d (\lambda_i)_-\, a_i \otimes a_i .$$
According to Remark \ref{wedge-vee}, $M \vee N$ can be seen as the least majorant of the matrices $M,N$, and $M\wedge N$ as their greatest minorant.


We are going to now show  that an optimal pair $(u,\pi)$ is given by,
\begin{equation*}
	u(x) = \frac{1}{2} \sum_{i=1}^d \mathrm{sgn}(\lambda_i) \pairing{a_i,x}^2,  \qquad  	\pi = \gamma(dxdy) \otimes \delta_{z_u(x,y)}(dz),
\end{equation*}
where we agree to the convention that $\mathrm{sgn}(0) = 1$, while:
\begin{enumerate}
\item[-] the transport plan $\gamma \in \PP(\Rd \times\Rd)$ is  a normal distribution,
\begin{equation*}
	\gamma = \mathcal{N}\left(0,G \right), \qquad G = \begin{bmatrix}
		M & M \wedge N \\
		M \wedge N  & N
	\end{bmatrix};
\end{equation*}

\item[-] the function $z_u$ is computed according to \eqref{zbar}, which here leads to,
\begin{equation*}
	z_u(x,y) =  P_- x + P_+ y.
\end{equation*}

\end{enumerate}
The positive semi-definiteness of $G$ is clear since $M \wedge N$ is a minorant for both $M$ and $N$.
Since $\nabla^2 u =  \sum_{i=1}^d \mathrm{sgn}(\lambda_i)\, a_i \otimes a_i$, feasibility of $u$ is also straightforward. In view of the disintegrated form of $\pi$, it is sufficient to show that the equality \eqref{3-eq} holds for every triple $\big(x,y,z_u(x,y)\big)$ where $(x,y)$ ranges in the whole $(\Rd)^2$. This reduces to a tedious but elementary computation.

The more involved part is showing the admissibility $\pi \in \Sigma(\mu,\nu)$. As the first and second marginals of $\pi$ coincide with those of $\gamma$, they are equal to $\mu$ and $\nu$, respectively. Thus, by virtue of Lemma \ref{Sigmamunu=Amunu}, it is enough to show that the marginals $\pi_{1,3} := \Pi^\#_{1,3} (\pi)$ and $\pi_{2,3} := \Pi^\#_{2,3} (\pi)$ are martingale plans.
Integrating against a test function $\phi \in \mathrm{C}_0(\Rd \times \Rd)$, we obtain
\begin{align*}
	\iint \phi(x,z) \,\pi_{1,3}(dxdz) = \iiint \phi(x,z) \, \pi(dxdydz) &= \iint \phi\big(x,z_u(x,y)\big) \, \gamma(dxdy) \\
	 = \iint \phi\big(x,x + P_+(y-x)\big) \, \gamma(dxdy) &= \iint \phi(x,x+z)  \,\hat\gamma(dxdz).
\end{align*}
Above $\hat{\gamma}$ is the push forward of $\gamma$ through the map
$A(x,y) = (x,z) = (x, P_+(y-x))$. As $A$ is linear, it might be identified with a $2d \times 2d$ matrix. Accordingly, $\hat{\gamma}$ is another Gaussian given by,
\begin{equation*}
	\hat{\gamma} = \mathcal{N}(0,\hat{G}), \qquad \hat{G} = AGA^\top =  \begin{bmatrix}
		M & 0\\
		0   &(N-M)_+
	\end{bmatrix}.
\end{equation*}
Note that the matrix multiplication above is straightforward once we observe that,
$$(M\wedge N) P_+ = M P_+, \qquad (M\wedge N) P_- = N P_-.$$
 The structure of the matrix $\hat{G}$ shows that $\hat{\gamma}$ is a product of two Gaussians:  $\hat{\gamma} = \mathcal{N}(0,M) \otimes \mathcal{N}(0,(N-M)_+)$. We continue the chain of equalities,
\begin{align*}
	\iint \phi(&x,z) \,\pi_{1,3}(dxdz)\\
	&=  \int \left( \int \phi(x,x+z) \,\mathcal{N}\big(0,(N-M)_+\big)(dz)\right) \mathcal{N}(0,M) (dx) \\
	& =  \int \left( \int \phi(x,z) \,\mathcal{N}\big(x,(N-M)_+\big)(dz)\right) \mathcal{N}(0,M) (dx),
\end{align*}
in order to arrive at,
\begin{equation}\label{pi13}
	\pi_{1,3}(dxdz) = \mu(dx) \otimes \mathcal{N}\big(x,(N-M)_+\big)(dz).
\end{equation}
It is clear that $\pi_{1,3}$ is a martingale. In a similar way one shows that $	\pi_{2,3} = \nu \otimes \mathcal{N}\big(y,(M-N)_+\big)$,
which is also a martingale. We have thus proved that $\pi \in \Sigma(\mu,\nu)$ and, ultimately, that $(u,\pi)$ are optimal. The minimal energy equals,
\begin{equation*}
	\mathcal{I}(\nu - \mu) = \int u\, d(\nu - \mu) = \frac{1}{2} \sum_{i=1}^d \mathrm{sgn}(\lambda_i)\, \pairing{N-M,a_i \otimes a_i}  = \frac{1}{2} \varrho^0(N-M).
\end{equation*}

To identify the optimal measure $\rho$ we compute the third marginal of $\pi$. 
Utilizing the disintegration formula \eqref{pi13} for $\pi_{1,3}$ we find that it is a convolution of two Gaussians,
\begin{equation*}
	\rho = \pi_3 = \mu \star \mathcal{N}\big(0,(N-M)_+\big) =  \mathcal{N} \big(0,M+(N-M)_+\big) = \mathcal{N} (0,M \vee N)
§\end{equation*}
(note that we obtain the same result  when computing the second marginal of $\pi_{2,3}$).

\begin{remark}\label{wedge-vee}
	It is possible to show directly that  $\ov \rho:=\mathcal{N} (0,M \vee N)$  is a solution to the minimal variance problem \eqref{min-var}. Indeed, since $\ov \rho$ satisfies the dominance constraints (cf. (p2) in Section \ref{MTsurv}), we have $\V(\mu,\nu) \le \tr(M\vee N)$.
In the opposite direction, any admissible $\rho\in \A(\mu,\nu)$ admits a covariance matrix $R\in \Sddp$ such that  $R \geq M, \ R \geq N$. Therefore, since $\var(\rho) = \tr\, R$, we have
	 \begin{equation*}
	\V(\mu,\nu) \ \ge \ 	\min_{R \in \Sddp} \Big\{ \tr \, R \, : \, R \geq M, \ R \geq N \,\Big\}.
	\end{equation*}
It is not difficult to check that the right hand side above is  a semi-definite program	which admits a unique solution given by $R= M \vee N$. The optimality of $\ov \rho$ follows.
 Notice that, similarly, the matrix $M \wedge N$ uniquely solves the analogous maximization problem where the convex order constraints are reversed. In this sense, $M \vee N$ is the least majorant of the matrices $M,N$, whilst $M\wedge N$ is their greatest minorant.
\end{remark}

\subsection{Two-point measures }
\label{ex:2vs2}

The simplest non-trivial data possible is when both measures are supported by two points,
\begin{equation*}
	\mu = \sum_{i=1}^2 \mu_i \,\delta_{x_i}, \qquad \nu = \sum_{j=1}^2 \nu_j \,\delta_{y_j}.
\end{equation*}
As the barycentres must coincide, the problem is virtually planar. We can thus \textit{a priori} assume that $d = 2$. In addition, we enforce that the four points are not aligned so that 1D scenario is avoided.

As before, we assume that the measures are centred, i.e. $[\mu] = [\nu] =0$. In this case $x_1 = - \frac{\mu_2}{\mu_1} x_2$,  $y_1 = - \frac{\nu_2}{\nu_1} y_2$. Note that the weights follow automatically from the positions,
\begin{equation}
	\label{eq:weights}
	\mu_i =\frac{\abs{x_{i'}}}{\abs{x_1}+\abs{x_2}}, \qquad  \nu_j = \frac{|y_{j'}|}{ \abs{y_1}+\abs{y_2}},
\end{equation}
where $i' =3 -i$, $j'=3-j$.

The main challenge lies in the fact that the type of the solution switches depending on the geometrical property of the convex quadrilateral formed by the points $x_1,y_2,x_2,y_1$. Indeed, the two cases below must be considered:
\begin{enumerate}
	\item [(A)] the pairs of opposite edges of the quadrilateral are inclined at an angle non-greater than $\pi/2$;
	\item [(B)] the angle between one of the pairs of opposite edges exceeds $\pi/2$.
\end{enumerate}
The pairs of lines extending the edges in questions are drawn in Fig. \ref{fig:2vs2}(a). In fact, being in the scenario (A) is equivalent to the system of two inequalities: 
\begin{subequations}
	\label{eq:angles}
	\begin{align}
		\label{eq:angles1}
		&\pairing{x_2-y_2,y_1-x_1} \geq 0,\\
		\label{eq:angles2}
		&\pairing{x_1-y_2,y_1- x_2} \geq 0.
	\end{align}
\end{subequations}
It is worth emphasizing that at least one of those inequalities is always met.

 \noindent\underline{Case (A)}

To an extent, this case is similar to the Gaussian example as again the spectral decomposition of the difference of the covariance matrices will play the central role. Defining $M= \int x \otimes x \, \mu(dx)$ and $N= \int y \otimes y \, \nu(dy)$ we can make use of \eqref{eq:weights} to show that,
\begin{equation}
	\label{eq:MN_formulas}
	M = - x_1 \otimes x_2 = - x_2 \otimes x_1 , \qquad N= -y_1 \otimes y_2=-y_2 \otimes y_1.
\end{equation}
Since we assumed that the four points are not collinear, the difference always has two eigenvalues of opposite signs,
\begin{equation*}
	N-M = \lambda_a \, a \otimes a + \lambda_b \, b \otimes b, \qquad \lambda_a <0, \ \  \lambda_b>0,
\end{equation*}
where $a \perp b$, and $a,b \in S^1$.
In what follows we prove that in the case (A) the problems $\mathcal{I}(\nu-\mu)$ and $\mathcal{J}(\mu,\nu)$ are solved by, respectively,
\begin{equation}
	\label{upi_caseA}
	u(x) = \frac{1}{2} \big( \pairing{b,x}^2 -\pairing{a,x}^2   \big), \qquad \pi = \sum_{i,j=1}^{2} \gamma_{ij}\, \delta_{(x_i,y_j,z_{ij})},
\end{equation}
where,
\begin{equation}
	\label{eq:zij_rij}
		\gamma_{ij}= \mu_i \frac{\langle b,y_{j'} -x_i\rangle }{\langle b, y_{j'}-y_j \rangle}, \qquad z_{ij} =  \pairing{a,x_i} \, a + \pairing{b,y_j}\,b.
\end{equation}
We observe that $z_{ij} = z_u(x_i,y_j) = P_- x_i +P_+ y_j$ for $P_- = a\otimes a$, $P_+ = b \otimes b$. Accordingly, both admissibility of $u$ and  the three-point optimality condition \eqref{3-eq} can be shown identically as in Example \ref{ex:Gaussian}. The biggest challenge consists in showing that $\pi \in \Sigma(\mu,\nu)$. In fact, it is the positivity of $\gamma_{ij}$ that is the most delicate. The following result shows that it characterizes the case (A):
\begin{lemma}
	\label{lem:positive_rij}
	The  inequalities \eqref{eq:angles} hold true if and only if $\gamma_{ij} \geq 0 $ for all $i,j \in \{1,2\}$.
\end{lemma}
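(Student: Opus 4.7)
My plan is to reformulate both sides of the equivalence as sign conditions on a common family of four linear forms built from the $(a,b)$-coordinates of $x_2$ and $y_2$, and then to match them by means of the hypothesis $\lambda_a < 0$.

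I will begin by decomposing $x_2 = s\,a + t\,b$ and $y_2 = u\,a + v\,b$, recalling $x_1 = -\alpha x_2$ and $y_1 = -\beta y_2$ with $\alpha := \mu_2/\mu_1$ and $\beta := \nu_2/\nu_1$, and by fixing the orientation of $b$ so that $v>0$. A direct inspection of \eqref{eq:zij_rij} reveals that $\gamma_{ij}\ge 0$ for all $i,j$ amounts to requiring both $\langle b,x_1\rangle = -\alpha t$ and $\langle b,x_2\rangle = t$ to lie in $[-\beta v,\,v]$, which is conveniently rewritten as
\[
(v-t)(t+\beta v)\ge 0 \quad\text{and}\quad (v+\alpha t)(\beta v-\alpha t)\ge 0.
\]
The bridge between the $a$- and $b$-coordinates is the identity
\[
\alpha\,s\,t \;=\; \beta\,u\,v, \qquad (\ast)
\]
which follows at once from $\langle a,(N-M)b\rangle = 0$ together with the formulas \eqref{eq:MN_formulas}. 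Substituting $(\ast)$ to eliminate $t$ factors out a manifestly positive prefactor and transforms the two products above into
\[
(s-u)(s+\beta u)\ge 0 \quad\text{and}\quad (\alpha s-\beta u)(\alpha s+u)\ge 0. \qquad (\mathrm{C})
\]

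Next, I will apply the same substitution to the two angle inequalities \eqref{eq:angles}. Each of them splits as a sum of its $a$- and $b$-parts, and a short calculation using $(\ast)$ exhibits the $b$-part as a positive multiple of the $a$-part; dividing out the positive prefactor, both inequalities take the form
\[
\eqref{eq:angles1}\iff (\alpha s-\beta u)(s-u)\ge 0, \qquad \eqref{eq:angles2}\iff (\alpha s+u)(s+\beta u)\ge 0. \qquad (\mathrm{E})
\]

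The final step, which I expect to be the main obstacle, is the algebraic equivalence $(\mathrm{C})\iff(\mathrm{E})$. Setting $A = \alpha s - \beta u$, $B = s - u$, $C = \alpha s + u$, $D = s + \beta u$, it reads
\[
\bigl(BD \ge 0 \ \text{and}\ AC \ge 0\bigr) \;\iff\; \bigl(AB \ge 0 \ \text{and}\ CD \ge 0\bigr);
\]
abstractly these are inequivalent, differing precisely on ``mixed'' sign patterns of $(A,B,C,D)$ such as $(+,+,-,-)$ or $(+,-,+,-)$. The hypothesis $\lambda_a < 0$ supplies the missing information $\alpha s^2 > \beta u^2$, confining $(s,u)$ to the open double cone $|u/s| < \sqrt{\alpha/\beta}$. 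Depending on the two dichotomies $\alpha \gtrless \beta$ and $\alpha\beta \gtrless 1$, exactly two of the four lines $\{A=0,\,B=0,\,C=0,\,D=0\}$ pass through the interior of this cone while the other two stay outside; the latter two forms therefore have constant sign on each connected component. A short case analysis across the four resulting sub-cases then shows that both $(\mathrm{C})$ and $(\mathrm{E})$ reduce to the same pair of sign conditions on the remaining two forms, delivering the equivalence and completing the proof.
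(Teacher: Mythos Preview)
Your proposal is correct, and the outlined case analysis does go through: in each of the four sub-cases determined by $\alpha\gtrless\beta$ and $\alpha\beta\gtrless 1$, exactly two of $A,B,C,D$ have constant sign on the cone $|u/s|<\sqrt{\alpha/\beta}$ (both equal to the sign of $s$), and the conditions $(\mathrm C)$ and $(\mathrm E)$ then both reduce to the same interval condition on $r=u/s$. One small point of presentation: the substitution $(\ast)$ actually sends $(v-t)(t+\beta v)$ to a positive multiple of $(\alpha s-\beta u)(\alpha s+u)$ and $(v+\alpha t)(\beta v-\alpha t)$ to a positive multiple of $(s-u)(s+\beta u)$, i.e.\ the individual correspondence is swapped relative to what you wrote, though the \emph{pair} $(\mathrm C)$ is of course unaffected.

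Your route is genuinely different from the paper's. The paper first reduces by symmetry to $\langle x_1,y_1\rangle\ge 0$, under which \eqref{eq:angles2} and $\gamma_{11},\gamma_{22}>0$ are automatic, and it remains to match \eqref{eq:angles1} with $\gamma_{12},\gamma_{21}\ge 0$. To do so it introduces a one-parameter scaling $\tilde x_i(t)$ and a concave quadratic $g(t)=t\langle\tilde x_2(t)-y_2,\,y_1-\tilde x_1(t)\rangle$, shows that its two positive roots $t_1<t_2$ produce the eigenvectors of $N-M$ (in particular identifies $b$ explicitly in terms of the data), and then concludes by monotonicity of two auxiliary linear functions $f_1,f_2$ evaluated at $t=1$. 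Your argument, by contrast, never constructs $b$: you work directly in $(a,b)$-coordinates, encode the eigenvector property once via the identity $\alpha st=\beta uv$, and finish with a purely algebraic sign analysis. Your approach is more elementary and treats the two angle conditions symmetrically; the paper's approach is more geometric and yields as a by-product an explicit formula for $b$ in terms of $x_i,y_j$.
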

\noindent As the proof is rather long and technical, it is moved to Appendix \ref{app:examples}. We can readily check that $\pi \in \Sigma(\mu,\nu)$ relying on Lemma \ref{Sigmamunu=Amunu}. The fact that the first marginal of $\pi$ is $\mu$ amounts to observing that $\sum_{j=1}^2 \gamma_{ij} = \mu_i$. Next we compute,
\begin{align*}
	\pi_{1,3}  = \sum_{i,j=1}^2 \gamma_{ij} \, \delta_{(x_i,z_{ij})} = \sum_{i=1}^2 \mu_i \, \delta_{x_i}\!  \otimes p^i, \qquad p^i := \sum_{j=1}^2 \frac{\langle b,y_{j'} -x_i\rangle }{\langle b, y_{j'}-y_j \rangle}  \,\delta_{z_{ij}}.
\end{align*}
Noting that  $z_{ij} = x_i+ \pairing{b,y_j-x_i} \, b$, it is easy to show that $[p^i] = x_i$, rendering $\pi_{1,3}$ a martingale.

To show that $\pi_2 = \nu$ and that $\pi_{2,3}$ is martingale as well, we derive an alternative formula for $\gamma_{ij}$ that is symmetric to \eqref{eq:zij_rij}. First, observe that $\mu_i = \frac{\pairing{a,x_{i'}}}{\langle a, x_{i'} -x_i \rangle}$ thanks to \eqref{eq:weights}. This starts the chain of equalities below in which we exploit the equality
$\pairing{a \otimes b,M} = \pairing{a\otimes b,N}$ and formulas \eqref{eq:MN_formulas},
\begin{align*}
	\gamma_{ij} &= \frac{\pairing{a,x_{i'}}}{\langle a, x_{i'} -x_i \rangle} \frac{\langle b,y_{j'} -x_i\rangle }{\langle b, y_{j'}-y_j \rangle} =\frac{ \pairing{ a \otimes b, -x_{i'} \otimes x_i} +\pairing{a,x_{i'}} \langle b,y_{j'} \rangle }{\langle a, x_{i'} -x_i \rangle \langle b, y_{j'}-y_j \rangle}  \\
	&=\frac{ \pairing{a \otimes b,-y_{j} \otimes y_{j'},} +\pairing{a,x_{i'}} \langle b,y_{j'} \rangle }{\langle a, x_{i'} -x_i \rangle \langle b, y_{j'}-y_j \rangle} = \frac{ \langle b,y_{j'}\rangle }{ \langle b, y_{j'}-y_j \rangle} \frac{\pairing{a,x_{i'}-y_j}}{\langle a, x_{i'} -x_i \rangle } \\
	&= \nu_j\, \frac{\pairing{a,x_{i'}-y_j}}{\langle a, x_{i'} -x_i \rangle }.
\end{align*}
Readily, arguments put forward above for the marginals $\pi_1, \pi_{1,3}$ can be now reproduced for $\pi_2, \pi_{2,3}$. Admissibility $\pi \in \Sigma(\mu,\nu)$ is thus established and, hence, also the optimality of the pair $(u,\pi)$.

It remains to give the solutions of $\mathcal{V}(\mu,\nu)$ and of the second-order Beckmann problem \eqref{stress},
\begin{equation*}
	\rho = \pi_3  = \sum_{i,j=1}^2 \gamma_{ij} \, \delta_{z_{ij}}, \quad \ \ 	\sigma = \iiint \sigma^{x,y,z}\,\pi(dxdydz) = \sum_{i,j=1}^2 \gamma_{ij} \, \sigma^{x_i,y_j,z_{ij}}.
\end{equation*}

\noindent \underline{Case (B):}

It would be impractical to give a unified solution for all possible positions of the points that fall within the scope of the case (B). Instead, we shall assume that
$\pairing{x_1,y_1} \geq 0$ and $ \abs{x_1}\abs{y_2} \leq \abs{x_2} \abs{y_1}$. It is not restrictive as one can always relabel the points to guarantee it. Under those assumptions, one can easily observe that the inequality \eqref{eq:angles2} is automatically satisfied. Accordingly, the case (B) is characterized by the strict inequality,
\begin{equation}
	\label{eq:ineq_viol}
	\pairing{x_2-y_2,y_1-x_1} < 0.
\end{equation}

We start by defining the point $z_0 \in \R^2$ as the intersection of the two straight lines that extend the segments $[x_1,y_1]$ and $[x_2,y_2]$, see Figs \ref{fig:2vs2}(e,f). Let us endow the plane $\R^2$ with a polar coordinate system $x \mapsto \big(\varrho(x), \vartheta(x)\big) \in [0,\infty) \times [0,2\pi)$
where the pole  and the orientation of the system are fixed by,
\begin{equation*}
	\varrho(z_0) = 0, \quad  \vartheta(x_1) = 0, \quad \vartheta(x_2) \in (0,\pi).
\end{equation*}
Next, we define two coefficients:
\begin{equation*}
	\alpha = \frac{\pi}{ 2 \, \angle(x_2-y_2,y_1-x_1)}, \qquad \beta = \frac{\alpha}{4\alpha-1},
\end{equation*}
where $\angle$ is the angle between two vectors that \textit{a priori} ranges in $[0,\pi]$.
Under the assumption \eqref{eq:ineq_viol} we have $\alpha \in (\frac{1}{2},1)$ and $\beta \in (\frac{1}{3},\frac{1}{2})$. In particular, $\alpha \neq \beta$. In the polar coordinates the maximizer of $\mathcal{I}(\nu-\mu)$ is,
\begin{equation*}
	{\upsilon}(r,\theta) = \frac{1}{2}  h(\theta) \, r^2,
\end{equation*}
where,
\begin{equation*}
	h(\theta) = \begin{cases}
		h_1(\theta)=\cos(2\alpha \theta) & \text{if } \ \theta \in \big[2k \pi, 2k \pi + \pi/(2\alpha) \big),\\
		h_2(\theta) = \cos\big(2\beta (2\pi - \theta) \big) & \text{if } \ \theta \in \big[2k \pi + \pi/(2\alpha) , 2(k+1) \pi \big),
	\end{cases} 
\end{equation*}
where $k$ is any integer.
Finally, the following pair will be proved to solve the problems  $\mathcal{I}(\nu- \nolinebreak \mu)$ and $\mathcal{J}(\mu,\nu)$:
\begin{subequations}
	\label{eq:u_def}
	\begin{align}
		&u(x) := \upsilon\big( \varrho(x), \vartheta(x) \big), \\
		&\pi:= \nu_1 \, \delta_{(x_1,y_1,y_1)} + \mu_2 \, \delta_{(x_2,y_2,x_2)} +(\mu_1 - \nu_1) \, \delta_{(x_1,y_2,z_0)}.
	\end{align}
\end{subequations}
This time, the main difficulty  is to prove the admissibility of $u$. With the following lemma we see that it holds exactly in the case (B). The proof can be found in the Appendix \ref{app:examples}.
\begin{lemma}
	\label{lem:u} Assume that $\angle(x_2-y_2,y_1-x_1) \neq 0$. Then, the function $u$ in \eqref{eq:u_def} is an element of
	$ W^{2,\infty}_{\mathrm{loc}}(\R^2)$, whilst $u \notin C^2(\R^2)$ unless $\pairing{x_2-y_2,y_1-x_1}  = 0$.
	Moreover, the condition 
	\begin{equation*}
		-\mathrm{Id} \leq \nabla^2 u(x) \leq \mathrm{Id} \qquad \text{for a.e. $x \in \R^2$}
	\end{equation*}
	holds true if and only if $\pairing{x_2-y_2,y_1-x_1}  \leq 0$.
\end{lemma}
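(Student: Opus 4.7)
The plan is to exploit the $2$-homogeneity of $u$ around the pole $z_0$: writing $u(x)=\tfrac12\,h(\vartheta(x))\,\varrho(x)^2$ reduces everything to the scalar function $h$. A direct differentiation in the orthonormal polar frame $(\hat r,\hat\theta)$ gives, away from the rays where the definition of $h$ is switched,
\begin{equation*}
\nabla^2 u=\begin{pmatrix} h(\theta) & \tfrac12\, h'(\theta) \\[2pt] \tfrac12\, h'(\theta) & h(\theta)+\tfrac12\, h''(\theta) \end{pmatrix},
\end{equation*}
so second derivatives depend on $\theta$ alone, and the Cartesian Hessian is obtained by rotating this matrix by $\theta$. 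The remainder of the proof is then an analysis of this $2\times 2$ matrix on each of the two sectors together with a matching argument at the transition rays.

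I would first show that $u\in C^1(\R^2)\cap W^{2,\infty}_{\mathrm{loc}}(\R^2)$. The key algebraic identity, stemming from $\beta=\alpha/(4\alpha-1)$, is $2\beta\bigl(2\pi-\pi/(2\alpha)\bigr)=\pi$. At the transition ray $\theta^{*}:=\pi/(2\alpha)$ this forces $h_1(\theta^{*})=h_2(\theta^{*})=\cos\pi=-1$, and at $\theta=0$ (identified with $2\pi$) one has $h_1(0)=h_2(2\pi)=1$; moreover the arguments equal $0$ or $\pi$ at the junctions, so $h_1'$ and $h_2'$ vanish there. Hence $h$ is $C^1$ and $2\pi$-periodic, which together with $2$-homogeneity gives $u\in C^1(\R^2)$. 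Since $h,h'$ are continuous and bounded and $h''$ is bounded on each open sector, the polar-frame Hessian above is uniformly bounded on bounded sets, and a standard gluing argument promotes this to $u\in W^{2,\infty}_{\mathrm{loc}}(\R^2)$.

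For the $C^2$ question, continuity of $\nabla^2 u$ across a transition ray is, because the polar-to-Cartesian rotation is the same from either side, equivalent to continuity of the polar-frame matrix, and, since $h$ and $h'$ are already continuous, to continuity of $h''$. Computing $h_1''(\theta^{*})=4\alpha^2$ and $h_2''(\theta^{*})=4\beta^2$ exhibits a jump of $4(\beta^2-\alpha^2)$; whenever $\pairing{x_2-y_2,y_1-x_1}<0$ strictly, this jump is genuine and $u\notin C^2(\R^2)$. For the Hessian bound I substitute $h=\cos(2\alpha\theta)$, $h'=-2\alpha\sin(2\alpha\theta)$, $h''=-4\alpha^2 h$ into the polar-frame matrix; setting $c=\cos(2\alpha\theta)$, the eigenvalues reduce to
\begin{equation*}
\lambda_\pm(\theta)=(1-\alpha^2)\,c\;\pm\;\alpha\sqrt{\,1-(1-\alpha^2)c^2\,}.
\end{equation*}
The inequality $\lambda_+\le 1$ can be squared (both sides are non-negative since $1-(1-\alpha^2)c\ge\alpha^2$) and simplifies cleanly to $(1-\alpha^2)(1-c)^2\ge 0$, which holds iff $\alpha\le 1$; a symmetric computation treats $\lambda_-\ge -1$. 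Taking $c=0$ yields $\lambda_+=\alpha$, so the condition is sharp. In the second sector the same eigenvalue formula applies with $\beta$ in place of $\alpha$, and since $\alpha\ge 1/2$ (from $\angle(x_2-y_2,y_1-x_1)\le\pi$) the relation $\beta=\alpha/(4\alpha-1)$ keeps $\beta$ strictly below $1$, so the bound is automatic there. Combining the two sectors, $-\mathrm{Id}\le\nabla^2 u\le\mathrm{Id}$ a.e.\ is equivalent to $\alpha\le 1$, i.e.\ to $\angle(x_2-y_2,y_1-x_1)\ge\pi/2$, i.e.\ to $\pairing{x_2-y_2,y_1-x_1}\le 0$.

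The main bookkeeping obstacle is the matching identity $2\beta(2\pi-\pi/(2\alpha))=\pi$, without which the whole construction collapses; it is this identity that makes both $h$ and $h'$ continuous at the transitions and turns the squared-inequality test $\lambda_+^2\le 1$ into the transparent factorisation $(1-\alpha^2)(1-c)^2\ge 0$. Once these two computational pieces are in place, the three parts of the statement become essentially parallel computations on the two sectors.
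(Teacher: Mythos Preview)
Your proposal is correct and follows essentially the same route as the paper: both compute the Hessian in the polar frame, obtain the identical eigenvalue formula $\lambda_\pm=(1-\alpha^2)c\pm\alpha\sqrt{1-(1-\alpha^2)c^2}$, and reduce the bound $|\lambda_\pm|\le 1$ to the factorisation $(1-\alpha^2)(1\mp c)^2\ge 0$. The only cosmetic difference is the sharpness witness for $\alpha>1$: you take $c=0$ to get $\lambda_+=\alpha$, whereas the paper evaluates at $\theta=0$ (i.e.\ $c=1$) to get $\lambda_-=1-2\alpha^2<-1$; both are valid.
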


We move on to check the admissibility $\pi \in \Sigma(\mu,\nu)$. First, observe that $\mu_1 - \nu_1 = \nu_2 - \mu_2$ is non-negative thanks to the assumption $ \abs{x_1}\abs{y_2} \leq \abs{x_2} \abs{y_1}$. Then, checking that the first and second marginals of  $\pi$ are equal to $\mu$ and $\nu$, respectively, is straightforward. Prior to showing that $\pi_{1,3}, \pi_{2,3}$ are martingales we make an observation. By equality of the barycentres there holds 
$\int (x-z_0) \,\mu(dx) = \int (y-z_0) \,\nu(dy)$. In this particular case it leads to $ \mu_1\, (x_1 - z_0) - \nu_1 \, (y_1 - z_0) = 	\nu_2\,(y_2-z_0) - \mu_2\, (x_2-z_0)$. Both triples $(z_0,x_1,y_1)$ and $(z_0,y_2,x_2)$ are collinear, and the respective lines are never parallel (cf. Fig. \ref{fig:2vs2}(f)), so the vectors on each side of the equality must be zero. In turn, it generates the two equalities,
\begin{equation*}
	x_1 = \frac{\nu_1}{\mu_1} \, y_1  +  \frac{\mu_1 - \nu_1}{\mu_1} \,z_0, \qquad \quad y_2 = \frac{\mu_2}{\nu_2} \, x_2 + \frac{\nu_2 - \mu_2}{\nu_2} \, z_0.
\end{equation*}
By exploiting the first one, we check that $\pi_{1,3}$ is indeed a martingale,
\begin{align*}
	\pi_{1,3} &= \nu_1 \delta_{(x_1,y_1)} + \mu_2 \delta_{(x_2,x_2)} + (\mu_1-\nu_1) \delta_{(x_1,z_0)} \\
	&= \mu_1 \delta_{x_1} \otimes \left( \tfrac{\nu_1}{\mu_1} \, \delta_{y_1}  +  \tfrac{\mu_1 - \nu_1}{\mu_1} \, \delta_{z_0}  \right) + \mu_2 \, \delta_{x_2} \otimes \delta_{x_2}.
\end{align*}
Handling $\pi_{2,3}$ is similar. Ultimately,  $\pi \in \Sigma(\mu,\nu)$ is established.

It remains to check the three-point equality \eqref{3-eq}, and, in view of the form of $\pi$, it must be tested for the three triples $(x,y,z)$. The construction of $u$ ensures that,
\begin{equation}
	\label{eq:u_on_segments}
	u(\xi) = \frac{1}{2} \,\abs{\xi-z_0}^2 \quad \forall \, \xi \in L_1,  \qquad 	u(\xi) =- \frac{1}{2} \,\abs{\xi-z_0}^2 \quad \forall \, \xi \in L_2, 
\end{equation}
where $L_1$ and $L_2$ are the lines on which the triples $(z_0,x_1,y_1)$ and $(z_0,y_2,x_2)$ lie, respectively. As a result, one arrives at the following identities:
\begin{align}
	\label{eq:L1}
	u(\xi) - \big[u(x) + \pairing{\nabla u(x),\xi - x} \big]&= \tfrac{1}{2} \abs{\xi-x}^2 \qquad \forall \, \xi,x \in L_1, \\
	\label{eq:L2}
	\big[u(y) + \pairing{\nabla u(y),\xi - y} \big] - u(\xi)&= \tfrac{1}{2} \abs{\xi-y}^2 \qquad \forall \, \xi,y \in L_2.
\end{align}
We are ready to verify the condition \eqref{3-eq}. For the triple $(x_1,y_1,y_1)$ it reduces to \eqref{eq:L1} with $x= x_1$, $\xi = y_1$, while for $(x_2,y_2,x_2)$ to \eqref{eq:L2} with $y= y_2$, $\xi = x_2$. Finally, condition \eqref{3-eq} for the triple $(x_1,y_2,z_0)$ can be validated by adding equalities  \eqref{eq:L1} and  \eqref{eq:L2}, written for $x=x_1$, $\xi = z_0$ and, respectively, $y=y_2$, $\xi = z_0$.

Optimality of the pair $(u,\pi)$ is now established. Solutions to $\V(\mu,\nu)$ and to the second-order Beckmann problem \eqref{stress} read:
\begin{align*}
	\rho &= \pi_3 = \nu_1 \, \delta_{y_1} + \mu_2 \, \delta_{x_2} +(\mu_1 - \nu_1) \, \delta_{z_0}, \\
	 \sigma &= \nu_1 \, \sigma^{x_1,y_1,y_1} + \mu_2\, \sigma^{x_2,y_2,x_2} +  (\mu_1 - \nu_1)\, \sigma^{x_1,y_2,z_0}.
\end{align*}

\begin{figure}[h]
	\centering
	\subfloat[]{\includegraphics*[trim={0cm 0cm -0cm -0cm},clip,width=0.28\textwidth]{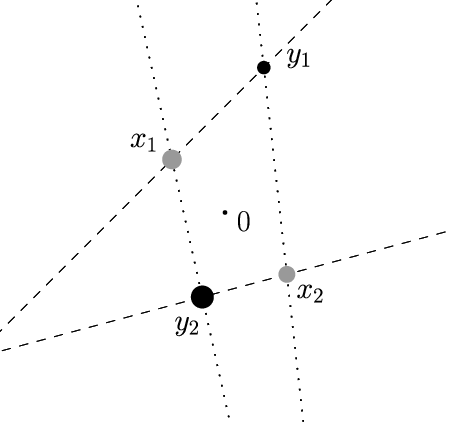}}\hspace{0.7cm}
	\subfloat[]{\includegraphics*[trim={0cm 0cm -0cm -0cm},clip,width=0.28\textwidth]{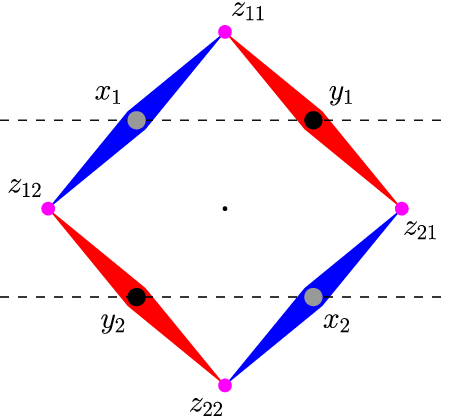}}\hspace{0.7cm}
	\subfloat[]{\includegraphics*[trim={0cm 0cm -0cm -0cm},clip,width=0.28\textwidth]{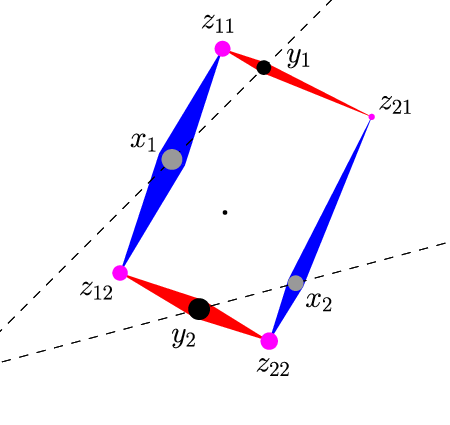}} \\
	\subfloat[]{\includegraphics*[trim={0cm 0cm -0cm -0cm},clip,width=0.28\textwidth]{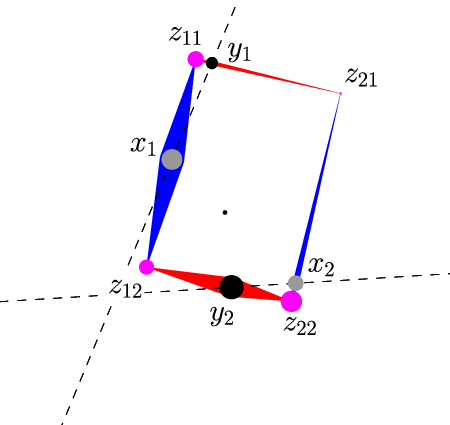}}\hspace{0.7cm}
	\subfloat[]{\includegraphics*[trim={0cm 0cm -0cm -0cm},clip,width=0.28\textwidth]{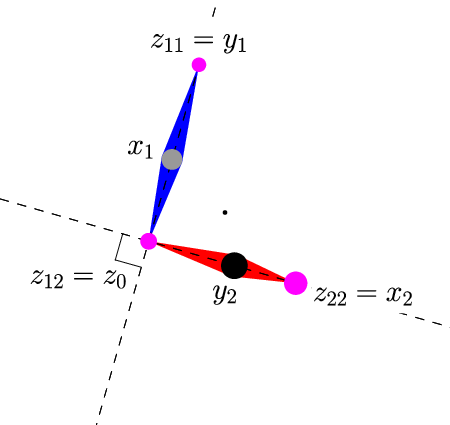}}\hspace{0.7cm}
	\subfloat[]{\includegraphics*[trim={0cm 0cm -0cm -0cm},clip,width=0.28\textwidth]{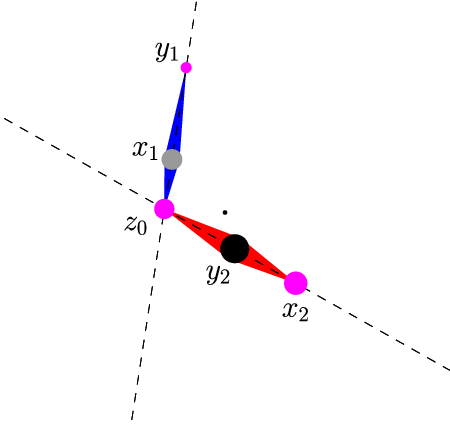}}
	\caption{Data $\mu$ (gray) and $\nu$ (black), optimal $\rho$ (magenta), and optimal $\sigma$ (blue and red for the positive and negative part). (a) generic data in the case (A); (b,c,d) solutions for various data in the case (A); (e) solution for the limit case; (f) solution for data in the case (B).}
	\label{fig:2vs2}       
\end{figure}

\medskip
For both cases (A) and (B), the solutions $\rho$ and $\sigma$ are displayed in Fig. \nolinebreak \ref{fig:2vs2}. The blue colour matches the segments where $\sigma$ is a positive semi-definite rank-one matrix, whilst the red colour matches the negative part.
Figs \ref{fig:2vs2}(b,c,d) correspond to the case (A) where the two lines form an acute angle. Fig. \ref{fig:2vs2}(f) demonstrates case (B) when this angle is obtuse. Finally, Fig. \ref{fig:2vs2}(e) shows the limit case for the right angle. In this case, the mass  at the point $z_{21}$ vanishes, and the solution adheres to the formulas given either for the case (A) or the case \nolinebreak (B).

\subsection{The basic first-order distribution data}

Unlike in the previous examples, here we shall consider a source which is not a measure but the 
 first-order distribution $f^{x,y,z}$ defined in the introduction. It is  supported on the two points $x,y \in \R^2$ and parametrized by the third point $z$,
\begin{equation*}
	f^{x,y,z}= \delta_y - \delta_x - \dive\big((z-y)\,\delta_y - (z-x)\,\delta_x \big).
\end{equation*}
To focus attention we shall assume that the vectors $x -z$ and $y-z$ form an angle ranging in $(0,\pi]$. That is to say that $x\neq y$, while $z$ cannot lie on the line crossing $x,y$ except on the open segment $]x,y[$.

As announced in the introduction, $f^{x,y,z}=\dive^2 \sigma^{x,y,z}$. Namely, it is the source term induced by the measure $\sigma^{x,y,z}$ that serves as an elementary block for building solutions $\sigma$ of the second-order Beckmann problem \eqref{stress} for sources that are measures. 
Since $\sigma^{x,y,z}$ is a competitor in the problem \eqref{stress} for the source $f = f^{x,y,z}$, it is natural to ask if it is optimal for such a basic  first-order distribution data. This short subsection is to settle this issue.

To that aim we exploit the construction of $u$ put forth in Example \ref{ex:2vs2}, case (B). With the polar coordinate system satisfying $\varrho(z) = 0$, $\vartheta(x) = 0$, $\vartheta(y) \in (0,\pi]$, we repeat the construction of $u$ with the parameter $\alpha = \pi/\big(2 \,\angle(x-z,y-z)\big)$. By the property that is analogous to 	\eqref{eq:u_on_segments}, one obtains,
\begin{align*}
	\pairing{u,f^{x,y,z}} &= 	\big[u(y) + \pairing{\nabla u(y),z - y} \big]  - 	\big[u(x) + \pairing{\nabla u(x),z - x} \big]  \\
	&= \tfrac{1}{2} \abs{x-z}^2 + \tfrac{1}{2} \abs{y-z}^2 = c(x,y,z).
\end{align*}
On the other hand, from the proof of Corollary \ref{coro1-intro} we also know that $\int \varrho^0(\sigma^{x,y,z}) = c(x,y,z)$.  Owing to the duality result in Proposition \ref{nogap-classic}, optimality of the pair $(u,\sigma^{x,y,z})$ will follow provided that $u$ is admissible. In view of Lemma \ref{lem:u}, it is the case only if $x-z$ and $y-z$ form an obtuse angle, which is to say that $z$ lies in the disk of the diameter $[x,y]$. We have arrived at the following result:
\begin{proposition}\label{optibasic}
	Assume that $z \in B\big(\frac{x+y}{2}, \frac{|x-y|}{2}\big)$. Then, $\sigma=\sigma^{x,y,z}$ solves the second-order Beckmann problem \eqref{stress} for the first-order distribution data $f = f^{x,y,z}$. Accordingly, we have the equality,
 $$	\mathcal{I}(f^{x,y,z}) = c(x,y,z) .$$
\end{proposition}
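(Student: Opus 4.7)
The plan is to invoke the no-gap duality of Proposition \ref{nogap-classic} by exhibiting a concrete admissible pair $(u,\sigma^{x,y,z})$ whose primal and dual values coincide, both equal to $c(x,y,z)$. A direct check shows that $f^{x,y,z}$ satisfies the balance condition \eqref{genload}: $\int f_0 = 0$ for $f_0 = \delta_y-\delta_x$, and $\int x f_0 + \int F = (y-x) + (z-y) - (z-x) = 0$ for $F = (z-y)\delta_y - (z-x)\delta_x$. Hence Proposition \ref{nogap-classic} applies, and producing a matching pair will suffice.

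For the primal maximizer I would recycle the polar construction of Example \ref{ex:2vs2}, case (B), centred this time at $z$: introduce polar coordinates $(\varrho,\vartheta)$ with pole $z$, orient so that $\vartheta(x)=0$ and $\vartheta(y)\in (0,\pi]$, and define $u$ by the very same piecewise formula, now with the single parameter $\alpha = \pi/\bigl(2\,\angle(x-z,y-z)\bigr)$. By design, $u$ coincides with $\tfrac{1}{2}|\xi-z|^2$ on the ray $L_1$ emanating from $z$ through $x$ and with $-\tfrac{1}{2}|\xi-z|^2$ on the ray $L_2$ emanating from $z$ through $y$, exactly as in \eqref{eq:u_on_segments}.

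Since $u$ is locally $W^{2,\infty}$, the pairing computes as
\[
\pairing{u,f^{x,y,z}} = [u(y)+\pairing{\nabla u(y),z-y}] - [u(x)+\pairing{\nabla u(x),z-x}].
\]
Evaluating the Bregman-type identities \eqref{eq:L1} at $\xi=z$ along $L_1$ and \eqref{eq:L2} at $\xi=z$ along $L_2$, and then summing, I obtain $\pairing{u,f^{x,y,z}} = \tfrac{1}{2}|x-z|^2 + \tfrac{1}{2}|y-z|^2 = c(x,y,z)$. On the dual side the identity $\int \varrho^0(\sigma^{x,y,z}) = c(x,y,z)$ was already recorded in the proof of Corollary \ref{coro1-intro}, its validity relying on the non-overlap of the segments $[z,x]$ and $[z,y]$, which is ensured by the angle hypothesis on the triple.

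The main obstacle, as anticipated in the paragraph preceding the statement, is the admissibility of $u$, namely $u\in C^{1,1}$ with $\Lip(\nabla u)\le 1$; this is precisely where the hypothesis $z\in B\bigl(\frac{x+y}{2},\frac{|x-y|}{2}\bigr)$ enters the picture. By Thales' theorem, this inclusion is equivalent to $\pairing{x-z,y-z}\le 0$, i.e.\ the angle at $z$ is at least $\pi/2$, which is exactly the admissibility criterion supplied by Lemma \ref{lem:u}. Combining everything, the chain
\[
c(x,y,z) = \pairing{u,f^{x,y,z}} \le \I(f^{x,y,z}) = \I'(f^{x,y,z}) \le \int \varrho^0(\sigma^{x,y,z}) = c(x,y,z)
\]
is forced to be an equality throughout, which simultaneously proves the optimality of $\sigma^{x,y,z}$ for \eqref{Ifprime} and the stated identity $\I(f^{x,y,z}) = c(x,y,z)$.
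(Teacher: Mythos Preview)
Your proof is correct and follows essentially the same route as the paper: both recycle the polar construction of Example \ref{ex:2vs2} case (B) with pole at $z$ and parameter $\alpha=\pi/\bigl(2\,\angle(x-z,y-z)\bigr)$, compute $\pairing{u,f^{x,y,z}}=c(x,y,z)$ via the identities \eqref{eq:L1}--\eqref{eq:L2}, invoke $\int\varrho^0(\sigma^{x,y,z})=c(x,y,z)$ from the proof of Corollary \ref{coro1-intro}, and close with Lemma \ref{lem:u} and the duality of Proposition \ref{nogap-classic}. Your version is slightly more explicit (verifying \eqref{genload} and spelling out the sandwich $c\le\I=\I'\le c$), but the argument is the same; like the paper, you tacitly exclude the degenerate cases $z=x$ or $z=y$, which the paper handles separately in the remark that follows.
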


\begin{remark}
	The result above  is valid for $z =x$ (or for $z = y$).  In this case, 
	  $\sigma^{x,y,z}$ is negative (or positive) semi-definite, while the optimal potential is given by $u = - \frac{1}{2} \abs{\argu}^2$ (or $u = \frac{1}{2} \abs{\argu}^2$).
	  
	On the other hand, we stress the fact that $\sigma^{x,y,z}$ is no longer optimal if $z$ is outside of the disc $B\big(\frac{x+y}{2}, \frac{|x-y|}{2}\big)$. Indeed, in this case we can show that $\mathcal{I}(f^{x,y,z}) =  |z-\frac{x+y}{2} | \abs{x-y}$ which is strictly less than $c(x,y,z)$ for such $z$. An exception occurs when $z$ lies on the extension of the segment $[x,y]$. This is due to the cancelling effect between the positive and negative parts of $\sigma^{x,y,z}$.

\end{remark}

\section{The optimal grillage}\label{meca}

We conclude with a section devoted to an application of the results developed in this paper to optimal design in mechanics. Classically, by a \textit{grillage} one understands a planar multi-junction structure whose components are 1D straight bars. Although geometrically identical to \textit{trusses} \cite{bouchitte2008}, a grillage -- typically constituting a bearing structure of a ceiling -- lies in a horizontally oriented plane  and  it is loaded vertically at its junctions. The load causes the bars to bend rather than stretch, ultimately resulting in different equilibrium configurations for the two types of structures.

The optimal design of trusses is famously known to be ill-posed, calling for relaxation in the form of the \textit{Michell problem}  \cite{bouchitte2008,lewinski2019}. We will utilize Corollary \ref{coro1-intro} to prove that, in contrast, optimal grillages do exist provided that the load is a measure.
Despite the vast literature on grillage optimization initiated in  \cite{rozvany1972a}, it seems to be the first result of its kind. Before stating the theorem, we will briefly recall the topic of truss optimization. We will finish with two open problems, including the extension of the existence result to data that are  first-order distributions.

\subsection{Review on truss optimization and Michell problem}
A truss is a particular case of a 2D or 3D elastic solid that decomposes to one-dimensional straight bars. In general, the stress tensor in a solid can be described as a matrix valued measure $\sigma \in \Mes(\Rd;\Sdd)$. It must satisfy the equilibrium equation $-\dive \, \sigma = F$ in  $(\D'(\Rd))^d$ for a system of forces $F \in \Mes(\Rd;\Rd)$. For $\sigma$ to exist, the load $F$ has to be balanced in the following sense: ${\pairing{v_0,F}} =0$ whenever $v_0(x) = Ax + b$ for $b \in \Rd$ and a skew-symmetric $d \times d$ matrix $A$. By a truss we can understand the stress tensors that are of the form:
\begin{equation}
	\label{eq:truss}
	\sigma_\lambda = \iint \sigma^{x,y}   \, \lambda(dxdy), \qquad \lambda \in \Mes\big((\Rd)^2;\R\big),
\end{equation}
where $\sigma^{x,y} = \frac{y-x}{|y-x|} \otimes  \frac{y-x}{|y-x|} \, \Ha^1 \mres [x,y]$ for $x \neq y$, and $\sigma^{x,x} =0$. The positive and negative part  $\lambda_+(dxdy)$, $\lambda_-(dxdy)$  represent, respectively, the tensile and compressive forces in the bars $[x,y]$. 

Optimizing trusses amounts to looking for a measure $\lambda$ that, under the condition of equilibrating $F$, minimizes the total energy, cf. \cite{bouchitte2008}. Energy of a single bar $[x,y]$  that is subject to a unit tensile/compressive force is the total variation $\int \abs{\sigma^{x,y}}=\abs{y-x}$. Accordingly, the optimal truss problem reads,
\begin{equation}
	\label{eq:optimal_truss}
	 \inf\left\{ \iint \abs{y-x} \, \abs{\lambda}(dxdy) \, : \, \lambda \in \Mes\big((\Rd)^2;\R\big)  , \ \ -\dive\, \sigma_\lambda = F  \right\}.
\end{equation}
Note that the support of $\lambda$ can exceed the set $(\spt F)^2$, which is to say that we can add junctions that are not loaded.

In \eqref{eq:optimal_truss} the total mass of $\lambda$ is not controlled, raising the issue of existence. Moreover, in practice
engineers expect that for a finitely supported load $F$ there is a solution $\ov{\lambda}$ that  is finitely supported. This means that the structure can be manufactured as a junction of a finite number of bars. 
Meanwhile, already at the dawn of the 20th century, A.G.M. Michell observed that an optimal truss does not exist even for the simplest loads. In his celebrated paper \cite{michell1904} he considered the \textit{bridge problem} where the data is the system of three vertical forces in the plane $\R^2$,
\begin{equation}
	\label{eq:bridge_problem}
	F = \frac{e_2}{2}  \delta_{e_1}  + \frac{e_2}{2}  \delta_{-e_1}  - e_2  \delta_{0},
\end{equation}
where $e_1=(1,0)$, $e_2 = (0,1)$. Then, looking for finitely supported solutions of \eqref{eq:optimal_truss}  leads to construction of minimizing sequences $\lambda_h$ with the number of points in $\spt \lambda_h$ going to infinity. When taking the weak-* limit $\ov{\sigma}$ of the sequence $\sigma_{\lambda_h}$  one discovers that it is not representable through \eqref{eq:truss}, see Fig. \ref{fig:Michell}(a).
The measure $\ov\sigma$ is a solution of what today is known as the Michell problem,
\begin{equation}
	\label{eq:Michell}
	\min\left\{ \int \varrho^0(\sigma) \, : \, \sigma \in \Mes(\Rd;\Sdd), \ \ -\dive\, \sigma = F  \right\}.
\end{equation}
Recall that $\varrho^0$ is the Schatten norm: $\varrho^0(S) = \sum_{i=1}^d |\lambda_i(S)|$. In the modern measure-theoretic setting the Michell problem was first formulated in \cite{bouchitte2008}. Therein, it was proved that $\inf \eqref{eq:optimal_truss} = \min \eqref{eq:Michell}$.
Once a compactly supported $F$ satisfies the balance condition, the minimum in the Michell problem is attained. From Fig. \ref{fig:Michell}(a) one can discern that solutions may charge  curved curves (the thick lines in the figure). It rules out representing solutions through \eqref{eq:truss}. To address this, the work \cite{bouchitte2008} put forward another formulation where one seeks a signed measure on the space of regular curves, thus allowing for curved bars. To date, the existence issue remains open.

\begin{figure}[h]
	\centering
	\subfloat[]{\includegraphics*[trim={0cm 0cm -0cm -0cm},clip,width=0.32\textwidth]{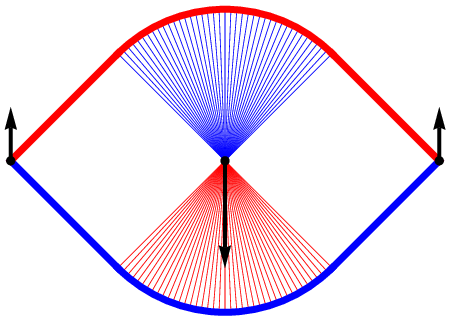}}\hspace{1.5cm}
	\subfloat[]{\includegraphics*[trim={0cm 0cm -0cm -0cm},clip,width=0.32\textwidth]{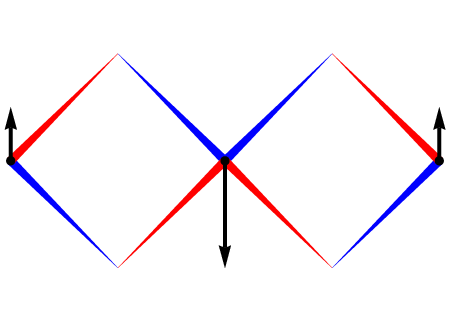}}
	\caption{(a) Michell structure for a finitely supported  system of forces $F$; (b) optimal grillage for a finitely supported torque $f = - \dive\, F$.}
	\label{fig:Michell}       
\end{figure}

\subsection{Optimal grillage via the three-marginal optimal transport}

The other example of a structure that is built from 1D bars is a grillage, and it is a special case of a \textit{plate}. By definition, plates are two-dimensional bodies occupying a horizontal plane $\R^d = \R^2$. In the case of plates, the measure $\sigma \in \Mes(\Rd;\Sdd)$ represents the \textit{bending moment tensor}. The out-of-plane equilibrium of the plates is governed by the equation $\dive^2 \sigma = f$ in $\D'(\Rd)$, where  $f=f_0 -\dive\,F$ is a first-order distribution. The measure $f_0$ models out-of-plane forces. One can think of the positive part $f_{0,+}$ as of the gravity pull, whilst $f_{0,-}$ plays the role of the upward reaction forces. The term $F$ represents torques that act about in-plane axes. The balance condition for the load $f$ reads as in \eqref{genload}.

With the second-order equilibrium equation, the decomposition of the measure $\sigma$ to segments allows for adding affinely varying density. One of the ways of achieving this is through using $\sigma^{x,y,z}$ as the basic measure. It concentrates on the union of segments $[x,z] \cup [z,y]$, see  {Fig. \ref{fig:sigmaxyz}.} Thus, by a grillage we will understand the bending moment tensor of the form,
\begin{equation}
	\label{eq:sigma_pi}
	\sigma_\pi = \iiint \sigma^{x,y,z} \, \pi(dxdydz), \qquad \pi \in \Mes_+\big((\Rd)^3 \big).
\end{equation}
In the case of grillages, $\pi(dxdydz)$ enjoys the interpretation of the \textit{transverse shear force} in the two-bar structure. 
Assuming that the segments $[x,z]$ and $[z,y]$ do not overlap, the energy of this structure is $\int \abs{\sigma^{x,y,z}} = \frac{1}{2}\big(\abs{z-x}^2 + \abs{z-y}^2\big) = c(x,y,z)$. Accordingly, the optimal grillage problem can be formulated  as follows,
\begin{equation*}
	\mathcal{I}_{\mathrm{OG}}(f) := \inf\left\{ \iiint c(x,y,z)\, \pi(dxdydz)\, : \, \pi \in \Mes_+\big((\Rd)^3 \big) , \ \ \dive^2 \sigma_\pi = f  \right\}.
\end{equation*}
Note that $\pi$ is a positive Borel measure that is not necessarily finite. In fact, the condition $\iiint c\,d\pi < \infty$ is sufficient for $\sigma_\pi$ to be a well defined element of the space $\Mes(\Rd;\Sdd)$.

\textit{A priori}, the optimal grillage problem shares the issues of non-compactness that are known for truss optimization. A natural candidate for relaxation is the second-order Beckmann problem \eqref{stress},
\begin{equation*}
	\I(f) = \min
	\biggl\{ \int \varrho^0(\sigma) \ : \ \sigma \in \Mes(\R^d;\Sdd), \ \ \dive^2 \sigma = f  \biggr\}.
\end{equation*} 
The solution is guaranteed to exist provided that $f_0 \in \Mes_2(\Rd)$, and $F \in \Mes_1(\Rd;\Rd)$ satisfy the balance condition \eqref{genload}, see Section \ref{dualclassic}.
Utilizing the subadditivity of the functional $\sigma \mapsto \int \varrho^0(\sigma)$ we can show that $\int \varrho^0(\sigma_\pi) \leq \iiint c \,d\pi$, which  furnishes the inequality,
\begin{equation}
	\label{eq:grillage>Beckmann}
	\mathcal{I}_{\mathrm{OG}}(f) \geq \I(f).
\end{equation}

Historically, the systematic study of optimal grillages was initiated in the engineering paper \cite{rozvany1972a}. Inspired by the theory of Michell structures, the author has tackled the Beckmann problem $\I(f)$ from the outset. However, in the numerous analytical examples worked out in \cite{rozvany1972a} and subsequent works, e.g. \cite{prager1977}, one can discern the grillage-like structure \eqref{eq:sigma_pi} of the optimal solutions $\ov{\sigma}$. Unlike in the Michell problem, the curved bars are not exhibited at optimality.

The next result lays out a foundation for the foregoing observations. Exploiting the novel three-marginal optimal transport formulation developed in this paper, we  show that the optimal grillage problem $\I_{\mathrm{OG}}(f)$ admits a solution when $f$ is a measure, i.e. when the first-order term $-\dive \, F$ is absent. On top of that, we prove that the optimal grillage consists of a finite number of bars once the load $f$ is discrete.

\begin{theorem}
	\label{cor:grillage}
	If the load distribution $f$ is a measure in $\Mes_2(\R^d)$, then the equality $\mathcal{I}_{\mathrm{OG}}(f) = \I(f)$ holds true together with the following statements.
	 	\begin{enumerate}[label={(\roman*)}] 
	 	\item There exists a solution $\ov{\pi}$ of the optimal grillage problem $	\mathcal{I}_{\mathrm{OG}}(f)$, and, for any such solution, $\sigma_{\ov\pi}$ solves the Beckmann problem $\I(f)$. Moreover, $\ov{\pi}$ can be chosen such  that $\ov\pi\big((\Rd)^3 \big) < \infty$, and 
	 	\begin{equation}
	 		\label{eq:inclusion}
	 		\spt \sigma_{\ov{\pi}} \subset  \B(\spt f_+, \spt f_-),
	 	\end{equation}
	 	where $f = f_+ - f_-$ is the Jordan decomposition to the positive and negative part.
	 	\item If, in addition, the measure $f$ is finitely supported, then one can choose a finitely supported solution $\ov\pi$. In particular, 
	 	\begin{equation*}
	 		\sigma_{\ov{\pi}} \ll \Ha^1 \mres G
	 	\end{equation*}
	 	where $G \subset \Rd$ is a graph consisting of at most $2 m n$ segments where $m,n$ is the cardinality of $\spt f_+$, $\spt f_-$, respectively.
	 \end{enumerate}
\end{theorem}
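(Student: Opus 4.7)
\textbf{Proof plan for Theorem \ref{cor:grillage}.}
The strategy is to reduce the existence of an optimal grillage to the existence of an optimal three-plan for $\mathcal{J}(\mu,\nu)$ provided by Theorem \ref{main-intro} and Corollary \ref{coro1-intro}. First, we may assume the load $f\in\Mes_2(\Rd)$ satisfies the balance conditions \eqref{genload} (i.e.\ $\int f=0$ and $\int x\,f=0$); otherwise $\I(f)=+\infty$, and the equality $\mathcal{I}_{\mathrm{OG}}(f)=\I(f)$ holds trivially in view of \eqref{eq:grillage>Beckmann}. Setting $m:=\|f_+\|=\|f_-\|$ (equal by balance), we normalize $\mu:=f_-/m$ and $\nu:=f_+/m$ so that $\mu,\nu\in\PP_2(\Rd)$ share their barycentre, and $f=m(\nu-\mu)$.

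Next, I apply Theorem \ref{main-intro} to obtain an optimal three-plan $\ov\pi{}'\in\Sigma(\mu,\nu)$ achieving $\mathcal{J}(\mu,\nu)=\I(\nu-\mu)$, and define $\ov\pi:=m\,\ov\pi{}'$. This is a \emph{finite} positive measure on $(\Rd)^3$ of total mass $m$. Using the computation from the proof of Corollary \ref{coro1-intro} (that $\dive^2\sigma^{x,y,z}=f^{x,y,z}$ and the equilibrium identities \eqref{equilibrium}), I get
\begin{equation*}
\dive^2 \sigma_{\ov\pi} \;=\; m\!\iiint f^{x,y,z}\,\ov\pi{}'(dxdydz) \;=\; m(\nu-\mu)\;=\;f,
\end{equation*}
so $\ov\pi$ is admissible for the optimal grillage problem. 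Its cost is $\iiint c\,d\ov\pi=m\,\mathcal{J}(\mu,\nu)=m\,\I(\nu-\mu)=\I(f)$, where the last equality uses the $1$-homogeneity of $\I$ in the source. Together with \eqref{eq:grillage>Beckmann}, this gives $\mathcal{I}_{\mathrm{OG}}(f)=\I(f)$ and establishes optimality of $\ov\pi$. Moreover, any optimal $\ov\pi$ for the grillage problem automatically yields an optimal $\sigma_{\ov\pi}$ for the Beckmann problem, since $\int\varrho^0(\sigma_{\ov\pi})\le\iiint c\,d\ov\pi=\I(f)$. Finally, the support inclusion \eqref{eq:inclusion} follows directly by applying Corollary \ref{coro1-intro}(ii) to $\ov\pi{}'$ and noting $\spt\mu=\spt f_-$, $\spt\nu=\spt f_+$.

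For assertion (ii), suppose $f$ is finitely supported, say on $m$ and $n$ points for $f_+$ and $f_-$, respectively; then $\mu,\nu$ are finitely supported probabilities with $\#\spt\mu=n$, $\#\spt\nu=m$. I pick any solution $\ov u$ of \eqref{secondorder} (which exists by Proposition \ref{nogap-classic}), and apply the disintegration formula of Corollary \ref{coro1-intro}(ii) to the three-plan $\ov\pi{}'$ constructed above. This yields
\begin{equation*}
\ov\pi{}'(dxdydz)=\ov\gamma(dxdy)\otimes \delta_{z_{\ov u}(x,y)}(dz),
\end{equation*}
where $\ov\gamma=\Pi_{1,2}^{\#}(\ov\pi{}')$ is a coupling of $\mu$ and $\nu$. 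Since both marginals are finitely supported, $\ov\gamma$ is carried by the finite set $\spt\mu\times\spt\nu$ of cardinality at most $mn$, and hence so is $\ov\pi{}'$ (through the graph of $z_{\ov u}$). Scaling by $m$ preserves finiteness, so $\ov\pi$ is a finite sum of at most $mn$ Dirac masses, say $\ov\pi=\sum_{i,j}\gamma_{ij}\,\delta_{(x_i,y_j,z_{ij})}$, and $\sigma_{\ov\pi}=\sum_{i,j}\gamma_{ij}\,\sigma^{x_i,y_j,z_{ij}}$ is concentrated on the graph $G=\bigcup_{i,j}\big([x_i,z_{ij}]\cup[z_{ij},y_j]\big)$, comprising at most $2mn$ segments.

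The only subtle point, which I view as the main obstacle, is to keep the bookkeeping on scales and marginals straight: the grillage problem does not constrain the marginals of $\pi$, only the distributional equation $\dive^2\sigma_\pi=f$, so it is a \emph{priori} larger than the three-marginal OT problem $\mathcal{J}$. Optimality transfers from $\mathcal{J}(\mu,\nu)$ to $\mathcal{I}_{\mathrm{OG}}(f)$ precisely through the chain $\I(f)\le\mathcal{I}_{\mathrm{OG}}(f)\le\iiint c\,d\ov\pi=\I(f)$, which relies on the zero-gap equality of Proposition \ref{nogap-classic} together with Theorem \ref{main-intro}. Once this is in place, finitude of the support in (ii) is an immediate consequence of the rigid disintegration formula supplied by Corollary \ref{coro1-intro}(ii).
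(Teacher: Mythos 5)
Your proposal is correct and follows essentially the same route as the paper: take an optimal $3$-plan for $\JJ(\mu,\nu)$ from Theorem \ref{main-intro}, use Corollary \ref{coro1-intro} to see that the induced $\sigma_{\ov\pi}$ solves the Beckmann problem and satisfies the support bound, close the chain $\I(f)\le\mathcal{I}_{\mathrm{OG}}(f)\le\iiint c\,d\ov\pi=\I(f)$ via \eqref{eq:grillage>Beckmann}, and invoke the disintegration through $z_{\ov u}$ for the finitely supported case. Your explicit bookkeeping of the mass normalization, the unbalanced case, and the remark that \emph{any} optimal grillage plan yields an optimal Beckmann measure are harmless elaborations of the same argument.
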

\begin{proof}
It is not restrictive to assume that $f = \nu -\mu$ for the probability distributions $\mu,\nu \in \mathcal{P}_2(\Rd)$ that are centred, $[\mu]=[\nu]=0$. Let $\ov\pi \in  \mathcal{P}\big((\Rd)^3\big)$ be a solution of the problem $\mathcal{J}(\mu,\nu)$, see \eqref{OT3}. By the virtue of Corollary \ref{coro1-intro}, $\sigma_{\ov{\pi}}$ solves the second-order Beckmann problem $\I(f)$. In particular, it satisfies the equation $\dive^2 \sigma_{\ov\pi} = f$, so  $\ov{\pi}$ is a competitor in $\I_{\mathrm{OG}}(f)$. Thanks to assertion (i) of Theorem \ref{thm2-intro} and to the inequality \eqref{eq:grillage>Beckmann}, we obtain
	\begin{equation*}
		\iiint c \,d\ov\pi = \mathcal{J}(\mu,\nu) = \I(f) \leq \I_{\mathrm{OG}}(f) \leq \iiint c \,d\ov\pi,
	\end{equation*}
	which proves optimality of $\ov\pi$ together with the equality $\I(f) = \mathcal{I}_{\mathrm{OG}}(f)$. The finiteness of $\ov{\pi}$ is trivial as it is a probability, while the inclusion \eqref{eq:inclusion} is the final assertion of Corollary \ref{coro1-intro}. This concludes the proof of the part (i).
	
	To prove the statement (ii), we assume that $\mu = \sum_{i=1}^m \mu_i \delta_{x_i}$ and $\nu = \sum_{j=1}^n \nu_j \delta_{y_j}$. Let $\ov{u}$ be any solution of the problem \eqref{secondorder}. Then, by assertion (ii) in Corollary \ref{coro1-intro}, the 3-plan $\ov\pi$ must be of the form,
	\begin{equation*}
		\ov\pi = \sum_{i=1}^m \sum_{j=1}^n \gamma_{ij} \delta_{(x_i,y_j,z_{ij})}, \quad \text{where} \quad z_{ij} = \frac{x_i+y_j}{2} + \frac{\nabla \ov{u}(y_j)-\nabla  \ov{u}(x_i)}{2},
	\end{equation*}
	and thus $\sigma_{\ov{\pi}} =  \sum_{i=1}^m \sum_{j=1}^n \gamma_{ij}\, \sigma^{x_i,y_j,z_{ij}}$. The proof is complete since  $\spt \sigma^{x_i,y_j,z_{ij}} \subset [x_i,z_{ij}] \cup [y_j,z_{ij}]$.
\end{proof}

Optimal grillages have  been  already presented in Example \ref{ex:2vs2}, where $\mu$, $\nu$ were two-point measures. The grillages $\sigma_{\ov{\pi}}$ were showed in Fig. \ref{fig:2vs2}, and they consisted of eight or four bars with affinely varying bending moments. 
Handling more complex data $\mu,\nu$ calls for numerical treatment of the three-marginal optimal transport problem \eqref{OT3}. For a discrete load $\mu,\nu$, it can be rewritten as a finite dimensional second-order conic program. Then, it can be tackled using off-the-shelf convex optimization software. 

\begin{example}[\textit{discrete load}]
	Here we present an optimal grillage found numerically for the discrete load $f=\nu-\mu$ as in Fig. \ref{fig:num1}(a). The measure $\mu$ is uniformly distributed on a grid of $29 \times 29$ points, simulating the gravity pull coming from a square concrete slab. The five equal reaction forces in the columns are encoded by $\nu$. The numerical simulation of an optimal grillage $\sigma_{\ov\pi}$  is showed in Fig. \ref{fig:num1}(c). Meanwhile,
	Fig. \ref{fig:num1}(b) presents the probability $\ov\rho$ solving the problem $\V(\mu,\nu)$.
\end{example}

\begin{figure}[h]
	\centering
	\subfloat[]{\includegraphics*[trim={0cm 0cm -0cm -0cm},clip,width=0.3\textwidth]{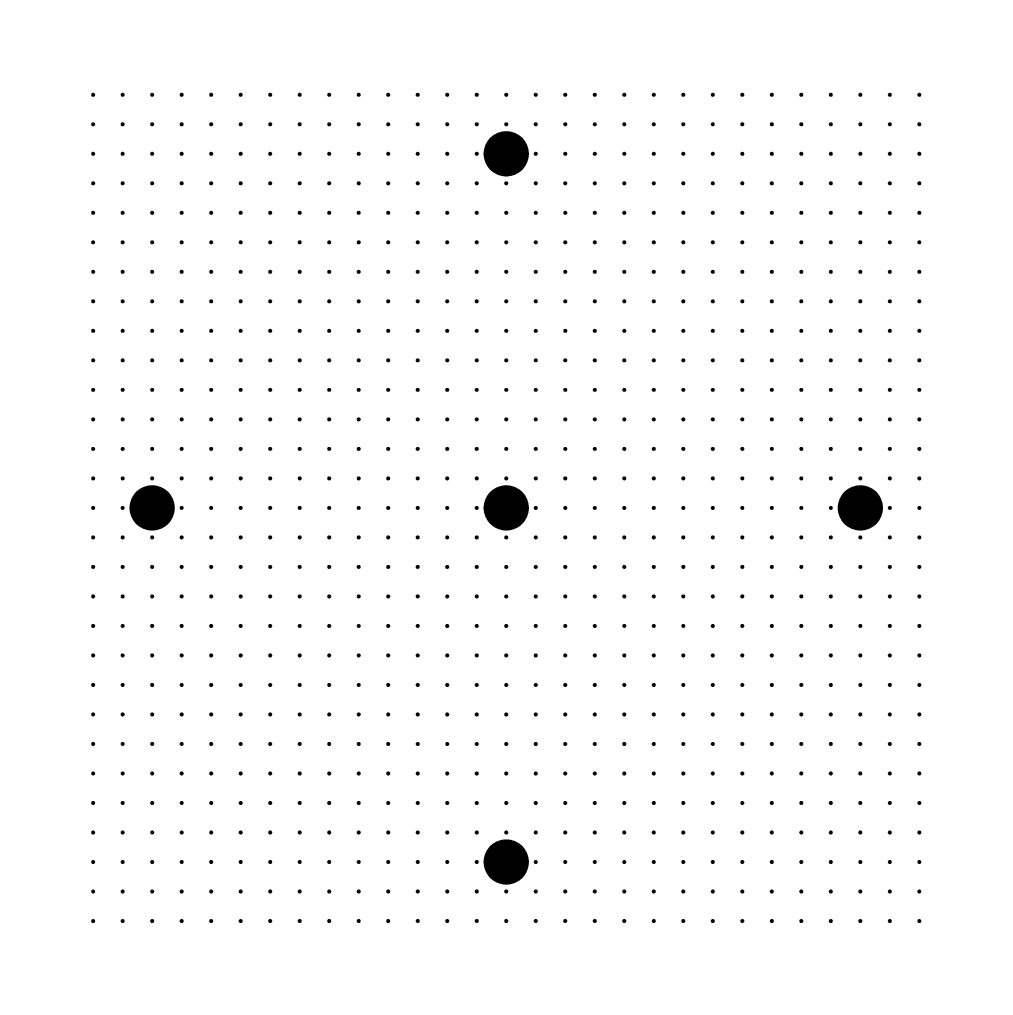}}\hspace{.5cm}
	\subfloat[]{\includegraphics*[trim={0cm 0cm -0cm -0cm},clip,width=0.3\textwidth]{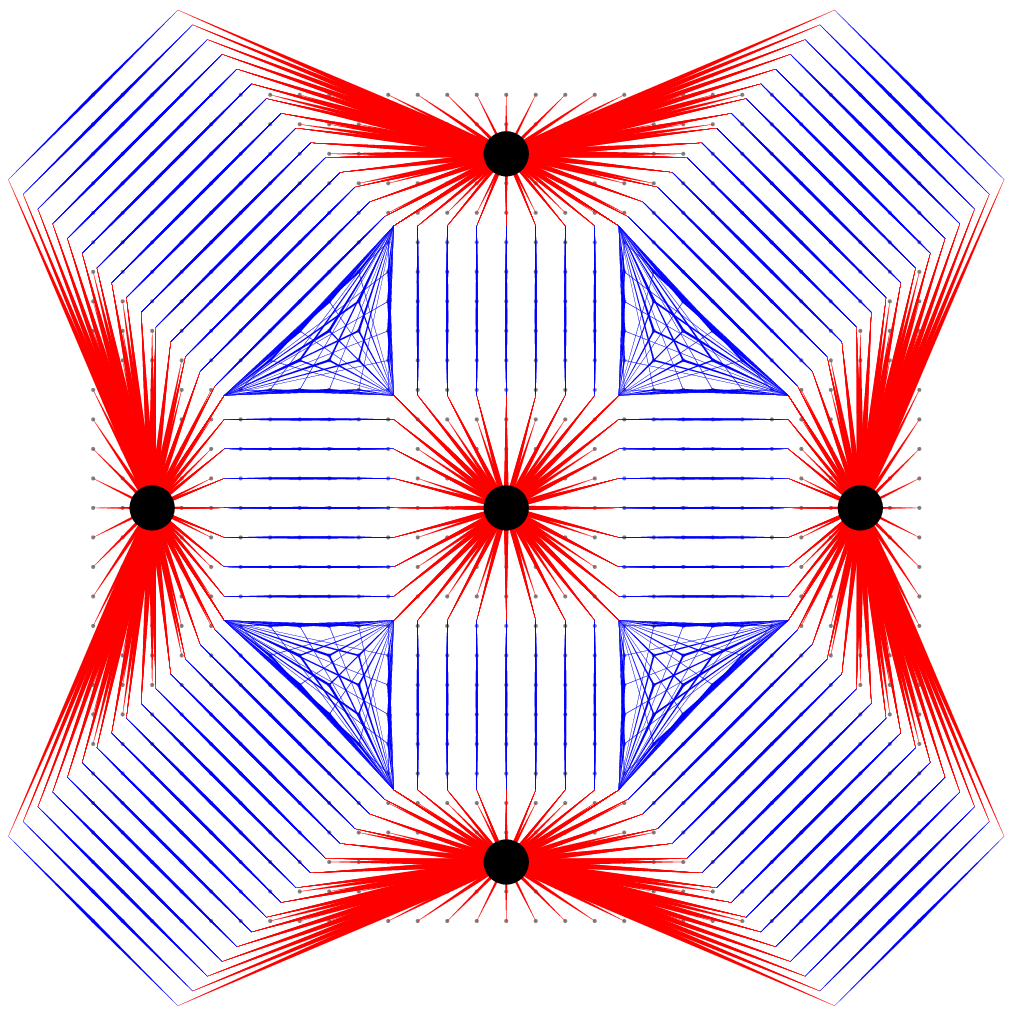}}\hspace{.5cm}
	\subfloat[]{\includegraphics*[trim={0cm 0cm -0cm -0cm},clip,width=0.3\textwidth]{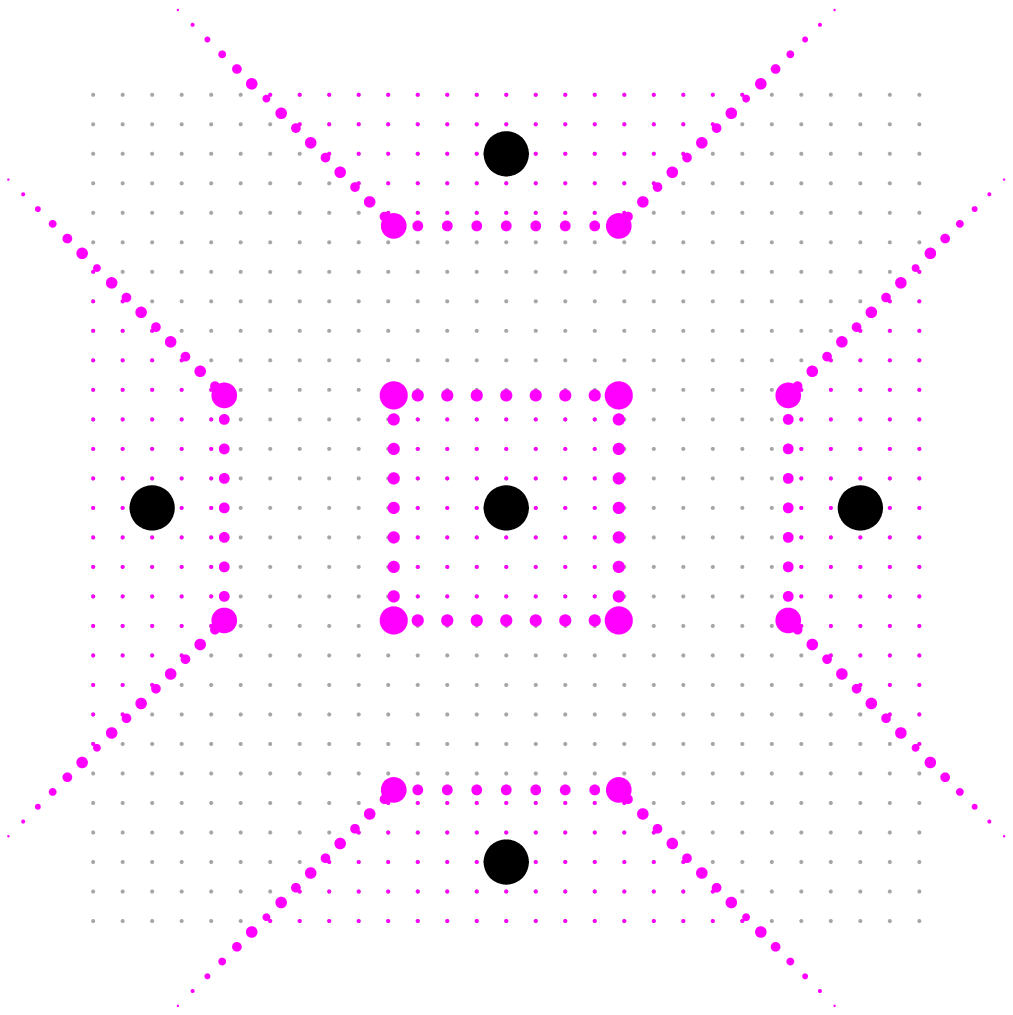}}
	\caption{Numerical solution of the optimal grillage problem: (a) finitely supported data $\mu, \nu$; (b) optimal grillage $\sigma_{\ov\pi}$ where  blue and red indicate, respectively, the positive and the negative part; (c) solution $\ov\rho$ of the optimal dominance problem $\V(\mu,\nu)$.}
	\label{fig:num1}       
\end{figure}

\begin{example}[\textit{continuous load}] \label{ex:continuous}
	In engineering practice, it is typical to assume that the weight of a slab is transferred to the grillage through a finite system of point loads, as demonstrated in the previous example. Nonetheless, it is natural to explore the optimal grillage problem also when the load is continuous: $\mu = \mathcal{L}^2 \mres Q$, where $Q$ is the unit square, see Fig. \ref{fig:num2}(a). Numerically, it comes down to a fine discretization of $\mu$, here by a $113 \times 113$ mesh. Fig. \ref{fig:num2}(c) shows the approximation $\sigma_{\pi_h}$ of an optimal grillage $\sigma_{\ov\pi}$. It is clear that the support of $\sigma_{\ov\pi}$ exceeds the square $Q$, but is contained within the set $ \B(\spt \mu,\spt \nu)$. A prediction of the exact solution $\ov\rho$ for the optimal dominance problem $\V(\mu,\nu)$ is presented in  Fig. \ref{fig:num2}(b).  Based on the numerical simulation the authors expect that in the five quadrilateral regions $\ov{\rho}$ is equal to $\mu$, i.e. to the Lebesgue measure. Partially on their boundaries, there is a part of $\ov\rho$ that is absolutely continuous with respect to $\Ha^1$. Finally, at the vertices, there are concentrations in the form of Dirac delta masses.	
\begin{figure}[h]
	\centering
	\hbox{{\vbox{\offinterlineskip\halign{#\hskip3pt&#\cr
					\centering
					\subfloat[]{\includegraphics*[trim={0cm 0cm -0cm -0cm},height=0.3\textwidth]{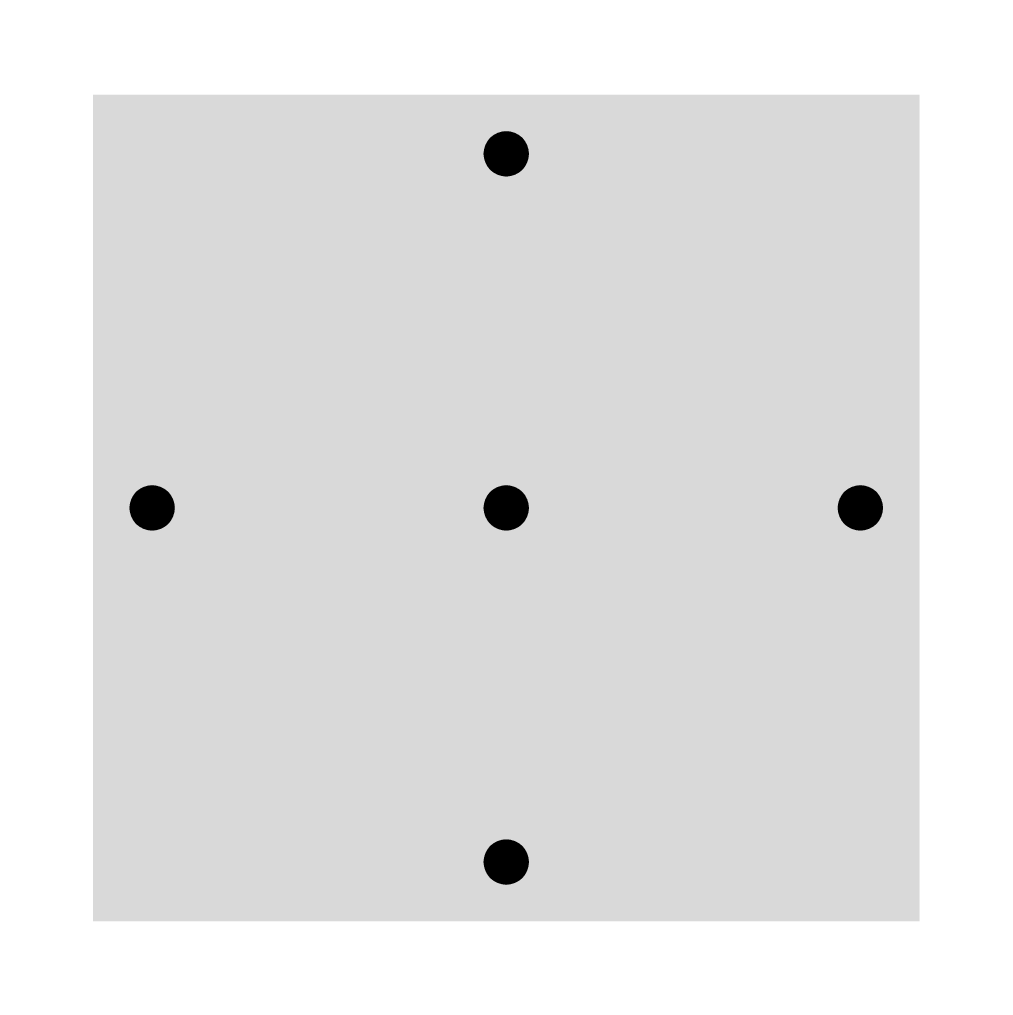}}\cr
					\noalign{\vskip3pt}
					\subfloat[]{\includegraphics*[trim={0cm 0cm -0cm -0cm},width=0.3\textwidth]{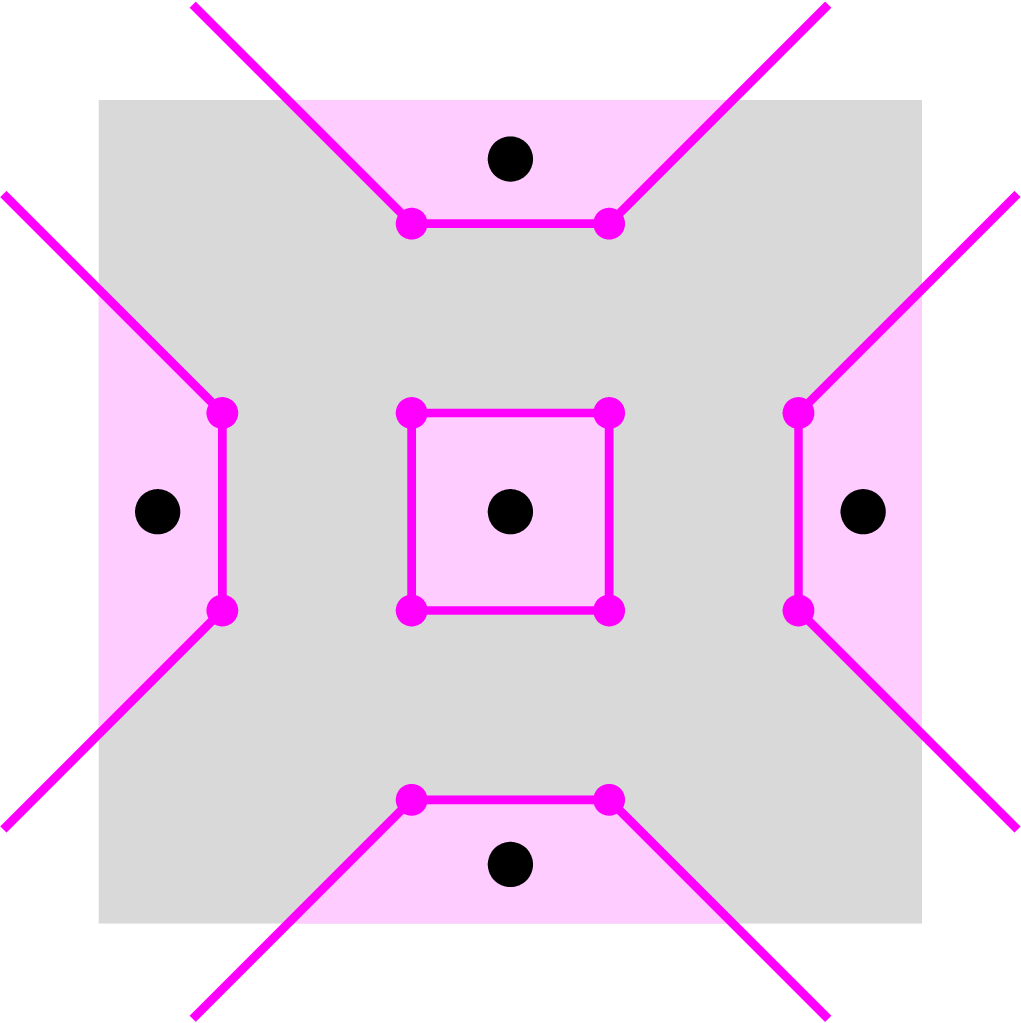}}\cr
		}}}
		\hspace{0.4cm}
		\subfloat[]{\includegraphics*[trim={0cm -0cm -0cm -0cm},clip,width=0.65\textwidth]{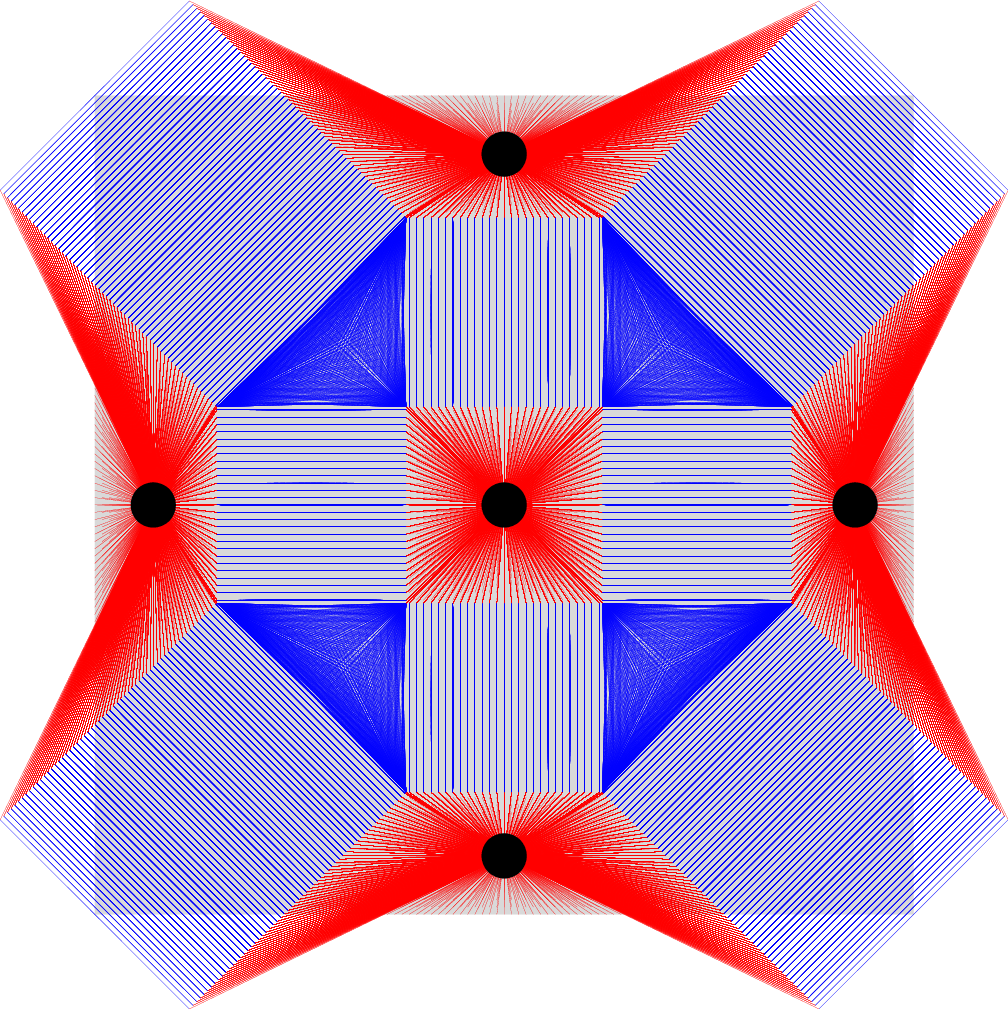}}}
	\caption{(a) Absolutely continuous loading $\mu$ versus discrete reactions $\nu$; (b) prediction of solution $\ov\rho$ of the problem $\V(\mu,\nu)$ consisting of 2D, 1D, and atomic part; (c) numerical approximation of an optimal grillage $\sigma_{\ov{\pi}}$ via a  fine discretization of $\mu$.}
	\label{fig:num2}       
\end{figure}
\end{example}

\subsection{Open problems}

\subsubsection{Loads that are general first-order distributions}

Generalization of Theorem \ref{cor:grillage} towards general first-order distributions $f = f_0 - \dive\, F$ is not straightforward. Unlike $f_0 = \nu -\mu$, the vector measure $F$ does not admit a natural decomposition to a pair of measures. It makes it difficult to propose a generalization of the set $\Sigma(\mu,\nu)$, and thus to find the right optimal transport formulation like \eqref{OT3} whose solution is guaranteed to exist.

The situation improves when the supports of the measures $f_0,F$ are finite. It is then possible to prove that there exists a finitely supported solution of the optimal grillage problem $\I_{\mathrm{OG}}(f)$. The main argument is using \textit{the minimal extensions of jets} put forth in \cite{legruyer2009}. We skip the details here, and, instead, in Fig. \ref{fig:Michell}(b) we show the optimal grillage for $f_0 =0$ and  $F$ as in the bridge problem, see \eqref{eq:bridge_problem}. Note that the mechanical nature of $F$ differs for trusses ($F$ are forces) and grillages ($F$ are torques).

If the measure $F$ is not finitely supported, the issue of existence is more subtle. The authors found examples of $F$ that charge a curved curve for which existence of solutions $\ov{\pi}$ to the optimal grillage problem $\I_{\mathrm{OG}}(-\dive F)$ must imply that $\ov{\pi}\big((\R^2)^3\big) = \infty$. The infinite mass of $\ov{\pi}$ makes it possible to construct  infinite chains of straight bars whose lengths tend to zero, while their thickness is bounded from below by a positive constant. Such chains seem to open the door to forming solutions $\ov{\pi}$ for such data $F$.
Ultimately, the optimal grillage problem for data that are general first-order distributions is not well understood at the moment, and it remains to leave the reader with the following question:

\begin{prob}
	Assume that $f = f_0 - \dive\, F$ where $(f_0,F) \in \Mes_2(\Rd) \times \Mes_1(\Rd;\Rd)$, and that the support of $F$ is infinite. Does  the optimal grillage problem $\I_{\mathrm{OG}}(f)$ admit a solution?
\end{prob}

\subsubsection{Domain confinement}

In practical applications, engineers often work within a prescribed  design domain $\Omega$, a bounded open and connected subset of $\Rd$. For instance, a natural choice for $\Omega$ in Example \eqref{ex:continuous} is the square $Q = \spt \mu$ being the outline of a ceiling. 
The domain confinement can be easily accounted for in the optimal grillage problem $\I_{\mathrm{OG}}(f)$ by adding the constraint $\spt \sigma_\pi \subset \Ob$. Assuming that the load $f$ is a measure, from assertion (i) of Corollary \ref{cor:grillage} we can see that the whole result holds true provided that,
\begin{equation}
	\label{eq:Omega}
	\B(\spt f_+, \spt f_-) \ \subset \ \ov{\Omega}.
\end{equation}
In this case, the constraint $\spt \sigma_\pi \subset \Ob$ is not binding. If the inclusion \eqref{eq:Omega} is not satisfied, then one should work within the framework of the second-order Beckmann problem, whose modification now reads,
\begin{equation*}
	\mathcal{I}(f,\Omega)  := \min \biggl\{ \int \varrho^0(\sigma) \ : \ \sigma \in \Mes(\R^d;\Sdd),  \ \ \spt \sigma \subset \Ob,  \ \ \dive^2 \sigma = f  \biggr\}.
\end{equation*}
Numerical experiments in 2D indicate that, with the condition \eqref{eq:Omega} violated, there might not be solutions of $\I(f,\Omega)$ which take the form $\sigma_\pi$. It appears that optimal $\ov{\sigma}$ may charge subsets of the boundary $\partial \Omega$ with the density being a full-rank matrix. In terms of mechanics, it corresponds to 1D bars (possibly curved) subject not only to bending moments but also to torsion. In the interior $\Omega$, however, the solution seems to decompose to straight bars $\sigma^{x,y,z}$. These observations lead to the following open problem:
\begin{prob}
	Assume a bounded domain $\Omega \subset \Rd$ with Lipschitz regular boundary and a load $f \in  \Mes(\Ob)$. Do there exist  $\sigma_\bO \in \Mes(\R^d;\Sdd)$ concentrated on $\bO$ and $\pi \in \Mes_+(\ov\Omega ^3)$  such that,
	\begin{equation*}
		\ov{\sigma} =  \iiint \sigma^{x,y,z} \, \pi(dxdydz) + \sigma_\bO
	\end{equation*}
	solves the confined  second-order Beckmann  problem $\mathcal{I}(f,\Omega)$?
\end{prob}

\smallskip

\appendix

\section{Convex analysis}\label{app:convex}  \ Let $X$ be a normed space and let $h: X\to \R\cup \{+\infty\}$ be a convex function.
Recall that the Moreau-Fenchel conjugate of $h$ is defined on the dual space $X^*$ by,
$$  h^{*} (x^*)  := \sup_{x\in X}  \big\{ \pairing {x,x^*} - h(x) \big\}\qquad \forall\, x^*\in X^* .$$
Clearly, $h^*$ is convex and lower semi-continuous with respect to the weak-* topology on $X^*$.
Next, we define the biconjugate of $h$ on $X$ by,
$$  h^{**} (x)  :=  \sup_{x^*\in X^*}  \big\{ \pairing {x,x^*} - h^*(x^*) \big\}\qquad \forall \,x\in X .$$
The following classical result (due to J.J.
 Moreau \cite{Moreau1966} in the infinite dimensional case)  is used several times in this paper. 
\begin{proposition}
	\label{duality_classical}
	Assume that there exists $r>0$ such that
	$   \sup \{ h(x) \, :\, \|x\| \le r \} < +\infty .$
	Then:
	\begin{itemize}
\item [(i)]  $h$ is continuous at $0$, while $h^*$ is coercive and attains its minimum on $X^*$;
\item [(ii)]  we have the equalities:  $ h(0) = h^{**}(0) = - \min h^{*}.$
\end{itemize}

\end{proposition}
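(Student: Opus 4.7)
The plan is to treat assertions (i) and (ii) in the natural order, starting from the local upper bound hypothesis on $h$ and building up the needed continuity, coercivity, and biconjugation facts.

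First I would establish that $h$ is continuous at $0$. By the hypothesis we have a constant $M$ with $h(x)\le M$ on the ball $B(0,r)$, and since $h$ is proper with $h(0)<+\infty$, convexity applied to the decomposition $0=\tfrac{1}{2}x+\tfrac{1}{2}(-x)$ immediately gives a lower bound on $h$ on $B(0,r)$, namely $h(x)\ge 2h(0)-M$. Then for $x\in B(0,\lambda r)$ with $\lambda\in(0,1)$, writing $x=\lambda(x/\lambda)+(1-\lambda)\cdot 0$ and its symmetric counterpart, the classical two-sided estimate $|h(x)-h(0)|\le \lambda\,(M-h(0))$ falls out of convexity, yielding continuity of $h$ at $0$. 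Next, coercivity of $h^*$ is a one-line consequence of the upper bound: for every $x^*\in X^*$,
\begin{equation*}
h^*(x^*)\ \ge\ \sup_{\|x\|\le r}\bigl(\pairing{x,x^*}-h(x)\bigr)\ \ge\ r\,\|x^*\|-M.
\end{equation*}

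For the attainment of $\min h^*$, I would invoke the fact that $h^*$ is weakly-$*$ lower semicontinuous on $X^*$ (as a supremum of weakly-$*$ continuous affine functionals) and that, by the coercivity estimate above, every sublevel set $\{h^*\le c\}$ is norm-bounded and weakly-$*$ closed, hence weakly-$*$ compact by Banach-Alaoglu. The infimum is therefore attained.

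The equality $h^{**}(0)=-\min h^*$ is then just the definition of the biconjugate at $0$ together with this attainment. The remaining and main point is to prove $h(0)=h^{**}(0)$. The inequality $h^{**}\le h$ is always valid from the definition of $h^*$. The opposite inequality at $0$ is the substantive step, and this is where I expect the main technicality: I would use a Hahn-Banach separation argument. Precisely, assume for contradiction that $h^{**}(0)<h(0)$, pick $\alpha$ strictly between, and consider in $X\times\R$ the epigraph $E=\{(x,t):t\ge h(x)\}$ and the point $(0,\alpha)$. Because $h$ is continuous at $0$, $E$ has nonempty interior and $(0,\alpha)\notin E$; Hahn-Banach then provides $(x^*,\beta)\in X^*\times\R$ separating them. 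Standard normalization (ruling out $\beta=0$ by the continuity of $h$ at $0$, which forces an affine minorant) produces an affine functional $\ell(x)=\pairing{x,x^*}-c\le h(x)$ with $\ell(0)>\alpha$; by the definition of $h^*$ this $c$ satisfies $c\ge h^*(x^*)$, so $\ell(0)=-c\le -h^*(x^*)\le h^{**}(0)<\alpha$, a contradiction. This closes the proof of (ii), and combined with the earlier items finishes the proposition.
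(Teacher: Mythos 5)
Your argument is correct, but note that the paper itself gives no proof of this proposition: it is stated as a classical result and attributed to Moreau \cite{Moreau1966}, so there is no internal proof to compare against. What you wrote is the standard self-contained argument, and each step is sound: the two-sided convexity estimate giving continuity at $0$ from local boundedness, the coercivity bound $h^*(x^*)\ge r\|x^*\|-M$, weak-$*$ lower semicontinuity of $h^*$ plus Banach--Alaoglu (valid for the dual of any normed space) for attainment, and Hahn--Banach separation of $(0,\alpha)$ from the epigraph --- which has nonempty interior precisely because $h$ is bounded above on $B(0,r)$ --- to get $h(0)\le h^{**}(0)$.

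The only loose end is logical ordering rather than substance: to assert that $h^*$ \emph{attains its minimum} (with a finite value, as needed for $h(0)=-\min h^*$) you should know that $h^*$ is proper, i.e.\ that some sublevel set $\{h^*\le c\}$ is nonempty, before invoking compactness. In your write-up the affine minorant of $h$, which yields $h^*(x^*)\le c<+\infty$ for some $x^*$, only appears inside the contradiction argument of (ii). The fix is immediate: run the same separation unconditionally, separating $(0,h(0)-1)\notin E$ from the epigraph $E$ (whose interior is nonempty), and rule out a vertical separating functional exactly as you do; this produces a continuous affine minorant of $h$, hence properness of $h^*$, after which your Alaoglu argument and the biconjugation identity go through as stated.
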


\section{Integration by parts } 
\label{app:byparts}

We give here the justification of the integration by parts formula on the whole $\R^d$ that was required in Section \ref{Prelim}  (see \eqref{byparts}) and in the proof of Proposition \ref{positive}.

\begin{lemma}\label{lemma:byparts}  Let  $f = f_0 - \dive\, F$, where
 $(f_0, F)$ is any pair in $\Mes_2(\R^d)\times\Mes_1(\R^d;\R^d)$ satisfying \eqref{genload}. Let $\sigma \in \Mes(\R^d;\Sdd)$ 
 satisfy  $\dive^2 \sigma = f$ in  $\D'(\R^d)$.
 Then, for every $u\in C^2(\R^d)$ with $\Lip(\nabla u)<+\infty$, we have,
\begin{equation}\label{Rd-byparts}
 { \pairing {\nabla^2 u, \sigma}}  =   \pairing{u,f}  = \pairing{u,f_0} + \pairing{\nabla u, F}.
\end{equation}
\end{lemma}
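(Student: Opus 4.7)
The plan is to reduce the identity \eqref{Rd-byparts} to the distributional definition of $\dive^2 \sigma = f$ via a two-step approximation: first truncate $u$ by a smooth cutoff, then mollify to land in $\D(\R^d)$, and finally pass to the two limits in order. Before doing anything, I would record that every pairing in \eqref{Rd-byparts} is absolutely convergent. The hypothesis $\Lip(\nabla u)<+\infty$ forces
$|u(x)|\le |u(0)|+|\nabla u(0)|\,|x|+\tfrac{1}{2}\Lip(\nabla u)\,|x|^{2}$ and $|\nabla u(x)|\le |\nabla u(0)|+\Lip(\nabla u)\,|x|$, so $u\in X_{2}(\R^{d})$ and $\nabla u \in X_{1}(\R^{d};\R^{d})$; together with $f_{0}\in\Mes_{2}(\R^{d})$ and $F\in \Mes_{1}(\R^{d};\R^{d})$ this makes the right-hand side finite. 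On the left, $\nabla^{2}u$ is bounded (by $\Lip(\nabla u)$) and $\sigma$ is a bounded Radon measure, so the integral is well defined.

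Fix a cutoff $\chi\in \D(\R^{d};[0,1])$ with $\chi\equiv 1$ on $B(0,1)$ and $\spt\chi \subset B(0,2)$, and set $\chi_{n}(x):=\chi(x/n)$ and $u_{n}:=u\,\chi_{n}\in C^{2}_{c}(\R^{d})$. Mollifying gives $u_{n}^{\e}:=u_{n}\star \rho_{\e}\in \D(\R^{d})$ with $u_{n}^{\e}\to u_{n}$ and $\nabla^{2}u_{n}^{\e}\to \nabla^{2}u_{n}$ uniformly on $\R^{d}$ (since $u_{n}\in C^{2}_{c}$). For $\varphi=u_{n}^{\e}\in \D(\R^{d})$, the hypothesis $\dive^{2}\sigma = f$ and the definition $f=f_{0}-\dive F$ yield
\begin{equation*}
\int \pairing{\nabla^{2}u_{n}^{\e},\sigma}\ =\ \pairing{u_{n}^{\e},f}\ =\ \pairing{u_{n}^{\e},f_{0}}+\pairing{\nabla u_{n}^{\e},F}.
\end{equation*}
All convergences being uniform with supports in a fixed compact set, sending $\e\to 0$ produces \eqref{Rd-byparts} with $u_{n}$ in place of $u$.

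The remaining step is the limit $n\to\infty$, which I expect to be the only nontrivial point. Expanding by the product rule,
\begin{equation*}
\nabla^{2}u_{n}\ =\ \chi(\argu/n)\,\nabla^{2}u\ +\ \tfrac{2}{n}\,\nabla u\symtens \nabla\chi(\argu/n)\ +\ \tfrac{1}{n^{2}}\,u\,\nabla^{2}\chi(\argu/n),
\end{equation*}
with an analogous formula for $\nabla u_{n}$. On the support of $\nabla\chi(\argu/n)$ one has $|x|\le 2n$, and the growth estimates above give $\tfrac{1}{n}|u(x)\nabla\chi(x/n)|\le C(1+|x|)$ and $\tfrac{1}{n^{2}}|u(x)\nabla^{2}\chi(x/n)|\le C$ uniformly in $n$, while $\tfrac{1}{n}|\nabla u(x)\otimes \nabla\chi(x/n)|\le C$. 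Consequently, the three integrands are pointwise convergent to $\pairing{\nabla^{2}u,\sigma}$, $u$, and $\nabla u$ respectively, and each is dominated uniformly in $n$ by an element of $L^{1}(|\sigma|)$, $L^{1}(|f_{0}|)$, $L^{1}(|F|)$ (using that $1+|\argu|\in L^{1}(|F|)$ and $1+|\argu|^{2}\in L^{1}(|f_{0}|)$). The main technical point is thus to verify these uniform bounds on the two cross terms arising from the product rule; once this is done, dominated convergence (equivalently, the criterion \eqref{domiCV} applied to $u-u_{n}$ and $\nabla u-\nabla u_{n}$) gives \eqref{Rd-byparts}.
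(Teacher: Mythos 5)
Your argument is correct and follows essentially the same route as the paper's proof: reduce to test functions by combining a smooth cutoff at scale $n$ with mollification, then pass to the limit using the growth bounds $|u(x)|\le C(1+|x|^2)$, $|\nabla u(x)|\le C(1+|x|)$, the boundedness of $\nabla^2u$, and the integrability hypotheses on $f_0$, $F$, $\sigma$ to dominate the cutoff cross terms. The only cosmetic differences are the order of the two approximations and that you work with general growth constants instead of first invoking \eqref{genload} to normalize $u(0)=\nabla u(0)=0$, which changes nothing essential.
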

\begin{proof} By the orthogonality conditions \eqref{genload}, \eqref{Rd-byparts} is valid for affine functions. Therefore, it is not restrictive to assume that $u$ and $\nabla u$ vanish at $0$; we may also assume that $\Lip(\nabla u)\le 1$.
On the other hand, the  equality $\dive^2\sigma=f$ in the sense of distributions implies that  \eqref{Rd-byparts} holds true if $u\in \D(\R^d)$. By using smooth convolution kernels, this can be extended to $u\in C^2(\R^d)$ that are compactly supported in $\R^d$. In order to remove the latter condition,   we consider a sequence of radial  cut-off functions $\eta_k(x) := \eta(\frac{|x|}{k})$ where,
 $$\eta\in \D(\R;[0,1]),\qquad \eta(t)=1 \quad \text{if\quad$|t|\le k$} ,\qquad \eta(t)=0 \quad \text{if\quad$t\ge 2k$}.$$
Then, we set $u_k := u\, \eta_k$.  Since  $u_k $  satisfies  \eqref{Rd-byparts}, we have only to check that the sequence $(v_k)$, given by $v_k := u- u_k= (1-\eta_k) u$,    satisfies,
\begin{equation}\label{claim-truncature}
\pairing{v_k,f_0}\to 0,\qquad \pairing{\nabla v_k,F}\ \to 0, \qquad \pairing{\nabla^2 v_k,\sigma} \to 0 .
\end{equation}
Since $v_k(x)$ vanishes for $|x|\le k$, while $|v_k(x)| \le |(u(x)| \le \frac1{2} |x|^2$  elsewhere, we infer that 
$(v_k)$ is bounded in $X_2(\R^d)$. Hence, $\pairing{v_k,f_0}\to 0 $ by applying \eqref{domiCV} with $\mu=f_0 \in \Mes_2(\R^d)$.
In the same way, $\nabla v_k$ is supported on the subset $\{k\le |x|\le 2k\}$, where it satisfies the upper bound,
\begin{align*}
	|\nabla v_k(x)| &= \left|\big(1-\eta_k(x)\big) \nabla u(x) -  \frac1{k}  u(x)\,  \eta' \Big(\frac{|x|}{k}\Big) \frac{x}{\abs{x}}\right| \\
	&\le |\nabla u(x)| + \frac{\Lip(\eta)}{k} |u(x)|  \le \big(1 +  \Lip(\eta)\big)\, |x| .
\end{align*}
In the last inequality we used the fact that $|\nabla u(x)| \le |x|$, and  $  |u(x)| \le \frac1{2} |x|^2 \le k |x| $
on $\spt (\nabla v_k)$. 
It follows that  $(\nabla v_k)$ is bounded in $X_1(\R^d;\Rd)$ and, recalling that  $F\in \Mes_1(\R^d;\R^d)$, we may apply 
\eqref{domiCV} to infer that $\pairing{\nabla v_k,F} \to 0$.
Next,  we see that\  $\nabla^2 v_k= (1-\eta_k) \nabla^2 u  - (\nabla \eta_k \otimes \nabla u +\nabla u\otimes \nabla \eta_k
+ u  \nabla^2 \eta_k)$ where:
\begin{align*}
	\nabla  \eta_k (x) &=  \frac1{k} \eta'\Big(\frac{|x|}{k}\Big) \frac{x}{|x|},\\
	 \nabla^2 \eta_k(x) &=\frac1{k^2} \eta''\Big(\frac{|x|}{k}\Big) \frac{x\otimes x}{|x|^2} + \frac1{k|x|} \eta' \Big(\frac{|x|}{k}\Big)\left( \mathrm{Id} - \frac{x\otimes x}{|x|^2}\right).
\end{align*}
 We notice  that:
 \begin{enumerate}
\item[-] for any $x \neq 0$ there hold the bounds $\varrho(\nabla \eta_k \otimes \nabla u +\nabla u\otimes \nabla \eta_k)(x) \le 2|\nabla u(x)|\frac{\Lip(\eta)}{k}$ and $\varrho\big(\nabla^2 \eta_k(x)\big) \leq \frac{\Lip(\eta')}{k^2} + \frac{\Lip(\eta)}{k \abs{x}}$;

\item[-] $\nabla \eta_k$ and $\nabla^2 \eta_k$ are supported on $\{ k\le |x|\le 2k\}$, where it holds that $|u(x)|\le \frac1{2} |x|^2\le 2 k^2$ and $|\nabla u(x)|\le  |x|\le 2 k$.
\end{enumerate}
All in all,  we obtain a uniform upper bound for $\varrho(\nabla^2 v_k)$ whose support is contained  in $\{k\le |x| \le 2k\}$,
\begin{align*} \varrho\big(\nabla^2 v_k(x)\big) &\le  \varrho\big(\nabla^2u(x)\big)  + \varrho\big(\nabla \eta_k \otimes \nabla u +\nabla u\otimes \nabla \eta_k\big)(x)
+ |u(x)|\, \varrho\big(\nabla^2 \eta_k(x)\big) \\
&\le 1 +  2\,|\nabla u(x)|\frac{\Lip(\eta)}{k} + |u(x)| \Big( \frac{\Lip(\eta')}{k^2} + \frac{\Lip(\eta)}{k \abs{x}} \Big)\\
& \le 1 +  2 \,\Lip(\eta) \frac{|x|}{k} + \frac1{2} \Lip(\eta') \frac {|x|^2}{k^2} + \frac1{2} \Lip(\eta) \frac {|x|}{k} \\
&\le C:=1 + 5 \, \Lip(\eta) + 2 \,\Lip(\eta').
\end{align*} 
By virtue of the  inequality  $|\pairing{\nabla^2 v_k,\sigma}|\, \le\,  \varrho(\nabla^2 v_k)\, \varrho^0(\sigma) \le C  \varrho^0(\sigma) $ 
holding in  the sense of measures,
it follows that,
$$  |\pairing{\nabla^2 v_k,\sigma}|  \le  C \int_{\{k\le |x| \le 2k\}}  \varrho^0(\sigma) .
$$
Since $\int_{\R^d} \varrho^0(\sigma)<+\infty$, we conclude that $\pairing{\nabla^2 v_k,\sigma}\to 0$ for $k\to\infty$
as required in \eqref{claim-truncature}.
This ends the proof.
\end{proof}

\section{two-point measures -- Additional proofs}
\label{app:examples}

\begin{proof}[Proof of Lemma \ref{lem:positive_rij}]
	It is not restrictive to assume that $\pairing{x_1,y_1} \geq 0$. It is then easy to check that inequality \eqref{eq:angles2} is met automatically, thus we can focus on \eqref{eq:angles1} only. Next, we can enforce the orientation of the eigenvector $b$ so that:
	\begin{equation}
		\label{eq:ny1}
		\pairing{b,y_1} > 0, \quad \pairing{b,y_2} < 0, \quad \pairing{b,x_1} \geq 0, \quad \pairing{b,x_2} \leq 0.
	\end{equation}
	Accordingly, one can easily check that $\gamma_{ij} >0$ when $i = j$, no matter if inequality \eqref{eq:angles1} holds or not. Therefore, we have to show that \eqref{eq:angles1} is equivalent to the system of two inequalities $\gamma_{12}\geq0$, $\gamma_{21}\geq 0$. What is more, this equivalence is trivial to show when $\pairing{x_1,y_1} = 0$. In the sequel we thus assume that $\pairing{x_1,y_1}>0$.
	
	For $t >0$ we define:
	\begin{equation*}
		\tilde{x}_1(t) = t \,x_1, \quad \tilde{x}_2(t) = \frac{1}{t} \, x_2, \qquad g(t) = t \, \pairing{\tilde{x}_2(t) - y_2, y_1 - \tilde{x}_1(t)}.
	\end{equation*}
	The function $g$ is quadratic on $\R$. Thanks to $\pairing{x_1,y_1}>0$, one can show that $g$ is concave, and it admits two positive roots: $0< t_1 < t_2$. For each $k\in \{1,2\}$ we define two vectors,
	\begin{equation}
		\label{eq:vkwk}
		v_k= \tilde{x}_2(t_k)- y_2, \qquad w_k =  y_1 - \tilde{x}_1(t_k).
	\end{equation}
	We shall show that, for both $k$, $(v_k,w_k)$ are mutually orthogonal eigenvectors of $N-M$ (not necessarily normalized). Orthogonality follows from the fact that $t_k$ are the roots for $g$. The next observation is key,
	\begin{equation*}
		-y_2 \otimes y_1 +\tilde{x}_2(t) \otimes \tilde{x}_1(t) = -y_2 \otimes y_1 + x_2 \otimes x_1 = N-M
	\end{equation*}
	for any $t>0$. We exploit it to obtain,
	\begin{align*}
		(N-M)\, v_{k} &=  -\pairing{y_1,v_{k}} \, y_2  + \pairing{\tilde{x}_1(t_k), v_{k}}\,\tilde{x}_2(t_k) \\
		&= -\pairing{y_1-w_{k},v_{k}} \, y_2  +\pairing{\tilde{x}_1(t_k), v_{k}}\,\tilde{x}_2(t_k)  \\ 
		&= -  \pairing{\tilde{x}_1(t_k),v_{k}} \, y_2 +  \pairing{\tilde{x}_1(t_k), v_{k}}\,\tilde{x}_2(t_k)  \\
		&=  \pairing{\tilde{x}_1(t_k),v_{k}}\, ( \tilde{x}_2(t_k) - y_2) = \pairing{\tilde{x}_1(t_k),v_{k}} \, v_{k}.
	\end{align*}
	Similarly, one shows that $(N-M)\, w_{k} =  \pairing{-y_2,w_{k}} \, w_{k}$. The corresponding eigenvalues are  $\lambda_{v_k} =  \pairing{\tilde{x}_1(t_k),v_{k}}$ and $\lambda_{w_k} =  \pairing{-y_2,w_{k}}$. Next, we assess which of the four vectors \eqref{eq:vkwk}  are parallel to $b$. To that aim we  compare the signs; recall that $\lambda_a<0$, $\lambda_b >0$. We compute the derivative of $g$ at its roots,
	\begin{align*}
		g'(t_k) &= 	\pairing{-y_2, y_1 - t_k x_1} +  \pairing{x_2- t_k y_2,- x_1} \\
		&= \pairing{-y_2, w_k} -  \langle v_k, \tilde{x}_1(t_k) \rangle  = \lambda_{w_k} - \lambda_{v_k}.
	\end{align*}
	Due to the concavity of the quadratic function $g$, it must satisfy $g'(t_1) >0$ and $g'(t_2)<0$. We conclude that  $\lambda_{v_1} < \lambda_{w_1}$ and $\lambda_{v_2} > \lambda_{w_2}$. As a result, $v_2, w_1$ must be the eigenvectors that are parallel to $b$. One can easily check that the three vectors also have the same orientations. To sum up, we have
	\begin{equation}
		\label{eq:ne}
		b= \frac{v_{2}}{\abs{v_{2}}}=\frac{w_{1}}{\abs{w_{1}}}   =  \frac{1}{\abs{v_{2}}} \big( \tilde{x}_2(t_2)- y_2 \big) = \frac{1}{\abs{w_{1}}}   \big(y_1 - \tilde{x}_1(t_1) \big).
	\end{equation}
	
	We are ready to prove our assertion. Since $\langle b, y_{1}-y_2 \rangle >0$ due to \eqref{eq:ny1}, we deduce that
	  $\mathrm{sgn} (\gamma_{12}) = \mathrm{sgn}  (\pairing{b,y_1-x_1})$ and $\mathrm{sgn}  (\gamma_{21}) = \mathrm{sgn}  (\pairing{b,x_2-y_2})$. Defining the two functions:
	\begin{align}
		\label{eq:f1f2}
		&f_1(t):=\abs{v_2} \pairing{b,y_1 - t \,x_1} =  \pairing{\tilde{x}_2(t_2)- y_2,y_1 - \tilde{x}_1(t)},\\
		&f_2(t):=\abs{w_1} \pairing{\tfrac{1}{t}\,x_2 - y_2,b} = \pairing{\tilde{x}_2(t) - y_2,y_1 - \tilde{x}_1(t_1)}
	\end{align}
	we see that $\mathrm{sgn}  (\gamma_{12}) = \mathrm{sgn} (f_1(1))$, and $\mathrm{sgn}  (\gamma_{21}) = \mathrm{sgn} (f_2(1))$.
	Due to \eqref{eq:ny1}, the function $f_1$  is strictly decreasing, and $f_2$ is strictly increasing on $(0,\infty)$. The alternative formulas for $f_1,f_2$ given above follow by \eqref{eq:ne}. They provide the equalities $f_1(t_2) = f_2(t_1) = 0$ since $g(t_1) = g(t_2) =0$.
	
	Let us now assume that the inequality \eqref{eq:angles1} is satisfied or, equivalently,  $g(1) \geq 0$. By the properties of $g$, there holds $t_1 \leq 1 \leq t_2$. Since $f_1$ is decreasing, we have $f_1(1) \geq f_1(t_2) =  0$. Similarly, because $f_2$ is increasing, $f_2(1) \geq f_2(t_1)  =0$. This gives $\gamma_{12} \geq 0$ and $\gamma_{21} \geq 0$.
	
	Contrarily, assume that  \eqref{eq:angles1} does not hold, which gives $g(1) <0$. Then, either $1 <t_1 <t_2$ or $t_1<t_2 < 1$. In the first case, we have $f_2(1)< f_2(t_1)  = 0$, which  yields $\gamma_{21}<0$. In the second case $f_1(1)<f_1(t_2)  = 0$,  and thus $\gamma_{12}<0$ by the same token. The proof is complete.
\end{proof}

\smallskip

\begin{proof}[Proof of Lemma \ref{lem:u}]	
	Let us observe that  $h:\R \to \R$ is $C^1$ and $2\pi$-periodic  on $\R$. We thus immediately infer that $u$ is $C^1$ on $\R^2 \backslash \{z_0\}$. However, thanks to the factor $r^2$ in the definition of $\upsilon$, it can be showed that $\nabla u$ is also continuous at $z_0$ with $\nabla u(z_0) = 0$, that is $u \in C^1(\R^2)$. The
	function $h$ is also piecewise $C^2$. More precisely, $h''$ has discontinuity points $2k \pi$ and $ 2k \pi + \pi/(2\alpha)$ for integer $k$ if and only if  $\alpha \neq \beta$. As a result, $u$ is not of class $C^2$ except for the case $\pairing{x_2-y_2,y_1-x_1}  = 0$, which corresponds to the condition $\alpha = \beta$ exactly. Nonetheless, the piecewise continuity of $h''$ is enough to deduce that,
	\begin{align*}
		u \in C^2(V_1  \cup V_2), \qquad &V_i = \big\{ (\varrho,\vartheta)^{-1}(r,\theta) \, : \, r>0, \  \theta\in A_i  \big\}, \\
		&A_1 = ]0, \pi/(2\alpha)[, \quad \ \ A_2 = ]\pi/(2\alpha),2\pi[.
	\end{align*}
	Moreover, on each open set $V_i$ there holds $\nabla^2 u (x) = (Q(x))^\top H_i\big( \varrho(x), \vartheta(x) \big) \, Q(x)$, where $Q(x)$ is a rotation matrix, and
	\begin{equation*}
		H_i(r,\theta) = \begin{bmatrix}
			\,\frac{\partial^2 \upsilon}{\partial r^2}  &  -\frac{1}{r^2}\frac{\partial \upsilon}{\partial \theta}+ \frac{1}{r}\frac{\partial^2 \upsilon}{\partial r \partial \theta} \,\\
			-\frac{1}{r^2}\frac{\partial \upsilon}{\partial \theta}+ \frac{1}{r}\frac{\partial^2 \upsilon}{\partial r \partial \theta} & \frac{1}{r^2}\frac{\partial^2 \upsilon}{\partial^2 \theta}  + \frac{1}{r}\frac{\partial\upsilon}{\partial r}  \,
		\end{bmatrix} = 
		\begin{bmatrix}
			\, h_i(\theta) &  \frac{1}{2} h_i'(\theta) \,\\
			\,\frac{1}{2} h_i'(\theta) & \frac{1}{2} h''_i(\theta) + h_i(\theta) \,
		\end{bmatrix}.
	\end{equation*}
	Since $\R^2\backslash (V_1 \cup V_2)$ is Lebesgue negligible and $h_i$ are cosine functions, we infer that $u \in W^{2,\infty}_{\mathrm{loc}}(\R^2)$, which establishes the first part of the assertion.
	
	To prove the second part, it is enough that we check that for $i=1,2$ the eigenvalues 
	of $H_i(r,\theta) = H_i(\theta)$ remain in the regime $[-1,1]$ if and only if $\pairing{x_2-y_2,y_1-x_1} \leq 0$. Starting from $i = 1$, we obtain
		\begin{equation*}
		H_1(\theta) = 
		\begin{bmatrix}
			\, \cos(2\alpha \theta) &  \alpha  \sin(2\alpha \theta) \\
			\, \alpha \sin(2\alpha \theta)  & (1- 2\alpha^2) \cos\big(2\alpha \theta\big) \,
		\end{bmatrix},
	\end{equation*}
	and, after using the Pythagorean trigonometric identity, formulas for the eigenvalues  $\lambda_-, \lambda_+$  follow,
	\begin{align*}
		\lambda_{\pm} (\theta) 
		=  (1-\alpha^2) \cos(2\alpha\theta) \pm \alpha  \sqrt{1- (1-\alpha^2)  \cos^2(2\alpha \theta)  }.
	\end{align*}
	Assume first that $\pairing{x_2-y_2,y_1-x_1}  > 0$, which gives $\alpha > 1$. Then, clearly $\lambda_-(0) = 1- 2\alpha^2 <-1$.
	It remains to check the case when $\pairing{x_2-y_2,y_1-x_1}  \leq 0$, for which $\alpha,\beta \leq 1$.  Thanks to elementary computations we get the estimate,
	\begin{align*}
		\Big(\pm1 - &(1-\alpha^2)  \cos(2\alpha \theta) \Big)^2 \\
		=& \, (1-\alpha^2) \big(1 \mp \cos(2\alpha \theta) \big)^2 +\alpha^2 \big(1- (1-\alpha^2)  \cos^2(2\alpha \theta) \big)\\
		\geq &\,  \Big( \alpha  \sqrt{1- (1-\alpha^2)  \cos^2(2\alpha \theta)}\Big)^2,
	\end{align*}
	where we acknowledged that $\alpha \leq 1$. 
	Since the term $(1-\alpha^2)  \cos(2\alpha \theta)$ ranges in $[-1,1]$, from the estimate above we can deduce that indeed $\lambda_{\pm}(\theta)\in [-1,1]$. Handling the matrix $H_2(\theta)$ amounts to replacing $\alpha$ with $\beta$. However, since $\beta\leq 1$ as well, the same reasoning stands. 
\end{proof}

\smallskip

\bibliographystyle{plain}

\smallskip

{\footnotesize
\noindent
Karol Bo{\l}botowski:\\
Lagrange Mathematics and Computing Research Center\\
103 rue de Grenelle, Paris 75007 - FRANCE\\
and\\
Faculty of Mathematics, Informatics and Mechanics, University of Warsaw\\
2 Banacha Street, 02-097 Warsaw - POLAND\\
{\tt k.bolbotowski@mimuw.edu.pl}

\smallskip 
\noindent
	Guy Bouchitt\'e:\\
	Laboratoire IMATH, Universit\'e de Toulon\\
	BP 20132, 83957 La Garde Cedex - FRANCE\\
		{\tt bouchitte@univ-tln.fr}
}

\end{document}